\documentclass[11pt,reqno]{amsart}
\usepackage{amsmath, amsthm, amsfonts, amssymb, color}
\usepackage{mathrsfs, bbm}

\allowdisplaybreaks

\newtheorem{thm}{Theorem}[section]
\newtheorem{cor}[thm]{Corollary}
\newtheorem{lem}[thm]{Lemma}
\newtheorem{remark}[thm]{Remark}
\newtheorem{prp}[thm]{Proposition}

\newtheorem{exa}[thm]{Example}
\theoremstyle{definition}
\newtheorem{defn}[thm]{Definition}
\newcommand{\scr}[1]{\mathscr #1}
\definecolor{wco}{rgb}{0.5,0.2,0.3}

\numberwithin{equation}{section}

\def\1{{\mathbbm 1}}

\def\R{\mathbb R}

\def\E{\mathbb E}

\def\sL{\mathcal L}
\def\L{\mathcal L}
\def\S{\mathcal S}

\def\F{\mathcal F}

\def\wh{\widehat}

\def\<{\langle}
\def\>{\rangle}

\def\aa{\alpha}

\def\D{\scr D}

\def\pf{\noindent{\bf Proof.} }

 \def\beq{\begin{equation}}

 \def\P{\mathbb P} 
  \def\ee{\varepsilon}

\def\sX{\mathcal{X}}

 \makeatletter
\@namedef{subjclassname@2020}{%
  \textup{2020} Mathematics Subject Classification}
\makeatother

\begin{document}
\bibliographystyle{plain}

\title{ Boundary Harnack principle for diffusion with jumps  on  metric measure spaces }

\author{Zhen-Qing Chen}
\address{Department of Mathematics, University of Washington, Seattle, WA 98195, USA}
\email{zqchen@uw.edu}
\thanks{}
\author{Jie-Ming Wang}
\address{School of Mathematics and Statistics,  Beijing Institute of Technology,
Beijing 100081, P. R. China}
\curraddr{}
\email{wangjm@bit.edu.cn}
\thanks{}

\subjclass[2020]{31B25, 47G20, 60J45, 60J76}

\keywords{Boundary Harnack principle, harmonic function, diffusion with jumps, exit distribution, Green function}

\date{}

\begin{abstract}
A non-scale invariant BHP  on any open set is obtained for a   large
class of discontinuous Hunt processes on metric measure spaces that are in weak duality with another Hunt process under
 suitable conditions
  in terms of  estimates of exit distributions   from open sets, and bounds on Green functions
  and  the comparability of the  jump density function.
  These conditions are easy to verify in concrete cases.
 Several  examples are given to illustrate the main results and the new contributions of this paper.
  Our result in particular establishes the BHP on any open set  for
 any Hunt process associated with a regular symmetric Dirichlet form
   on a metric measure space having both the strongly local term and  the pure-jump term that admits a two-sided  heat kernel estimates of the mixture of (sub-)Gaussian and stable-like form, as well as for a wide class of non-symmetric diffusion processes with jumps on  $\R^d.$
\end{abstract}

\maketitle

\section{Introduction}
\subsection{Background and motivation}

Let $\sL$ be an operator  on a locally compact separable  metric space $(\sX,d)$ that generates a Hunt process $X$.
  For each $x\in \sX$ and $r>0$, denote by  $B(x,r):=\{y\in \mathcal{X}:\,d(x,y)<r\}$   the open ball in $\sX$ centered at $x$ with radius $r$.
  Here and below, we use $:=$ as a way of definition.
  We say the boundary Harnack principle (BHP in abbreviation)  for $\sL$  (or, equivalently, for the Hunt process $X$)   holds  on an open subset $D\subset\sX$ if   for any $z\in\partial D$ and $r\in (0, R)$ for some $R>0$, there is a constant  $C=C(z, r, D)>1$   so that
\begin{equation}\label{e:1.0}
h_1(x)h_2(y)\leq C\,h_1(y)h_2(x)\quad\hbox{ for every }x,y\in D\cap B(z,r)
\end{equation}
for any non-negative functions $h_1$ and $h_2$  on $\sX$ that are $\sL$-harmonic in $D\cap B(z,2r)$ and vanish on $B(z,2r)\setminus D$ in a suitable sense. If there is  some $R \in (0, \infty]$ so that the above holds for some constant $C>1$ uniformly in  $z \in \partial D$ and in $r\in (0, R)$,  we say the scale invariant BHP holds for $X$ and $\sL$ with radius $R$ on $D$. If the constant $C>1$ can be further taken to be independent of the open subset $D\subset \sX$ , we say the uniform BHP holds for $X$ and $\sL$ with radius $R$.  Whether BHP holds or not is a fundamental problem both in  analysis and in probability theory.
 While the scale-invariant and uniform BHPs have the most important consequences and applications,
the non-scale invariant BHP   remains highly useful because  it ensures that two positive harmonic functions vanishing
on a portion of the boundary decay at the same rate.
For example, the non-scale invariant BHP is used in the study of Martin boundary  for
 uniformly elliptic operators in non-Lipschitz domains in \cite{BB3}.
  The non-scale invariant BHP in subdomains of $\R^3$ with the trace of three dimensional Brownian motion
 removed is used to study  analyticity of three dimensional Brownian intersection exponents
 in Brownian slit domain  \cite{GLLLL}. In both examples,  the scale-invariant BHP  may fail.

The scale invariant BHP  for the Laplace operator $\Delta$ (or equivalently, for Brownian motion)
 in Lipschitz domains in $\R^d$  was  independently established  by
  Ancona \cite{A},  Dahlberg \cite{Da} and Wu  \cite{Wu}.
  This BHP  result
   was    later extended
 to  non-tangential accessible (NTA) domains by Jerison and Kenig \cite{JK}, to uniform domains by Aikawa \cite{Ai}
and to more general  elliptic operators in Lipschitz domains  by Caffarelli, Fabes, Mortola and Salsa
 \cite{CFMS} and Fabes, Garofalo, Marin-Malave
and Salsa \cite{FGMS}.
 Aikawa \cite{Ai2} further showed for Laplace operator $\Delta$ that the scale-invariant
BHP holds on $D\subset \R^d$ if and only if $D$ is an inner uniform domain.
Bass and Burdzy \cite{BB1} developed a probabilistic approach  to give an alternative proof of non-scale invariant   BHP in Lipschitz domains
for Brownian motion.
This probabilistic method was further developed   to obtain a non-scale invariant BHP for uniformly elliptic  second order differential
operators   in H\"older domains and twisted H\"older domains in \cite{BB2, BBB, BB4}.
 In particular, it is shown in  \cite{BB2} that
 the   non-scale invariant   BHP holds for any twisted H\"older domain of order $\alpha$ with $\alpha \in (1/2, 1]$, while
 for each $\alpha \in (0, 1/2)$, there is some   twisted H\"older domain of order $\alpha$ on which
 the   non-scale invariant   BHP fails.
There is  now
 a very substantial literature on BHP for differential operators.
 For diffusions on geodesic spaces,  the scale invariant BHP on inner uniform domains  has
  been established in \cite{GSC, LS, Lie} under a  two-sided  Gaussian or sub-Gaussian heat kernel bounds assumption.
The results of \cite{BM} combining with \cite{BCM}  show that  for symmetric diffusions on geodesic spaces,  the scale invariant BHP on inner uniform domains holds under the assumption of scale invariant elliptic Harnack inequality.
 Recently,   this result
  has been further extended to the case that the underlying space may not  be geodesic in \cite{Cha}.

\smallskip

The study of BHP for non-local operators (or equivalently, discontinuous Markov processes)
started relatively recent and many  progresses have been made in recent years.
  Bogdan \cite{Bo} showed the scale invariant BHP holds for $\Delta^{\alpha/2}:=-(-\Delta)^{\alpha/2}$
(or equivalently, for rotationally symmetric $\alpha$-stable processes)
 with $\alpha\in (0,2)$ in bounded Lipschitz domains.
This result was later extended to $\kappa$-fat open sets by Song and Wu \cite{SW} and to the uniform BHP   in arbitrary open sets
by Bogdan, Kulczycki and Kwasnicki in \cite{BKK}.
The scale-invariant BHP  for   pure jump subordinate Brownian motions in
 $\kappa$-fat  (or corkscrew)
open sets and the uniform BHP for
a  class of pure jump symmetric L\'evy process in open sets have been shown in  \cite{KSV, KSV5}.
BHP has also been established for some non-homogenous non-local operators, see  \cite{BBC, KS, CRY}. A scale invariant BHP has been shown to hold on open sets for $s$-stable-subordination with $s\in (0, 1)$ of symmetric diffusion processes that  have
two-sided sub-Gaussian heat kernel estimates on  Sierpinski carpets and gaskets  in \cite{Sto, KK}
and on Ahlfors $n$-regular metric measure spaces   in \cite[Example 5.7]{BKK}.
  A non-scale-invariant BHP for stable-like operator in non-divergence form is given in \cite{ROS}.
Very recently, Cao and Chen \cite{CC} established a uniform BHP on any open sets for a large class of non-local operators on metric measure spaces  under a jump measure comparability and tail estimate condition, and an upper bound condition on the distribution function for the exit times from balls. These conditions are satisfied by any  non-local operator  that admits a two-sided mixed stable-like heat kernel bounds when  the underlying metric measure spaces have  volume doubling and reverse volume doubling properties.
The non-local operators studied in \cite{CC} can be   symmetric as well as  non-symmetric.

 Despite the significant progresses on pure non-local operators, the study on the BHP
 for discontinuous processes with Gaussian components
   (or equivalently,  for non-local operators with diffusive components)
 is still limited.
For discontinuous processes with Gaussian components,
   the scale invariant BHP   is  shown to hold on  $C^{1,1}$-smooth open sets for
subordinate Brownian motion with Gaussian components in \cite{CKSV,  KSV2},
 and for a large class of non-symmetric diffusions with jumps in $\R^d$ in \cite{CW1}.
  In our  recent paper \cite{CW2},
    it is shown that the scale invariant BHP   for subordinate Brownian motion with Gaussian components on $\R^d$    holds on Lipschitz domains satisfying an interior cone condition with common angle $\theta \in (\cos^{-1}(1/\sqrt{d}),\pi)$ but fails on truncated circular cones with angle $\theta \in (0, \cos^{-1}(1/\sqrt{d})]$.
   This is in  stark
    contrast with the case of differential operators  such as $\Delta$ and with the case of pure non-local operators
  such as $\Delta^{\alpha/2}$.
  However, non-scale invariant BHP for discontinuous processes with Gaussian components can hold on much more general open sets.
 Bogdan,  Kumagai and  Kwasnicki \cite{BKK2} established the non-scale invariant BHP  for a discontinuous
  Feller process $X$ having strong Feller property (called doubly Feller process)
    possibly having diffusive components  in weak duality with another  doubly Feller process
on open sets on metric measure  spaces, under  the condition that every semipolar set of $X$ is  polar,
 a comparability condition  of  the jump kernel and a Urysohn-type  cut-off function condition on the domains of the generator of
the  doubly Feller processes and their duals,  and a uniform off-diagonal boundedness on   Green functions;
see \cite[Assumptions A-D]{BKK2}.
 As a particular case,  the non-scale invariant BHP for $\Delta+\Delta^{\alpha/2}$ with $\alpha\in (0, 2)$ is shown to hold on any open set in $\R^d.$

 However, it is unknown whether the non-scale invariant BHP holds
   for more general discontinuous Hunt processes with diffusive part in open sets on metric measure spaces.
  The Urysohn-type  cut-off function condition  imposed in \cite[Assumption B]{BKK2}
    on the domains of the generator of the  doubly Feller processes and their duals
    played an essential role  in the approach there.
    This condition requires strong regularity on the generator of the  Feller  processes and their duals.
  For example, in the Euclidean case,   a large class of symmetric diffusion with jumps associated  with  generators
  having measurable coefficients  and non-symmetric diffusion with jumps with less regularity coefficients
  are excluded  from  \cite{BKK2}.
  The goal of this paper is to establish
    non-scale invariant BHP for  a more general class of discontinuous Hunt processes
  including discontinuous processes with diffusive components on metric measure spaces
 under a set of conditions that are strictly weaker than that of \cite{BKK2}.
In particular, we do not assume the Urysohn-type  cut-off function condition for the generator of the Hunt process
nor for its dual Hunt process.

\subsection{Main results}

In the following, we state the settings and our main results in detail.
 Let $(\sX,  d)$ be a locally compact separable metric space.
Let $X=(X_t, \zeta, \mathcal{F}_t, \P_x)$ be an  irreducible Hunt  process taking values in  $\sX$.
 The random variable $\zeta$ is the lifetime of the process $X$ so that $X_t=\partial$ for $t\geq \zeta,$ where $\partial$ is a cemetery  added to $\sX$ as a one-point compactification.
 Here we say  the Hunt process $X$ is  irreducible on $\sX$ if for any non-empty  open set $U\subset \sX,$
$$\P_x (T_U<\infty)>0 \quad \mbox {for every }  x\in \sX,$$
where $T_U:=\inf\{t>0: X_t\in U\}.$

 It is known that any Hunt process admits a L\'evy system that describes how the process jumps.
Suppose that the Hunt process $X$ has the L\'evy system $(J(x, dy), dt).$ That is, for each $x\in \sX, J(x, dy)$ is a measure on $\sX_\partial=\sX\cup \{\partial\}$ so that for any  nonnegative function $f$ on $\R_+\times
\sX_\partial\times \sX_\partial$ vanishing along the diagonal of $\sX_\partial \times \sX_\partial$, for any stopping time $T$
with respect to the minimal admissible  augmented  filtration generated by $X$ and $x\in \sX$,
$$
\E_x  \Big[ \sum_{s\leq T}f(s, X_{s-}, X_s); X_{s-}\neq X_s \Big]
 =  \E_x \Big[ \int_0^T\int_{ \sX_{\partial}} f(s,X_s,y) J(X_{s-}, dy)\,ds \Big].
$$

\begin{defn}\label{D:2.1} \rm
Suppose $U$ is an open subset of $\sX.$ A real-valued function $u$ defined on $\sX$ is said to be
\begin{itemize}
\item[\rm (i)]
 {\it  harmonic in   $U$ with respect to $X$}
if for every open set $B$  whose closure is a compact subset of $U,$
   $\E_x |u(X_{\tau_B})|<\infty$ and
 $u(x)=\E_x u(X_{\tau_B})$ for  each  $x\in B.$
  In particular, we say $u$  is   regular harmonic in   $U$ with respect to $X$
if    $\E_x |u(X_{\tau_U})|<\infty$ and
$u(x)=\E_x u(X_{\tau_U})$  for each  $x\in U.$

\item[\rm (ii)]
 {\it $\alpha$-excessive with respect to $X^U$ with $\alpha\geq 0$}
if $u$ is non-negative,
$ u(x)\geq e^{-\alpha t}\E_x u(X^U_t)$ and $u(x)=\lim_{t\downarrow 0} e^{-\alpha t}\E_x  \left[ u(X^U_t)\right]$
 for every $t>0$ and   $x\in U.$
 If $\alpha=0,$ we say $u$ is an excessive function with respect to $X^U.$

\end{itemize}
\end{defn}

\smallskip

  We refer the reader to \cite{C} for the equivalence of the analytic and probabilistic notions of the harmonicity for symmetric Hunt processes on
 metric measure spaces.  For an open subset $U$ of $\sX,$ we use $\overline U$ to denote its closure.
 Let $U$ be an open subset of $\sX.$
 Denote by $X^U$ the subprocess of $X$ killed upon exiting $U.$
 The  process $X^U$ is a  Hunt process on $U$; see, for example,  \cite[Exercise 3.3.7]{CF}.
 We   consider the following assumptions on the  Hunt process $X$ in this paper.

 \begin{enumerate}

\item[{\bf (A1)}]
 Hunt's hypothesis holds;  that is, every semi-polar set of $X$ is polar.
 Suppose further that for each ball $B$  so that  $\overline B\subsetneq \sX$,
 there exists a Radon measure $m_B$ on $B$ such that  the process $X^B$ is   in weak  duality
  to another
   Hunt process $\wh X^B$ with respect to  the Radon measure $m_B$.
   That is, for each  nonnegative Borel  functions $f, g$ in $\sX$ and for each $\alpha>0,$
\begin{equation}\label{e:1.1'}
\int_B  G^{\alpha, B} f(x)\cdot g(x) m_B(dx)=\int_B  f(x) \wh G^{\alpha, B} g(x) m_B(dx),
\end{equation}
where $G^{\alpha, B} f(x):=\E_x \int_0^\infty e^{-\alpha t} f(X^B_t)\,dt$ and $\wh G^{\alpha, B} f(x):=\E_x \int_0^\infty e^{-\alpha t} f(\wh X^B_t)\,dt.$  Moreover, for each $x\in B$ and $\alpha>0,$  $G^{\alpha, B}(x, \cdot)$ and $\wh G^{\alpha, B}(x, \cdot)$ are absolutely continuous with respect to $m_B(dy).$

\end{enumerate}

 Throughout this paper, we always use $m_B$ to denote the reference measure in assumption (A1).
 Note that in this paper, we only assume for each ball $B$ so that  $\overline B\subsetneq \sX$,
$X^B$ has a weak dual; we do not assume $X$ itself has a weak dual.
We mention that if
 there exists a Radon measure $m$ on $\sX$ such that
 the process $X$ is   in weak  duality
  to another Hunt process $\wh X$ with respect to   $m,$ then by \cite[Theorem 13.25 and Remark 13.26]{ChungW}, for each  open subset $U$ with
  $\overline U\subsetneq \sX,$
   the semigroups  of $X^U$ and  $\wh X^U$ are in weak duality relative to $m$ on $U.$
  Hence \eqref{e:1.1'} holds.

Let $B$ be a ball  so that $\overline B\subsetneq \sX$, and $U$ an open subset of $B$.
Denote by $\wh X^{B, U}$ the subprocess of $\wh X^B$  killed upon leaving $U.$
Observe that $X^U$
 can be viewed as the subprocess of $X^B$ killed upon leaving $U.$
 By \cite[Theorem 13.25 and Remark 13.26]{ChungW},
   the semigroups  of $X^U$ and  $\wh X^{B, U}$ are in weak duality relative to $m_B$ on $U.$
   That is, for each  nonnegative Borel  functions $f, g$ in $U$ and for each $\alpha>0,$
$$
\int_U  G^{\alpha, B}_U f(x)\cdot g(x) m_B(dx)=\int_U  f(x) \wh G^{\alpha, B}_U g(x) m_B(dx),
$$
where $G^{\alpha, B}_U f(x)  := G^{\alpha, U} f(x)
:=\E_x \int_0^\infty e^{-\alpha t} f(X^U_t)\,dt$ and $\wh G^{\alpha, B}_U f(x):=\E_x \int_0^\infty e^{-\alpha t} f(\wh X^{B, U}_t)\,dt.$
Note that for each $x\in U$ and $\alpha>0,$  $G^{\alpha, B}_U(x, \cdot)$ and $\wh G^{\alpha, B}_U(x, \cdot)$ are absolutely continuous with respect to $m_B(dy)$  restricted on $U.$
  For   each $\alpha>0$,
 there exists a unique $\alpha$-potential kernel $G^{\alpha, B}_U(x, y)$ defined on $U\times U$ such that for  each nonnegative Borel  function $f$ on $U,$
\begin{equation}\label{e:G1}
G^{\alpha, B}_U f(x)=\int_U G^{\alpha, B}_U(x, y) f(y) m_B(dy), \quad \wh G^{\alpha, B}_U f(x)=\int_U G^{\alpha, B}_U(y, x) f(y) m_B(dy),
\end{equation}
and  $x\mapsto G^{\alpha, B}_U(x, y)$ is $\alpha$-excessive with respect to $X^U,$  $y\mapsto G^{\alpha, B}_U(x, y)$ is $\alpha$-excessive with respect to $\wh X^{B, U};$ see \cite[Theorem 13.2]{ChungW},   or \cite[Theorem 1]{KW},  or the remarks after \cite[Proposition VI.1.3]{BG}.
Since $G^{\alpha, B}_U(x, y)$ is non-increasing in $\alpha>0$ for $(x, y)\in U\times U,$ we define the {\it potential kernel}
$G^B_U(x, y):=\lim_{\alpha\rightarrow 0}G^{\alpha, B}_U(x, y)$ for $(x, y)\in U\times U,$ which exists  uniquely.
 We set $G^B_U(x, y)=0$ for $(x, y)\notin U\times U.$
By monotone convergence theorem, for each nonnegative Borel function $f$ on $U,$
$$G^B_U f(x)=\int_U G^B_U(x, y) f(y) m_B(dy)=\E_x \int_0^{\tau_U} f(X_t) dt.$$
Since the increasing limit of excessive function is also an excessive function, for each  $y\in U,$ $x\mapsto G^B_U(x, y)$ is excessive with respect to $X^U,$ and for each  $x\in U,$  $y\mapsto G^B_U(x, y)$ is excessive with respect to $\wh X^{B, U}$.

\begin{defn}\label{De:1}
Let $B$ be a ball in $\sX$ with  $\overline B\subsetneq \sX.$
 For an open subset $U\subset B,$
  we say $G^B_U(x, y)$ is
the Green function of $X^B$ in $U$ (or equivalently $X^U$)  with respect to the measure $m_B$ if it is  not identically infinite and is  the potential kernel of $X^B$ in $U$. That is, the following conditions hold:
\begin{enumerate}
 \item[\rm (i)]
 For any Borel bounded function  $f\geq 0$ on $U$,
$$
\E_x \int_0^{\tau_U} f(X_s) ds=\E_x \int_0^{\tau_U} f(X^B_s) ds = \int_U G^B_U (x, y) f(y) m_B(dy), \quad \hbox{for} \quad x\in U.
$$

\item[\rm (ii)]
For each fixed $y\in U,$ $x\mapsto G^B_U(x, y)$ is excessive with respect to $X^B$ in $U.$ For each fixed $x\in U,$  $y\mapsto G^B_U(x, y)$ is excessive with respect to $\wh X^{B, U}.$

 \end{enumerate}

  When   $U=B$,  we denote the Green function $G^B_U(x, y)$ by $G^B(x, y)$.

\end{defn}

Let $U$ be an open subset of $\sX$. For each $x\in \sX$ and $R>0,$ define $B_U(x, R):=U\cap B(x, R).$
For each ball $B=B(x_0, R_0)$  centered at $x_0$  with radius $R_0$
  and $\lambda >0,$  we use $\lambda B$ to denote the concentric ball $B(x_0, \lambda R_0).$
  For each $x_0\in \sX$ and $0<r<R<\infty,$  let
  $$
  A(x_0, r, R):=B(x_0, R) \setminus \overline{B(x_0, r)}.
  $$
  For each ball $B\subset \sX$ and  each open subset $U$ of $B,$ denote by $\tau_U$ and $\wh \tau^B_U$ the first exiting times of $X$ (equivalently, of $X^B$) and the dual process $\wh X^B$ from $U$ respectively.

\begin{defn}
\begin{enumerate}
 \item[\rm (i)]
 We say condition   ${\bf (ED)} _{\leq, x_0, R}$ holds
  if for the  ball $B=B(x_0, R)$ with $\overline B \subsetneq \sX$,
  there is a constant $C_1=C_1(x_0, R)>0$  so that   for
      any  open subset $ U\subset 2^{-1}B$ and every $  x\in B_U(x_0,  R/64)$,
\begin{equation}\label{e:ED1}
\P_x  \big( X^{B}_{\tau_{B_U(x_0, R/32)}} \in U \big)
\leq   C_1\E_x[\tau_{B_U(x_0,  R/32)}]  .
 \end{equation}

  \item[\rm (ii)]
We say condition  ${\bf {(\wh{ED})}} _{\leq, x_0, R}$ holds
 if for the  ball $B=B(x_0, R)$ with $\overline B \subsetneq \sX$,
there is a constant $C_2=C_2(x_0, R)>0$  so that  for
 any  open subset $  U\subset 2^{-1}B$ and
every $x\in  U \cap A(x_0, 3R/8, R/2)$,
\begin{equation}\label{e:ED2}
\P_x  \Big( \wh X^{B}_{\wh\tau^B_{U\cap  A(x_0, R/4, R/2)}}  \in U \Big)
\leq   C_2\E_x \left[ \wh\tau^{B}_{U\cap  A(x_0, R/4, R/2)} \right] .
 \end{equation}

\item[\rm (iii)] We say condition  ${\bf (ED)}^s _{\leq, R}$
  (resp. ${\bf (\wh {ED} ) }^s_{\leq, R}$) holds
 if  \eqref{e:ED1} holds  with $C_1=C_1 (R)$ (resp.  \eqref{e:ED2} holds with constant $C_2=C_2(R)$)
for any ball $B$ with radius $R$  and  $\overline B \subsetneq \sX$.

 \end{enumerate}

\end{defn}

\begin{remark}\label{R:1.1}\rm

In the definition of ${\bf (ED)} _{\leq, x_0, R}$, as the open subset $U\subset 2^{-1}B,$
 one can write $X$ in place of $X^B$ in \eqref{e:ED1}. That is,
   \eqref{e:ED1} is equivalent to that
\begin{equation}\label{e:1.5}
\P_x  \big( X_{\tau_{B_U(x_0, R/32)}} \in U \big)
\leq   C_1\E_x[\tau_{B_U(x_0, R/32)}] \quad\hbox{ for every }  x\in B_U(x_0, R/64).
 \end{equation}
If there exists a Radon measure $m$ on $\sX$ such that
 the process $X$ is   in weak  duality
  to another Hunt process $\wh X$ with respect to   $m,$ then
  a  similar remark applies to ${\bf {(\wh{ED})}} _{\leq, x_0, R}$ as well.

\end{remark}

We consider the following assumptions (A2)-(A5). Let $R_0>0.$

\begin{enumerate}

\item[{\bf (A2)}]
 For each  $R\in (0, R_0)$ and $x_0\in\sX$ with $\overline{B(x_0, R)}\subsetneq \sX,$
 ${\bf (ED)} _{\leq, x_0, R}$
  and ${\bf {(\wh{ED})}} _{\leq, x_0, R}$ hold.

\item[{\bf (A3)}]
For each ball  $B=B(x_0, R)$ centered at $x_0\in\sX$ with radius $R\in (0, R_0)$ so that $ \overline{B}\subsetneq \sX,$
\begin{equation}\label{e:G}
\sup_{B(x_0, R/16) \times  A(x_0, R/8, R/2) }
 G^{B}(x, y)  <\infty.
\end{equation}

\item[{\bf   (A4)}]
 For each ball  $B=B(x_0, R)$ centered at $x_0\in\sX$ with radius $R\in (0, R_0)$ so that $ \overline{B}\subsetneq \sX$,
  there exists $ c_1=c_1(x_0, R)>1$  such that  for any $x\in   2^{-1}B, $
\begin{equation}\label{e:J1}
c_1^{-1}J(x_0, dy)\leq J(x, dy) \leq c_1J(x_0, dy)   \quad \hbox{on }  B^c.
\end{equation}
Furthermore,
 there is a strictly positive function  $(x, y)\mapsto J^{B}(x, y)$ in $B \times B$
 such that  $J^{B}(x, y)$ is finite for $x, y\in B$ with $x\neq y$ and
 $$
  J(x, dy)=J^{B}(x, y)\,m_{B}(dy) \quad  \mbox{on} \quad B\times B.
  $$
 Moreover,
  \begin{equation}\label{e:J3}
  0<\inf_{y\in   B(x_0, R/2)\setminus B(x_0, R/16)} J^{B}(x_0, y)\leq \sup_{y\in  B(x_0, R/2)\setminus B(x_0, R/16)  } J^{B}(x_0, y)<\infty.
  \end{equation}
   For each $ 0<p<q<1,$
  there exists $c_2=c_2(x_0, p, q, R)>1$  such that  for any $x\in   pB$ and $y\in B\setminus (qB),$
\begin{equation}\label{e:J2}
c_2^{-1}J^{B}(x_0, y)\leq J^{B}(x, y) \leq c_2J^{B}(x_0, y).
\end{equation}

\item[{\bf (A5)}]
For each  $R\in (0, R_0),$   there exists $C=C(R)>0$ such that
$$
   \sup_{x\in \sX}\E_{x} \tau_{B(x, R)}\leq C.
$$

\end{enumerate}

We say  assumptions (A2'), (A3') and (A4') hold for  $R_0>0,$ if
\begin{enumerate}

\item[{\bf (A2')}] For each $R\in (0, R_0),$  ${\bf (ED)}^s _{\leq, R}$
and ${\bf (\wh {ED} ) }^s_{\leq, R}$ hold.

\item[{\bf (A3')}]  For each $R\in (0, R_0),$ the condition \eqref{e:G} in assumption (A3)  holds uniformly for $x_0\in \sX.$

\item[{\bf(A4')}]  Assumption (A4) holds and the conditions \eqref{e:J1}-\eqref{e:J2} hold uniformly for $x_0\in \sX.$

\end{enumerate}

  Assumptions (A2')-(A4') are slightly stronger than (A2)-(A4) respectively.

\begin{remark}\label{R:1.2}\rm

\begin{enumerate}

\item[(i)]
 If  $X$ is a  doubly Feller process
in weak duality with another doubly Feller process $\wh X$
and  the  Urysohn-type  cut-off function condition holds for  both $X$ and $\wh X$,
  one can choose bump functions $f$ and $g$ such that $f=1$ on $\overline {A(x_0, R/32, R/2)}$ and $f=0$ on $\sX\setminus A(x_0, R/64, R);$ and
$g=1$ on $\overline {B(x_0, R/4)}$ and $g=0$ on $\sX\setminus B(x_0, 3R/8)$.
 Then  by an  argument similar to that of Lemma 3.1 in \cite{BKK2}, \eqref{e:ED1} and \eqref{e:ED2} hold.
     Hence Assumption (A2)    is weaker than Assumption B   on  Urysohn-type  cut-off function condition for  both $X$ and $\wh X$ of
     \cite{BKK2}.
 In Proposition \ref{P1} below, we present  some sufficient conditions for (A2) that are easier to verify in concrete cases.

\item[(ii)]  Note that in Assumptions (A3)  and (A3'), we do not assume the  corresponding
condition on the Green function of the dual process  $\wh X^{B}$.
  Hence they are weaker than
   Assumption D imposed on the Green functions of
   $X^{B}$ and $\wh X^{B}$
   in \cite{BKK2}.  Assumptions   (A3) and  (A5) are mild conditions that are satisfied by many Hunt processes.

\item[(iii)]
 In Assumption (A4), we do not assume the comparability of the jump density function of the dual process
  $\wh X^B$.
The condition  \eqref{e:J3}
  in assumption (A4)
   is a mild condition  that is satisfied by many Hunt processes.
In particular, if Assumption C  of \cite{BKK2}  on  the comparability of the  jump density kernel of the dual process holds, then  the infimum part of \eqref{e:J3} holds by  \cite[(2.8)]{BKK2}  and its supremum part  holds by a similar covering argument as that of  \cite[(2.8)]{BKK2}.
Thus  Assumption  (A4) is weaker than Assumption C in \cite{BKK2}.

\item[(iv)]  In summary, our conditions (A1)-(A4), under which we establish BHP in Theorem \ref{T0} of
this paper,  are strictly weaker than Assumptions A-D of \cite{BKK2}.

\end{enumerate}
\end{remark}

  \medskip

For a Borel set $A\subset \sX$,  define $T_A:=\inf\{t>0: X_t\in A\}.$

 \begin{defn}\label{D:1} \rm
A point $y$ is said to be regular of $X$ for a Borel set $A$ if $\P^y(T_A=0)=1.$
 Denote by $A^r$ all the regular points of $A$ with respect to $X$.
\end{defn}

\smallskip

The following is the main result of this paper.

\begin{thm}\label{T0}
Suppose that assumptions (A1) and (A2)-(A4)  hold for some $R_0>0$.
Let $D$ be an  open set in $\sX$.
Then for each  $z_0\in \partial D$ and $R\in (0,  R_0/16 ) $  so that $\overline{B(z_0, 2R)}\subsetneq \sX$,
 there exists a constant
$C=C( z_0, R)>0$,
 which depends on the constants in assumptions (A2)-(A4)  for the ball $B(z_0,  2R)$ and is independent of the geometry of $D$,
   such that   for any  nonnegative
 regular harmonic functions $f$ and $g$
on $D\cap B(z_0, R)$   vanishing  on  $  (D^c)^r \cap B(z_0, R),$
\begin{equation}\label{e:1}
f(x)g(y)\leq Cf(y)g(x)  \quad \hbox{for } x, y\in D\cap B(z_0, R/32).
\end{equation}
 Furthermore, assume  that the assumptions (A1), (A2'), (A3'), (A4') and (A5) hold for  $R_0>0$.
 Then the constant in \eqref{e:1} depends only on $R,$ that is,  for each $R\in (0, R_0/16)$, the BHP \eqref{e:1} holds
  with $C=C(R)\geq 1$ uniformly for $z_0\in \partial D.$
\end{thm}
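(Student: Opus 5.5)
The plan is to follow the Bogdan--Kumagai--Kwa\'snicki strategy: show that every nonnegative regular harmonic function vanishing on $(D^c)^r\cap B(z_0,R)$ is comparable, up to a single factor, to a fixed functional evaluated at $x$. Concretely, we aim for the two-sided estimate
\begin{equation*}
c^{-1}\,\E_x[\tau_{V}]\int_{B\setminus V} f(y)\,J(z_0,dy)\;\le\; f(x)\;\le\; c\,\E_x[\tau_{V}]\int_{B\setminus V} f(y)\,J(z_0,dy)
\end{equation*}
for $x\in D\cap B(z_0,R/32)$, with $V=D\cap B(z_0,R/16)$ (or a similar scale chosen to match the annuli in (A2)) and $B=B(z_0,2R)$. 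The integral is a number independent of $x$. Dividing the estimates for $f$ and for $g$ then makes $\E_x[\tau_V]$ cancel, and \eqref{e:1} follows.

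The first step is the lower bound. Since $f$ is regular harmonic and vanishes on regular points of $D^c$, Hunt's hypothesis in (A1) lets us write $f(x)=\E_x f(X_{\tau_V})$, with no contribution from continuous exit onto $\partial D$. Dropping the exits into $B(z_0,R/16)^c$ except those made by jumps, the L\'evy system gives
\begin{equation*}
f(x)\ge \E_x\int_0^{\tau_V}\int_{B\setminus B(z_0,R/16)} f(y)J(X_s,dy)\,ds .
\end{equation*}
The jump comparability \eqref{e:J1}--\eqref{e:J2} in (A4) then replaces $J(X_s,dy)$ by $J(z_0,dy)$. For the upper bound I would split $f(x)=\E_x[f(X_{\tau_V});X_{\tau_V}\notin B(z_0,R/16)]$ into three pieces:
\begin{itemize}
\item[(a)] jumps landing far away, in $B^c$ or in the outer annulus, which (A4) again controls by $\E_x\tau_V\int f\,J(z_0,dy)$;
\item[(b)] exits landing in the middle annulus $D\cap A(z_0,R/16,R/2)$;
\item[(c)] exits by continuous (diffusive) passage through $\partial B(z_0,R/16)$ into $D$, which is the new difficulty compared with the pure-jump case.
\end{itemize}

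For (b) and (c) the plan is to bound $f$ on the annulus and then multiply by the exit probability. (ED)$_{\le}$ from (A2) gives $\P_x(X_{\tau_V}\in D)\le C_1\E_x\tau_V$, so pieces (b) and (c) are at most $C_1\E_x[\tau_V]\sup_{D\cap A(z_0,R/32,R/2)}f$. It then remains to bound $\sup_{D\cap A} f$ by $c\int_{B\setminus V}f\,J(z_0,dy)$. This is the key step and, I expect, the main obstacle, because we have no Harnack inequality. I would handle it by duality. For $w$ in the annulus $A'=D\cap A(z_0,R/4,R/2)$, write $f(w)$ by the same decomposition on $U\cap A(z_0,R/4,R/2)$. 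Then express the harmonic measure through the Green function $G^B_{U'}(w,\cdot)$ via the L\'evy system, integrated against $f\,J^B$, and use \eqref{e:G} in (A3) to bound the Green function on $B(z_0,R/16)\times A(z_0,R/8,R/2)$. The dual exit estimate $(\wh{ED})_{\le}$ controls the mass of $G^B_{U'}(\cdot,y)$ seen from the dual process near the inner boundary of the annulus, through the identity $\int G(x,y)\,m_B(dx)=\wh\E_y\wh\tau$ from \eqref{e:G1}. The density bounds \eqref{e:J3} turn $\int f(y)J^B(z_0,y)\,m_B(dy)$ over $A(z_0,R/16,R/2)$ into the desired integral. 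A self-improving (bootstrap) argument may be needed: we get $\sup_{A'}f\le \epsilon\sup_{A'}f+c\int f\,J$, with $\epsilon<1$ after enlarging the inner radius. I would first try to absorb this, with a truncation to ensure finiteness.

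Combining the pieces yields the two-sided comparability and hence \eqref{e:1}. Every constant comes only from (A2)--(A4) on $B(z_0,2R)$, since the open set $D$ enters only through the arbitrary $U$ allowed in (ED), so the constant is independent of $D$. Under (A2')--(A4') all constants are uniform in $z_0$. (A5) would be used to bound $\E_x\tau$ uniformly wherever a crude bound on the expected exit time is needed in the absorption step. This gives $C=C(R)$.
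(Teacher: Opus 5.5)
The outer structure of your plan is sound: the lower bound via the L\'evy system and (A4), far jumps via (A4), and $(\mathbf{ED})_{\le}$ on $B(z_0,2R)$ giving $\P_x(X_{\tau_V}\in D)\le C_1\E_x\tau_V$ for $x\in D_{R/32}$. \textbf{The gap is the step everything then rests on}, $\sup_{D\cap A} f\le c\int f\,J(z_0,dy)$, which is not established.

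This is a local supremum (mean-value) bound for $f$ on an annulus, at interior points of $D$ and near other parts of $\partial D$. It is exactly what Bogdan--Kumagai--Kwa\'snicki extract from Urysohn cut-off functions, and (A1)--(A4) do not supply it. Your sketch does not produce it, for three reasons.
\begin{enumerate}
\item (A3) centred at $z_0$ bounds $G^B(x,y)$ only for $x\in B(z_0,R/16)$. It therefore says nothing about $G^B_{U'}(w,\cdot)$ with $w$ itself in the annulus.
\item The L\'evy system writes only the \emph{jump} exits from $U'$ as $\int G\,J$. The diffusive exits are not of this form.
\item The absorption $\sup_{A'}f\le\epsilon\sup_{A'}f+c\int fJ$ has no $\epsilon<1$. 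From points of $A'$ near $\partial B(z_0,R/2)$, the process leaves $A'$ continuously outward with non-small probability. It lands where $f$ is not bounded by $\sup_{A'}f$: near $D\cap\partial B(z_0,R)$, $f$ is driven by its prescribed boundary values on that sphere. An integral functional like $\int f\,J(z_0,dy)$ cannot control a supremum there without a Harnack-type inequality.
\end{enumerate}
There are also two smaller issues. Your functional omits $B^c$, although piece (a) produces $\int_{B^c}f\,J(z_0,dy)$. And replacing $J(X_s,dy)$ by $J(z_0,dy)$ for $X_s\in B(z_0,R/16)$, $y\notin B(z_0,R/16)$ needs a gap between the two radii before \eqref{e:J2} applies.

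The missing idea is that the paper never bounds $f$ pointwise away from $z_0$.
\begin{itemize}
\item \textbf{Split off far exits.} It removes the far-exit part $g$, which \eqref{e:J1} handles.
\item \textbf{Green-potential representation.} It applies Proposition \ref{P:2.2}, which is where Hunt's hypothesis and transience of $X^{2B}$ enter. On $D_{7R/8}$ this writes the remaining part $\tilde h$ as $\lim_n\int G^{2B}_{D_R}(x,y)\,\mu_n(dy)$, with $\mu_n$ carried by $D_R\setminus D_{7R/8}$, plus a jump term. This absorbs exactly the diffusive exits you could not control.
\item \textbf{Comparison of the Green kernel.} It then suffices to compare the Green kernel for each fixed pole $y$ in the outer annulus. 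Lemma \ref{L:3.2} gives $G(x,y)\asymp \E_x\tau_{D_{R/32}}\int_{D_{R/2}\setminus D_{R/16}}G(u,y)\,m_B(du)$.
\item \textbf{Where duality enters.} The duality, (A3) and $(\wh{\mathbf{ED}})$ argument you allude to is used inside Lemma \ref{L:3.2}. There it bounds $\sup_{u\in D_{R/16}}G(u,y)$ by the dual expected exit time $\E_y[\wh\tau^B_{D\cap A(z_0,R/4,R/2)}]$. That quantity is itself at most $\int_{D_{R/2}\setminus D_{R/16}} G(w,y)\,m_B(dw)$, an $L^1$ quantity in the pole variable rather than a supremum of $f$.
\end{itemize}
Integrating against $\mu_n$ gives the common factor $\E_x\tau$ for every piece, and the BHP follows by division with no local supremum estimate.
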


 \begin{remark}\rm

Hunt's hypothesis in assumption (A1) is  not needed in Theorem \ref{T0} if we consider the  BHP  \eqref{e:1} only  for those
nonnegative  regular harmonic functions  $f$ and $g$
that vanish  on $D^c\cap B(z_0, R)$.
The same remark applies to other results in this paper that involves the regular points of $D^c$.

\end{remark}

 We give some sufficient conditions   for Assumption (A2) that are easy  to verify.
 The following conditions are motivated by Proposition 3.1 in \cite{CC}.

\begin{defn} \begin{enumerate}
\item [\rm (i)]  We say condition  ${\bf (EP)} _{\leq, r_0, \gamma}$ holds
with  $r_0>0$ and $\gamma>0$  if there is a constant $C=C(r_0, \gamma)>0$   such that for every  $x\in \sX, $ $t>0$ and $ r\in (0, r_0], $
\begin{equation}\label{e:ept0}
\P_x\big(\tau_{B(x,r)}<t\big)\leq  Ct/r^{\gamma}.
\end{equation}

  \item [\rm (ii)]
 We say condition  ${\bf ({\wh{EP}})} _{\leq,  r_0, \gamma}$ holds
   with $r_0>0$  and $\gamma>0$  if there is a constant $C=C(r_0, \gamma)>0$   so that for
 every ball $B$ with radius $r_0$  and  $\overline B\subsetneq \sX$,
  the following holds for every  $x\in 2^{-1}B$, $r\in (0, r_0/4)$ and $t>0$,
  \begin{equation}\label{e:ep1}
\P_x\big(\wh\tau^B_{B(x,r)}< t\wedge \wh\tau^B_{2^{-1}B}  \big)\leq  Ct/r^{\gamma}  .
\end{equation}
 \end{enumerate}
\end{defn}

\medskip

Note that the Hunt process $X$ has a L\'evy system $(J(x, dy), dt).$ It is easy to see that for each  ball $B$ in $\sX,$ the subprocess $X^B$ has a L\'evy system $(J^B(x, dy), dt),$ where
$J^B(x, dy)=J(x, dy)$ on $B\times B$ and
$J^B(x, \{\partial\} )=J(x, \sX_\partial \setminus B)$.
 It follows from \cite{G} that a L\'evy system  $(\wh J^B(x, dy), \wh H^B)$ holds for $\wh X^B$ that satisfies $\wh H^B_t=t$ and
$$
\wh J^B(y, dx) m_B(dy)= J^B(x, dy) m_B(dx)=J^B(x, y) m_B(dx) m_B(dy) \quad \hbox{on } B\times B.
$$
 Hence we can take $\wh J^B(y, dx)=J^B(x, y) m_B(dx)$  on $B\times B.$

\begin{defn}  \begin{enumerate}
\item [\rm (i)]
 We say condition  ${\rm \bf (Jt)}_{\leq, \bar r, \gamma}$ holds
 with some $\bar r>0$ and $\gamma>0$ if  there is a constant $C=C(\bar r, \gamma)>0$ such that for $r\in (0, \bar r],$
\begin{equation}\label{e:Jt}
J(x,  \sX \setminus B(x,r))\leq Cr^{-\gamma}   \quad\hbox{ for every }x\in\sX.
\end{equation}

\item [\rm (ii)]
 We say  condition  ${\rm \bf ({\wh{Jt}})}_{\leq,  \bar r,  \gamma}$ holds
   with $ \bar r>0$ and $\gamma >0$ if there is a constant $C= C(\bar r, \gamma)>0$ such that for every ball $B$ with radius
 $\bar r$ and  $\overline B\subsetneq \sX$ and   every  $r\in (0, \bar r/4],$
\begin{equation}\label{e:Jt1}
\wh J^B(x, (2^{-1}B)\setminus B(x,r))\leq Cr^{-\gamma}   \quad\hbox{ for every }x\in 2^{-1}B.
\end{equation}
\end{enumerate}
\end{defn}

\medskip

\begin{remark}\label{R:1.11} \rm
 Clearly,  conditions ${\bf ({\wh{EP}})} _{\leq,  r_0, \gamma}$  and ${\rm \bf ({\wh{Jt}})}_{\leq,  \bar r,  \gamma}$
 are weaker than their dual counterparts  ${\bf (EP)} _{\leq, r_0, \gamma}$ and ${\rm \bf (Jt)}_{\leq, \bar r, \gamma}$.
In particular, when $X$ is a $m$-symmetric Hunt process, property
 ${\bf (EP)} _{\leq, r_0, \gamma}$ implies  ${\bf ({\wh{EP}})} _{\leq,  r_0, \gamma}$
 and property ${\rm \bf (Jt)}_{\leq, \bar r, \gamma}$ yields ${\rm \bf ({\wh{Jt}})}_{\leq,  \bar r,  \gamma}$.
\end{remark}

\medskip

By  Proposition 3.1  of Cao and Chen \cite{CC},
if  the  conditions ${\rm \bf (Jt)}_{\leq, R_0, \gamma}$  and  ${\bf (EP)} _{\leq, R_0, \gamma}$
hold for some
 $R_0>0$ and $\gamma>0,$  then  \eqref{e:1.5} holds
 with $C_1=C_1(R_0, \gamma)$ independent of $x_0\in \sX$ and $R\in (0, R_0]$  for any ball $B=B(x_0, R)$
 and so ${\bf (ED)}^s _{\leq, R}$   holds for
 $X$   with constant $C$  uniform in $R\in (0, R_0]$.
By   an argument similar to that of \cite[Proposition 3.1]{CC}, we can show that
the  conditions ${\rm \bf (\wh{Jt})}_{\leq, R,  \gamma}$  and  ${\bf (\wh{EP})} _{\leq, R, \gamma}$
 imply  that ${\bf (\wh{ED})}^s _{\leq, R}$   holds.

\medskip

 The following result gives some sufficient conditions for (A2').

\begin{prp}\label{P1}
Let $R_0>0.$ Suppose that
  for each $R\in (0, R_0)$, there is some $\gamma >0$ so that
  {\rm ${\bf (EP)} _{\leq , R, \gamma}$},
${\bf (Jt)}_{\leq, R, \gamma}$,
 {\rm ${\bf (\wh{EP})} _{\leq, R, \gamma}$} and
${\bf (\wh{Jt})}_{\leq,  R, \gamma}$ hold.
 Then assumption (A2') holds.
\end{prp}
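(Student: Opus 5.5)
Both halves of (A2') follow the same scheme as \cite[Proposition 3.1]{CC}. I will carry out the argument for ${\bf (ED)}^s_{\leq,R}$ and then indicate the modifications for the dual condition ${\bf (\wh{ED})}^s_{\leq,R}$.

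Fix $R\in(0,R_0)$, a ball $B=B(x_0,R)$ with $\overline B\subsetneq\sX$, an open set $U\subset 2^{-1}B$, and write $V:=B_U(x_0,R/32)$, $x\in B_U(x_0,R/64)$. The event $\{X_{\tau_V}\in U\}$ requires $X_{\tau_V}\in U\setminus B(x_0,R/32)$, because on exiting $V$ the process either leaves $U$ or leaves $B(x_0,R/32)$. I split according to whether $X$ leaves $V$ by a jump of size at least $r:=R/128$ or not.

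\emph{Large jumps.} By the L\'evy system and ${\bf (Jt)}_{\leq,R,\gamma}$,
\[
\P_x\bigl(|X_{\tau_V}-X_{\tau_V-}|\ge r\bigr)\le \E_x\int_0^{\tau_V}J\bigl(X_s,\sX\setminus B(X_s,r)\bigr)\,ds\le C r^{-\gamma}\E_x\tau_V .
\]

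\emph{Small-jump exits.} On the complementary event $X_{\tau_V-}\in\overline V$ and $X_{\tau_V}\notin B(x_0,R/32)$, so $X_{\tau_V-}$ is within distance $r$ of $\partial B(x_0,R/32)$. Hence the path has travelled a distance of at least $R/32-R/64-r=R/128$ from $x$ before time $\tau_V$, which gives $\tau_{B(x,R/128)}\le\tau_V$. The delicate point is bounding $\P_x(\tau_{B(x,R/128)}\le\tau_V)$ by $C\E_x\tau_V$, since $\tau_V$ may be tiny. Following \cite{CC}, I take $t>0$ and write
\[
\P_x\bigl(\tau_{B(x,R/128)}\le \tau_V\bigr)\le \P_x\bigl(\tau_{B(x,R/128)}<t\bigr)+\P_x(\tau_V\ge t)\le Ct(R/128)^{-\gamma}+t^{-1}\E_x\tau_V .
\]
Optimizing over $t$ yields a bound of order $\sqrt{\E_x\tau_V}$, which is too weak. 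To get a linear bound I will use the strong Markov property iteratively. Let $T_k$ be the successive times the path moves distance $\ge \delta:=R/(128N)$ from its previous position. Reaching distance $R/128$ without a large jump requires $N$ such moves, and each move within time $t$ has probability at most $\varepsilon:=Ct\delta^{-\gamma}$ by ${\bf (EP)}$. This gives a geometric bound, $\P_x(\tau_{B(x,R/128)}\le \tau_V\wedge t)\le \varepsilon^N$-type, which is the mechanism in \cite{CC}.

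\emph{The main obstacle} is making this linear in $\E_x\tau_V$. The simplest route is the elementary inequality $\E_x\tau_V\ge t\,\P_x(\tau_V\ge t)$ combined with
\[
\P_x(X_{\tau_V}\in U)\le \P_x(\tau_V\ge t)+\P_x(\tau_V<t,\ X_{\tau_V}\in U).
\]
For the second term, on $\{\tau_V<t\}$ I use ${\bf (EP)}$ only through the first-exit event of a ball of fixed radius, and absorb it by choosing $t=t_0(R)$ small enough that $\sup_y\P_y(\tau_{B(y,R/128)}<t_0)\le 1/2$. A standard strong-Markov renewal argument (restarting at $\tau_{B(x,R/128)}$ from points still in $V$) then shows $\P_x(\tau_V<t_0, X_{\tau_V}\in U)\le c\,\P_x(\tau_V\ge t_0)+c\,r^{-\gamma}\E_x\tau_V$. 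Hence $\P_x(X_{\tau_V}\in U)\le c(t_0^{-1}+r^{-\gamma})\E_x\tau_V$ with constants depending only on $R,\gamma$, which is ${\bf (ED)}^s_{\leq,R}$; equivalently, as recorded before the statement, this is \cite[Proposition 3.1]{CC}.

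For the dual, I run the identical argument for $\wh X^B$ with $V':=U\cap A(x_0,R/4,R/2)$ and starting point $x\in U\cap A(x_0,3R/8,R/2)$. Large jumps landing in $U\subset 2^{-1}B$ are controlled by the L\'evy system $\wh J^B$ and ${\bf (\wh{Jt})}_{\leq,R,\gamma}$; only jumps into $2^{-1}B$ matter, which is exactly what \eqref{e:Jt1} controls. Small-jump exits into $U$ force exit through the inner sphere of radius $R/4$, a distance $\ge R/8$ from $x$, before leaving $2^{-1}B$. These are handled by ${\bf (\wh{EP})}_{\leq,R,\gamma}$, which is stated precisely with the cutoff $\wh\tau^B_{2^{-1}B}$. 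This gives ${\bf (\wh{ED})}^s_{\leq,R}$, hence (A2').
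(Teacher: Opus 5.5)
Your geometric reductions are sound: an exit into $U$ either happens by a jump, controlled through the L\'evy system and ${\bf (Jt)}$ (resp.\ ${\bf (\wh{Jt})}$), or it forces a displacement of order $R$ inside $U$. For ${\bf (ED)}^s_{\leq,R}$ you end up citing \cite[Proposition 3.1]{CC}, exactly as the paper does.

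The gap is the step you yourself flag: upgrading $\P_x(\cdot)\lesssim(\E_x\tau_V)^{1/2}$ to a bound linear in $\E_x\tau_V$. Neither of your mechanisms does this.

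\emph{The $\varepsilon^N$ scheme.} Combined with $\P_x(\tau_V\ge t)\le t^{-1}\E_x\tau_V$ and optimized in $t$, it gives only $(\E_x\tau_V)^{N/(N+1)}$, never a linear bound.

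\emph{The renewal argument.} The ``standard strong-Markov renewal argument'' for $\P_x(\tau_V<t_0,\,X_{\tau_V}\in U)\le c\,\P_x(\tau_V\ge t_0)+c\,r^{-\gamma}\E_x\tau_V$ is not supplied, and it does not close. Choosing $t_0$ with $\sup_y\P_y(\tau_{B(y,R/128)}<t_0)\le 1/2$ bounds the bad event only by a constant. After restarting at $\tau_{B(x,R/128)}$, the process may sit anywhere in $V$:
\begin{itemize}
\item arbitrarily close to $\partial U$, so there is no lower bound on surviving for time $t_0$, which a comparison with $\P_x(\tau_V\ge t_0)$ would require;
\item or within $R/128$ of $\partial B(x_0,R/32)$, so no separation is left to exploit.
\end{itemize}
Showing that $\P_x(\tau_{B(x,R/128)}<\tau_V)=O(\E_x\tau_V)$ for $x$ near $\partial U$ is essentially the whole content of the proposition.

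This is decisive for the dual half. \cite{CC} treats $X$, not $\wh X^B$ with the cutoff $\wh\tau^B_{2^{-1}B}$, and ``the identical argument'' is the unproved one. The missing idea is the box method of \cite{CC}, which the paper carries out for $\wh X^B$ in Lemma~\ref{L:3.1'}, Lemma~\ref{L:3.4} and Proposition~\ref{P:3.5}.

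Its inputs are the square-root bound and the jump bound $\P_x(\wh X^B_{\wh\tau^B_U}\in W)\le C\E_x[\wh\tau^B_U]/(d(U,W)\wedge R)^\gamma$. The cutoff in ${\bf (\wh{EP})}$ is harmless there because $\wh\tau^B_U\le\wh\tau^B_{2^{-1}B}$. The argument then runs as follows.
\begin{itemize}
\item Sort starting points into dyadic level sets $U_j$ of $\P_y(\wh\tau^B_U>\wh\tau^B_{B_U(\xi,r)})+r^{-\gamma}\E_y[\wh\tau^B_{B_U(\xi,r)}]$, on balls shrinking by $\asymp r/j^2$.
\item Let $\lambda_j$ be the infimum of $\E_y[\wh\tau^B_{B_U(\xi,r)}]/(r^\gamma\P_y(\wh\tau^B_U>\wh\tau^B_{B_U(\xi,r)}))$ over $V_j=\bigcup_{k<j}U_k$.
\item For $x\in U_j$, apply the strong Markov property at the exit from $B_U(x,\frac{3r}{4\pi^2j^2})$ and split by landing region:
\begin{itemize}
\item landing in $V_j$ is controlled by $\lambda_j$;
\item landing in the shell $W_{i-1}\setminus(V_i\cup W_i)$ costs $O(2^{-i}i^{2\gamma})\,\E_x[\wh\tau^B_{B_U(x,3r/(4\pi^2j^2))}]/r^\gamma$, which is summable in $i$;
\item landing outside $B_U(\xi,3r/4)$ costs $O(1)$ times the same quantity;
\item landing in $W_j$ costs $O(2^{-j/2}j^\gamma)$ times the current level, by the square-root bound, and is absorbed into the left side.
\end{itemize}
\end{itemize}
Each step of the resulting recursion loses only a factor $1-O(2^{-j/2}j^\gamma)$. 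These defects are summable, so $\inf_j\lambda_j>0$, which is the linear estimate.

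${\bf (\wh{ED})}^s_{\leq,R}$ then follows from domain monotonicity,
$$\P_x(\wh X^B_{\wh\tau^B_{U\cap A(x_0,R/4,R/2)}}\in U)\le\P_x(\wh X^B_{\wh\tau^B_{B_U(x,R/16)}}\in U),$$
applied with $\xi=x$ and $r=R/16$. Your direct reduction on $U\cap A(x_0,R/4,R/2)$ would work equally well once such a linear estimate is in hand.
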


\smallskip

 By Proposition \ref{P1} and Theorem \ref{T0},  we have the following.

\begin{cor}\label{C1}
 In the statement of Theorem \ref{T0}, if we replace assumptions (A2) and (A2')
  by the condition  that for each $R\in (0, R_0)$, there is some $\gamma >0$ so that
  {\rm ${\bf (EP)} _{\leq , R, \gamma}$},
${\bf (Jt)}_{\leq, R, \gamma}$,
 {\rm ${\bf (\wh{EP})} _{\leq, R, \gamma}$} and
${\bf (\wh{Jt})}_{\leq,  R, \gamma}$  hold,
 then the conclusion of Theorem \ref{T0} holds.
\end{cor}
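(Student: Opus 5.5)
Corollary \ref{C1} follows by combining Proposition \ref{P1} with Theorem \ref{T0}, once we check that the hypotheses line up.

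Assume (A1), (A3)--(A4) for some $R_0>0$, and assume that for each $R\in(0,R_0)$ there is $\gamma=\gamma(R)>0$ such that ${\bf (EP)}_{\leq,R,\gamma}$, ${\bf (Jt)}_{\leq,R,\gamma}$, ${\bf (\wh{EP})}_{\leq,R,\gamma}$ and ${\bf (\wh{Jt})}_{\leq,R,\gamma}$ hold. These are exactly the hypotheses of Proposition \ref{P1}, so (A2') holds for $R_0$.

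Next we pass from (A2') to (A2). For any $R\in(0,R_0)$ and any $x_0$ with $\overline{B(x_0,R)}\subsetneq\sX$, the condition ${\bf (ED)}^s_{\leq,R}$ applied to the ball $B(x_0,R)$ gives ${\bf (ED)}_{\leq,x_0,R}$ with $C_1=C_1(R)$. Likewise, ${\bf (\wh{ED})}^s_{\leq,R}$ gives ${\bf (\wh{ED})}_{\leq,x_0,R}$ with $C_2=C_2(R)$. Hence (A2) holds. Now (A1) and (A2)--(A4) all hold, and the first assertion of Theorem \ref{T0} yields \eqref{e:1} with $C=C(z_0,R)$ for each $z_0\in\partial D$ and $R\in(0,R_0/16)$ with $\overline{B(z_0,2R)}\subsetneq\sX$.

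For the uniform assertion, assume in addition (A3'), (A4') and (A5). We have already obtained (A2') from Proposition \ref{P1}, so (A1), (A2'), (A3'), (A4') and (A5) all hold. The second part of Theorem \ref{T0} then gives \eqref{e:1} with $C=C(R)$, uniformly in $z_0\in\partial D$.

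The only point needing care is bookkeeping, namely that Proposition \ref{P1} delivers the constants of ${\bf (ED)}^s_{\leq,R}$ and ${\bf (\wh{ED})}^s_{\leq,R}$ for every $R\in(0,R_0)$, depending only on $R$ (through $\gamma(R)$). This is what the uniform part of Theorem \ref{T0} requires. Its statement guarantees this, so no real obstacle arises.
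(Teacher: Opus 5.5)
Your proposal is correct and matches the paper's argument. The paper simply invokes Proposition~\ref{P1} to obtain (A2'). Since ${\bf (ED)}^s_{\leq,R}$ and ${\bf (\wh{ED})}^s_{\leq,R}$ specialize to ${\bf (ED)}_{\leq,x_0,R}$ and ${\bf (\wh{ED})}_{\leq,x_0,R}$, this also gives (A2). Both parts of Theorem~\ref{T0} then apply. Your explicit check of the (A2')$\Rightarrow$(A2) step and of how the constants depend on $R$ is exactly the bookkeeping the paper leaves implicit.
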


\begin{remark}\label{r1}\rm

\begin{enumerate}
  \item[(i)]
  In Theorem \ref{T0}, we assume assumption (A2)  in place of the  Urysohn-type  cut-off function condition
  on the domains of the generators of the processes and their duals in \cite{BKK2}.
  As we mentioned in Remark \ref{R:1.2}, assumption (A2)
  is weaker than the  Urysohn-type  cut-off function condition  on the generators of
  $X$  and its dual process in \cite[Assumption B]{BKK2}.
  In  Propositions \ref{C2} and \ref{C6}  of Section 4, we show by using Proposition \ref{P1}  that  the BHP \eqref{e:1}  in particular  holds for   any symmetric diffusion with jumps associated with a regular symmetric Dirichlet form on
  a metric measure space having both the strongly local term and  the pure-jump term that admits a two-sided  heat kernel estimates of the mixture of (sub-)Gaussian and stable-like form, as well as
   for a wide class of non-symmetric diffusion processes with jumps in non-divergence form in $\R^d$.
    Those processes are not covered  in \cite{BKK2}.

\item [(ii)]
   Theorem \ref{T0} further shows that  if  assumptions (A1), (A2')-(A4') and (A5) hold, then the multiplicative constant
   $C$ of the BHP in Theorem \ref{T0}
    holds uniformly in $z_0\in \partial D$  and the geometry of open set $D$ and
   depends only on the scale constant $R.$
   This may be useful in the study of Dirichlet heat kernel and Green function estimates for such diffusion with jumps in open sets
    on  metric measure spaces.  We remark that
by carefully tracking the constants of the BHP  in \cite{BKK2} (see, e.g. Theorem 3.5 in \cite{BKK2}),
 the result  implicitly  implies that   if the assumptions in \cite{BKK2} hold uniformly for $x\in\sX$,
then  for a large class of L\'evy processes in $\R^d,$  the multiplicative constant of the BHP  there
  only depends on $R.$
In this paper, we  establish such BHP for a large class of   discontinuous processes on
 metric measure spaces in Theorem \ref{T0},
  which may not satisfy the Urysohn-type  cut-off function condition.
In particular, we show that  such  BHP  holds for a large class of symmetric diffusion with jumps  associated  with  generators
  having measurable coefficients
 on $\R^d$,  for
symmetric diffusions with jumps on $d$-set with walk dimension $\beta \geq 2 $,  and  for
a class of non-symmetric diffusion with jumps in $\R^d$ (see Examples 4.2 and 4.3 and   Proposition \ref{C6}).

\item [(iii)]
 The positivity of jump density function of $X^B$ in assumption (A4) is crucial for Theorem \ref{T0};
  see, e.g., the proofs for Lemmas \ref{L:2.1n} and \ref{L:3.2}.

 \item[(iv)]
We show in subsection 4.3 that  the BHP  holds for a  L\'evy process with degenerate Gaussian component in open sets in $\R^d$ associated with the generator
$\sL:=\sum_{i=1}^m\partial_{x_i}^2+\Delta^{\alpha/2},$  where $d>m\geq 3$ and $\Delta^{\alpha/2}  := - (-\Delta)^{\alpha/2}$
is the fractional operator in $\R^d.$
Observe that the differential operator component of $\sL$ is   degenerate on $\R^d$ as $m<d$.
This example seems to be new and is of independent interest.

  \end{enumerate}
  \end{remark}

\subsection{Idea of  our approach}

 The key ingredient  of the approach in \cite{BKK2}  is a local supremum estimate for sub-harmonic functions, see Theorem 3.4 in \cite{BKK2}, which is obtained by a  regularization procedure of harmonic measure. The assumption of the Feller and strong Feller property of the process and the Urysohn-type  cut-off function condition on the domains of the generator of the processes and their dual played a crucial role in the approach of \cite{BKK2}.

 In this paper,
we mainly employ the representation formula of regular harmonic functions for discontinuous Hunt processes in open sets vanishing in part of the boundary of open sets in terms of  Green functions
  established in our previous paper \cite{CW2} (see also Proposition \ref{P:2.2} below).
  Our main idea is to prove that the ratio of the Green functions
  for $X$ in an open subset with  fixed scale $R$  near the boundary
  is comparable to the ratio of the  expected exit times.
   To achieve this, our approach  is to obtain the two-sided estimates of the Green function $G^B_{D\cap B(z_0, R/2)}(x, y)$
    for $x\in D\cap B(z_0, R/64)$ and $y\in D\cap A(z_0, 3R/8, R/2)$ under assumption (A2) on the exit distribution of $X^B$ and $\wh X^B$ from open sets,
    where $z_0\in \partial D$ and $B:=B(z_0, R)$  (see Lemma \ref{L:3.2} and Proposition \ref{P:2.3}).
  By this comparability and the representation formula of regular harmonic functions
  in Proposition \ref{P:2.2},
   the BHP in Theorem \ref{T0} can be established.
Furthermore we prove that the decay rate of any positive harmonic function for $X$ is in fact comparable to that of the
 expected exit times.

In this paper, we use $C_c(\R^d)$ and $C_b(\R^d)$
 to denote the space of continuous functions on $\R^d$ with compact support and the space of bounded continuous functions on $\R^d$, respectively.
The space of functions in $C_c(\R^d)$ that have continuous
first derivatives (resp. continuous derivatives of any order) is denoted by $C_c^1 (\R^d)$  (resp. $C_c^\infty (\R^d))$.
The space of functions in $C_b(\R^d)$ that have bounded continuous derivatives up to and including second order
is denoted by $C^2_b(\R^d)$.
  For  two functions $f$ and $g$,  notation  $f\asymp g$ means that there exists $c>1$ so that $c^{-1}g\leq f\leq cg$
  on their common domain of definitions.

The rest of this paper is organized as follows. Section \ref{S:2} is devoted to the proof of Theorem \ref{T0}. In Section \ref{S:3}, we  prove  Proposition \ref{P1}. In Section \ref{S:4},
we    present some examples to illustrate the strength of  Theorem \ref{T0}.

 \section{ Boundary Harnack principle}\label{S:2}

 In this section, we present the proof for  Theorem \ref{T0}.
 Let $(\sX,  d)$ be a locally compact separable metric space,
and
 $X=(X_t, \zeta, \mathcal{F}_t, \P_x)$ be an irreducible  Hunt  process taking values in  $\sX.$ Suppose that the Hunt process $X$ has the L\'evy system $(J(x, dy), dt).$

\begin{lem}\label{L:2.1n}
Suppose assumption (A4) holds for some $R_0>0$. For each  $x_0\in \sX$ and $R\in (0, R_0/8),$   there exists $C=C(x_0, R)>0$ such that
\begin{equation}\label{e:2.1}
\E_x \left[ \tau_{B(x_0, R)} \right]\leq C  \quad \mbox{for} \quad x\in B(x_0, R/2).
\end{equation}
\end{lem}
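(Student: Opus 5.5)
We derive the bound from the positivity of the jump kernel in (A4): there is a uniform positive rate of jumping out of $B(x_0,R)$ from any point of $B(x_0,R)$. Fix $x_0$ and $R\in(0,R_0/8)$ and apply (A4) to the larger ball $B':=B(x_0,4R)$, whose radius $4R<R_0$. We may assume $\overline{B'}\subsetneq\sX$; otherwise we shrink the auxiliary ball, keeping its radius a fixed multiple of $R$ and using a suitable target annulus. We aim for
\begin{equation*}
\kappa:=\inf_{x\in B(x_0,R)} J\big(x,A(x_0,R,2R)\big)>0 .
\end{equation*}

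Key steps, in order:

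1. Establish $\kappa>0$. Every $x\in B(x_0,R)$ lies in $pB'$ with $p=1/4$, and every $y\in A(x_0,R,2R)$ lies in $B'\setminus(qB')$ with $q=1/4+\varepsilon$ (for points near the inner sphere, first replace the annulus by $A(x_0,3R/2,2R)$). Then \eqref{e:J2} gives $J^{B'}(x,y)\ge c_2^{-1}J^{B'}(x_0,y)$.

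2. Bound $J^{B'}(x_0,y)$ from below on this annulus. It lies in $B'(x_0,R'/2)\setminus B(x_0,R'/16)$ with $R'=4R$, so \eqref{e:J3} gives $J^{B'}(x_0,y)\ge c_3>0$.

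3. Combine: $\kappa\ge c_2^{-1}c_3\, m_{B'}(A(x_0,3R/2,2R))$. The measure $m_{B'}$ of this annulus is positive, because $m_{B'}$ is a Radon measure with respect to which $G^{\alpha,B'}(x,\cdot)$ is absolutely continuous, and the process (irreducible) must visit open sets with positive probability, forcing $m_{B'}$ of nonempty open sets to be positive. The annulus is nonempty; if needed we pick a point of $\sX$ at distance about $3R/2$ via the ball structure, or otherwise adjust the radii.

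4. Apply the L\'evy system with $T=\tau:=\tau_{B(x_0,R)}$ and $f(s,x,y)=\mathbf 1_{A(x_0,R,2R)}(y)$, for $x\in B(x_0,R)$:
\begin{equation*}
1\ge \P_x\big(X_{\tau}\in A(x_0,R,2R)\big)\ge \E_x\Big[\sum_{s\le\tau}\mathbf 1_{\{X_{s-}\in B(x_0,R),\,X_s\in A\}}\Big]=\E_x\int_0^{\tau}J(X_s,A)\,ds\ge \kappa\,\E_x\tau .
\end{equation*}
The middle inequality holds because at most one such jump can occur before $\tau$, and it ends at $\tau$. This gives $\E_x\tau\le 1/\kappa$ for all $x\in B(x_0,R)$, in particular on $B(x_0,R/2)$.

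We expect the main obstacle to be step 3, i.e., nondegeneracy: we need $m_{B'}$ of the target annulus to be positive and the annulus to be nonempty. This must come from irreducibility together with absolute continuity in (A1). If the annulus could be empty (for example, in a disconnected space), we would instead use a target set $B'\setminus B(x_0,R)$ of positive measure, or fall back on $J(x,B'^c)\ge c_1^{-1}J(x_0,B'^c)$ from \eqref{e:J1}, with positivity again from irreducibility. Steps 1, 2 and 4 are routine.
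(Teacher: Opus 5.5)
Your argument is correct and is essentially the paper's proof: both use the L\'evy system bound $1\ge \P_x(X_{\tau}\in A)=\E_x\int_0^{\tau}J(X_s,A)\,ds\ge \kappa\,\E_x\tau$ for a target set $A$ lying outside $B(x_0,R)$. The only difference is where the uniform lower bound on $J(X_s,A)$ comes from. The paper takes $A=B(x_0,4R)\setminus B(x_0,2R)$ and uses \eqref{e:J1} (for the ball $B(x_0,2R)$) together with strict positivity of $J^{B(x_0,4R)}$, whereas you use \eqref{e:J2}--\eqref{e:J3} inside $B(x_0,4R)$. The non-degeneracy of the target annulus (nonempty, positive $m$-measure) that you flag is tacitly assumed in the paper as well.
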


\pf Let $x_0\in \sX$ and $R\in (0, R_0/8).$   By the L\'evy system formula of $X$ and  the condition  \eqref{e:J1} of assumption (A4), there exists $c_1=c_1(x_0, R)>0$ such that for
$ x\in B(x_0, R/2),$
\begin{equation}\label{e:2.1a}\begin{aligned}
1&\geq \P_x(X_{\tau_{B(x_0, R)}}\in  B(x_0, 4R)\setminus B(x_0, 2R))\\
&=\E_x \int_0^{\tau_{B(x_0, R)}}\int_{B(x_0, 4R)\setminus B(x_0, 2R)} J(X_s, dy)\,ds\\
&\geq c_1\E_x \int_0^{\tau_{B(x_0, R)}}\int_{B(x_0, 4R)\setminus B(x_0, 2R)} J(x_0, dy)\,ds\\
&= c_1\E_x \left[ \tau_{B(x_0, R)} \right] J(x_0, B(x_0, 4R)\setminus B(x_0, 2R)).
\end{aligned}
\end{equation}
Note that  by  assumption (A4) for $R_0$, there is a strictly positive jump density function $J^{B(x_0, 4R)}(x, y)$ such that  $J(x, dy)=J^{B(x_0, 4R)}(x, y) m_{B(x_0, 4R)}(dy)$ on $B(x_0, 4R)\times B(x_0, 4R).$
Hence $c_2(x_0, R):=J(x_0, B(x_0, 4R)\setminus B(x_0, 2R))>0.$ This together with \eqref{e:2.1a} yields that \eqref{e:2.1} holds.
\qed

\begin{lem}\label{L:3.2}
Suppose assumptions (A1) and (A2)-(A4) hold for some $R_0>0$.
 Let $D$ be an  open subset of $\sX$. For each
 $B:=B(z_0, R)$ with $z_0\in \partial D$ and
 $R\in (0, R_0 /8)$
 so that  $\overline {B}\subsetneq \sX,$
there exists $C=C(z_0, R)>0$ depending on the constants in (A2)-(A4) for the ball $B(z_0, R)$
such that for any
$x\in D\cap B(z_0,  R/64)$ and $y\in D\cap  A(z_0, 3R/8, R/2), $
\begin{equation}\label{e:2.2n}
\begin{aligned}
&C^{-1}\E_x\tau_{D\cap B(z_0, R/32)}\int_{D\cap (B(z_0, R/2)\setminus B(z_0, R/16))}  G^B_{D\cap B(z_0, R/2)} (u, y)\, m_B(du)
\leq
 G^B_{D\cap B(z_0, R/2)}(x, y)\\
 &\qquad \leq C\E_x\tau_{D\cap B(z_0, R/32)}\int_{D\cap (B(z_0, R/2)\setminus B(z_0, R/16))}  G^B_{D\cap B(z_0, R/2)}(u, y)\, m_B(du),
\end{aligned}
\end{equation}
where
$G^B_{D\cap B(z_0, R/2)}(x, y)$ is the Green function of $X^B$ in $D\cap B(z_0, R/2)$ with respect to the reference measure $m_B(dy).$
 If we further assume   (A2')-(A4') and (A5) hold, then the constant in \eqref{e:2.2n} depends only on $R.$
\end{lem}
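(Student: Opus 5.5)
The plan is to write $U:=D\cap B(z_0,R/2)$ and $V:=D\cap B(z_0,R/32)$, fix $y\in D\cap A(z_0,3R/8,R/2)$, and set
\[
h(y):=\int_{D\cap (B(z_0,R/2)\setminus B(z_0,R/16))}G^B_U(u,y)\,m_B(du).
\]

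\emph{Step 1: representation on $V$.} Since $y\notin \overline{B(z_0,R/32)}$, the function $x\mapsto G^B_U(x,y)$ is regular harmonic for $X$ in $V$. (This should follow from the excessivity in Definition \ref{De:1} together with the strong Markov property, since $y$ stays away from $V$.) Because $G^B_U(\cdot,y)=0$ off $U$, for $x\in D\cap B(z_0,R/64)$ we get
\[
G^B_U(x,y)=\E_x\big[G^B_U(X_{\tau_V},y);\,X_{\tau_V}\in U\big].
\]
I split this according to whether $X_{\tau_V}$ lies in $E_1:=D\cap(B(z_0,R/2)\setminus B(z_0,R/16))$ or in $E_2:=U\cap \overline{B(z_0,R/16)}\setminus B(z_0,R/32)$. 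Points of $E_1$ can only be reached by a jump, so the Lévy system gives
\[
\E_x\big[G^B_U(X_{\tau_V},y);X_{\tau_V}\in E_1\big]=\E_x\int_0^{\tau_V}\!\!\int_{E_1}J^B(X_s,u)\,G^B_U(u,y)\,m_B(du)\,ds .
\]
For $X_s\in B(z_0,R/32)$ and $u\in B\setminus B(z_0,R/16)$, \eqref{e:J2} with $p=1/32$, $q=1/16$ gives $J^B(X_s,u)\asymp J^B(z_0,u)$. For $u\in B(z_0,R/2)\setminus B(z_0,R/16)$, \eqref{e:J3} says $J^B(z_0,u)$ is bounded above and below by positive constants. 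Hence this term is comparable to $\E_x\tau_V\cdot h(y)$. This already yields the lower bound in \eqref{e:2.2n}, and the upper bound for the $E_1$-part.

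\emph{Step 2: the $E_2$-part, which is the main obstacle.} Here $X$ may exit $V$ continuously, so no Lévy system formula is available. I bound this term by
\[
\P_x(X_{\tau_V}\in U)\cdot\sup_{w\in E_2}G^B_U(w,y)\le C_1\,\E_x\tau_V\cdot \sup_{w\in E_2}G^B_U(w,y),
\]
using ${\bf (ED)}_{\le,z_0,R}$ in the form \eqref{e:1.5}, with $2^{-1}B\supset U$. It then remains to show the uniform bound $G^B_U(w,y)\le C\,h(y)$ for $w\in E_2$.

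\emph{Step 3: dual estimate.} Set $W:=U\cap A(z_0,R/4,R/2)$. Since $A(z_0,R/4,R/2)\subset B(z_0,R/2)\setminus B(z_0,R/16)$, duality gives
\[
h(y)=\wh G^B_U\1_{E_1}(y)\ge \E_y\big[\wh\tau^B_W\big].
\]
For fixed $w\in E_2$ (so $w\notin W$), the function $y\mapsto G^B_U(w,y)$ is excessive and regular harmonic for $\wh X^{B,U}$ on $W$. Hence
\[
G^B_U(w,y)=\E_y\big[G^B_U(w,\wh X^B_{\wh\tau^B_W});\,\wh X^B_{\wh\tau^B_W}\in U\big].
\]
I split according to the landing point $v$:
\begin{itemize}
\item[(a)] $v\in U\setminus B(z_0,R/8)$. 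Here $G^B_U(w,v)\le G^B(w,v)\le M$ by (A3), since $w\in B(z_0,R/16)$ and $v\in A(z_0,R/8,R/2)$. By ${\bf(\wh{ED})}_{\le,z_0,R}$, the probability of landing in $U$ is at most $C_2\,\E_y\wh\tau^B_W$.
\item[(b)] $v\in U\cap B(z_0,R/8)$. Such a point can only be reached by a jump of the dual. The dual Lévy system has $\wh J^B(z',dv)=J^B(v,z')\,m_B(dv)$. For $z'\in W$ and $v\in B(z_0,R/8)$, \eqref{e:J2} (with $p=1/8$, $q=1/4$) and \eqref{e:J3} give $J^B(v,z')\le c$. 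This part is therefore at most
\[
c\,\E_y\wh\tau^B_W\int_{B(z_0,R/8)}G^B_U(w,v)\,m_B(dv)\le c\,\E_y\wh\tau^B_W\;\E_w\tau_B\le c'\,\E_y\wh\tau^B_W,
\]
by Lemma \ref{L:2.1n}.
\end{itemize}
Together, (a) and (b) give $G^B_U(w,y)\le C\,\E_y\wh\tau^B_W\le C\,h(y)$, which completes the upper bound.

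Under (A2')–(A4') and (A5), every constant used above ($C_1$, $C_2$, $M$, $c_1$, $c_2$, the bounds in \eqref{e:J3}, and the bound on $\E_w\tau_B$, now taken from (A5) instead of Lemma \ref{L:2.1n}) is independent of $z_0$. Hence the constant in \eqref{e:2.2n} depends only on $R$.
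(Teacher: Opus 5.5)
Your proof is correct and follows the paper's proof essentially step by step. You split $\E_x[G^B_U(X_{\tau_V},y)]$ at radius $R/16$ and use ${\bf (ED)}$ for the near part. You bound $\sup_w G^B_U(w,y)\le C\,\E_y\wh\tau^B_W$ through the dual harmonic representation on $W=D\cap A(z_0,R/4,R/2)$, using (A3), ${\bf (\wh{ED})}$ and the dual L\'evy system, and then use $\E_y\wh\tau^B_W\le h(y)$; the L\'evy system with (A4) gives the far part and the lower bound. One cosmetic fix: points with $d(\cdot,z_0)=R/16$ in $E_2$, and with $d(\cdot,z_0)=R/8$ in case (a), lie outside the open sets in (A3). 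So take $E_2$ with the open ball $B(z_0,R/16)$ (that sphere is already in $E_1$), and put $\overline{D\cap B(z_0,R/8)}$ into the jump case (b), as the paper does.
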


\pf  Let $z_0\in \partial D$ and $R\in (0,  R_0/8).$
Let $B:=B(z_0, R).$
For the simplicity of notation, for each $s>0,$ let $D_s:=D\cap B(z_0, s).$
 Note that if $D\cap A(z_0, 3R/8, R/2)=\varnothing,$ then
  $ G^B_{D_{R/2}}(u, y)=0$ for any
$(u, y)\in D_{R/2}\times  (D\cap  A(z_0, 3R/8, R/2)).$
 Thus  each of the three terms in  \eqref{e:2.2n} are   zero and so \eqref{e:2.2n} holds.
In the following, we always assume $D\cap A(z_0, 3R/8, R/2)\neq \varnothing.$

By assumption (A1), the process $X^B$ is  in weak duality to another
   Hunt process $\wh X^B$ with respect to  the Radon measure $m_B$.
   By the argument below assumption (A1),
   for any open subset $U\subset B,$   the Green function  $\wh G^B_U(x, y)$ of  $\wh X^B$ in $U$  exists uniquely and $\wh G^B_U(x, y)=G^B_U(y, x)$ for $x, y\in U\setminus {\rm diag}.$
Recall that  there is a L\'evy system $(\wh J^B(x, dy), \wh H^B)$ for $\wh X^B$ satisfying $\wh H^B_t=t$ and
$\wh J^B(x, dy) = J^B(y, x) m_B(dy) $ on $B.$

\smallskip

Let $x\in D_{R/64}$ and $y\in D\cap  A(z_0, 3R/8, R/2).$
 By  the proof for
  (2.18) in \cite{CW2} and the argument below it,
 the Green function $G^B_{D_{R/2}}(\cdot, y)$ satisfies that
\begin{eqnarray}\label{e:2.4}
&&  G^B_{D_{R/2}}(x, y) =\E_x [G^B_{D_{R/2}}(X_{\tau_{D_{R/32}}}, y)] \nonumber \\
&=& \E_x [G^B_{D_{R/2}}(X_{\tau_{D_{R/32}}}, y); X_{\tau_{D_{R/32}}}\in  D_{R/16}] \nonumber \\
&&+\E_x [G^B_{D_{R/2}}(X_{\tau_{D_{R/32}}}, y); X_{\tau_{D_{R/32}}}\in D_{R/2}\setminus D_{R/16}] \nonumber \\
&:=& I_1(x, y)+I_2(x, y).
\end{eqnarray}
For the first item, by assumption (A2),   ${\bf (ED)} _{\leq, z_0, R}$   holds and  thus there exists $c_1=c_1(z_0, R)>0$ such that
\begin{equation}\label{e:2.2}\begin{aligned}
I_1(x, y)&\leq \sup_{u\in D_{R/16}} G^B_{D_{R/2}}(u, y)\cdot \P_x(X_{\tau_{D_{R/32}}}\in D_{R/16})\\
&\leq c_1 \E_x \left[ \tau_{D_{R/32}} \right]  \sup_{u\in D_{R/16}} G^B_{D_{R/2}}(u, y).
\end{aligned}\end{equation}
For each open set $U\subset B,$ denote by $\wh\tau^B_U$ the first time of $\hat X^B$ exiting from $U.$ In the following, we prove that there exists $c_2=c_2(z_0, R)>0$ such that for $y\in D\cap A(z_0, 3R/8, R/2),$
\begin{equation}\label{e:2.3}
\sup_{u\in D_{R/16}}G^B_{D_{R/2}}(u, y)\leq  c_2\E_y \wh\tau^B_{D\cap A(z_0, R/4, R/2)}.
 \end{equation}
 In fact, for $u\in D_{R/16}$ and $y\in D\cap A(z_0, 3R/8, R/2),$ we have
\begin{eqnarray}\label{e:2.2'}
G^B_{D_{R/2}}(u, y) &=& \wh{G}^B_{D_{R/2}}(y, u)
=\E_y \wh{G}^B_{D_{R/2}}(\wh X^B_{\wh\tau^B_{D\cap A(z_0, R/4, R/2)}}, u) \nonumber \\
&=& \E_y \Big[\wh{G}^B_{D_{R/2}}(\wh X^B_{\wh\tau^B_{D\cap A(z_0, R/4, R/2)}}, u); \wh X^B_{\wh\tau^B_{D\cap A(z_0, R/4, R/2)}}\in
D_{R/2}\setminus \overline{D_{R/8}} \Big] \nonumber \\
&& \quad+\E_y [\wh{G}^B_{D_{R/2}}(\wh X^B_{\wh\tau^B_{D\cap A(z_0, R/4, R/2)}}, u); \wh X^B_{\wh\tau^B_{D\cap A(z_0, R/4, R/2)}}\in \overline{D_{R/8}}] \nonumber \\
&\leq & \sup_{w\in D_{R/2}\setminus \overline{D_{R/8}}}\wh{G}^B_{D_{R/2}}(w, u)\P_y(\wh X^B_{\wh\tau^B_{D\cap A(z_0, R/4, R/2)}}\in D_{R/2})
\nonumber \\
&& \quad +\E_y\int_0^{\wh\tau^B_{D\cap A(z_0, R/4, R/2)}}\int_{\overline{D_{R/8}}} \wh{G}^B_{D_{R/2}}(w, u)\,\wh J^B(\wh X^B_s, w)\,m_B(dw)\,ds \nonumber \\
&=& \sup_{w\in D_{R/2}\setminus \overline{D_{R/8}}} G^B_{D_{R/2}}(u, w)\P_y(\wh X^B_{\wh\tau^B_{D\cap A(z_0, R/4, R/2)}}\in D_{R/2})
\nonumber \\
&& \quad +\E_y\int_0^{\wh\tau^B_{D\cap A(z_0, R/4, R/2)}}\int_{\overline{D_{R/8}}}
G^B_{D_{R/2}}(u, w)\, J^B(w, \wh X^B_s)\,m_B(dw)\,ds,
\end{eqnarray}
where in the last equality, we used $\wh{G}^B_{D_{R/2}}(w, u)=G^B_{D_{R/2}}(u, w)$ and $\wh J^B(\wh X^B_s, w)m_B(dw)=J^B(w, \wh X^B_s)m_B(dw).$
By assumption (A2), ${\bf (\wh{ED})} _{\leq, z_0, R}$  holds.
Thus there exists $c_3=c_3(z_0, R)>0$  such that for any $y\in D\cap A(z_0, 3R/8, R/2),$
\begin{equation}\label{e:2.5}
\P_y \Big(\wh X^B_{\wh\tau^B_{D\cap A(z_0, R/4, R/2)}}\in D_{R/2} \Big)
\leq c_3\E_y\wh\tau^B_{D\cap A(z_0, R/4, R/2)}.
\end{equation}
By assumption (A3), there exists $c_4=c_4(z_0, R)$ such that
$$
 \sup_{u\in D_{R/16}}\sup_{w\in D_{R/2}\setminus \overline{D_{R/8}}} G^B_{D_{R/2}}(u, w)\leq \sup_{u\in B(z_0, R/16)}\sup_{w\in A(z_0, R/8, R/2)} G^B(u, w)\leq c_4.
 $$
 Hence the first term on the right side of \eqref{e:2.2'} satisfies for $u\in D_{R/16},$
 \begin{equation}\label{e:2.9}
 \sup_{w\in D_{R/2}\setminus \overline{D_{R/8}}} G^B_{D_{R/2}}(u, w)\P_y(\wh X^B_{\wh\tau^B_{D\cap A(z_0, R/4, R/2)}}\in D_{R/2})\leq c_3c_4\E_y \left[ \wh\tau^B_{D\cap A(z_0, R/4, R/2)} \right].
 \end{equation}
For the second term on the right side of \eqref{e:2.2'}, we have for $u\in D_{R/16}$ and $y\in D\cap A(z_0, 3R/8, R/2),$
\begin{equation}\label{e:2.3'}\begin{aligned}
&\E_y\int_0^{\wh\tau^B_{D\cap A(z_0, R/4, R/2)}}\int_{\overline{D_{R/8}}}
G^B_{D_{R/2}}(u, w)\, J^B(w, \wh X^B_s)\,m_B(dw)\,ds\\
&\leq \E_y \left[ \wh\tau^B_{D\cap A(z_0, R/4, R/2)} \right]
 \sup_{ w\in \overline{D_{R/8}}, u_1\in D\cap A(z_0, R/4, R/2)} J^B( {w} , u_1)
\int_{\overline{D_{R/8}}} G^B_{B(z_0, R/2)}(u, w)\, m_B(dw)\\
&\leq \E_y \left[ \wh\tau^B_{D\cap A(z_0, R/4, R/2)} \right] \sup_{ w \in B(z_0, 3R/16), u_1\in A(z_0, R/4, R/2)}
J^B( { w} , u_1) \cdot \E_{u} \tau_{B(z_0, R/2)}\\
&\leq c_5\E_y \left[ \wh\tau^B_{D\cap A(z_0, R/4, R/2)} \right] \sup_{ u_1\in A(z_0, R/4, R/2)} J^B(z_0, u_1),
\end{aligned}\end{equation}
where the last inequality holds due to the condition \eqref{e:J2} of assumption (A4) and Lemma \ref{L:2.1n}.
By the condition\eqref{e:J3} of assumption (A4), we have
\begin{equation}\label{e:2.10}
\sup_{u_1\in A(z_0, R/4, R/2)} J^B(z_0, u_1):=c_6(z_0, R)<\infty.
\end{equation}
By \eqref{e:2.10} together with  \eqref{e:2.3'}, we have  for the second term on the right side of \eqref{e:2.2'},
\begin{equation}\label{e:2.12'}
\E_y\int_0^{\wh\tau^B_{D\cap A(z_0, R/4, R/2)}}\int_{\overline{D_{R/8}}}
G^B_{D_{R/2}}(u, w)\, J^B(w, \wh X^B_s)\,m_B(dw)\,ds
\leq c_5c_6\E_y \left[ \wh\tau^B_{D\cap A(z_0, R/4, R/2)} \right].
\end{equation}
Consequently, by \eqref{e:2.2'}, \eqref{e:2.9} and \eqref{e:2.12'},  we establish \eqref{e:2.3}.

By  \eqref{e:2.2} and \eqref{e:2.3}, we have for $x\in D_{R/64}$ and $y\in D\cap  A(z_0, 3R/8, R/2),$
\begin{equation}\label{e:2.12n}\begin{aligned}
I_1(x, y)&\leq c_1c_2\E_x \left[  \tau_{D_{R/32}} \right] \E_y  \left[ \wh\tau^B_{D\cap A(z_0, R/4, R/2)} \right] \\
&= c_1c_2\E_x \left[  \tau_{D_{R/32}} \right]
\int_{D\cap A(z_0, R/4, R/2)} \hat G^B_{D\cap A(z_0, R/4, R/2)}(y, w)\, m_B(dw)\\
&\leq c_1c_2 \E_x  \left[ \tau_{D_{R/32}} \right] \int_{D_{R/2}\setminus D_{R/16}} \hat G^B_{D_{R/2}}(y, w)\, m_B(dw)\\
&=c_1c_2\E_x \left[ \tau_{D_{R/32}} \right] \int_{D_{R/2}\setminus D_{R/16}} G^B_{D_{R/2}}(w, y)\, m_B(dw).
\end{aligned}\end{equation}

By the L\'evy system formula of $X$ and the conditions \eqref{e:J3} and \eqref{e:J2} of assumption (A4),  for $x\in D_{R/64}$  and $y\in D\cap  A(z_0, 3R/8, R/2),$
\begin{equation}\label{e:2.13n}\begin{aligned}
I_2(x, y)&=\E_x\int_0^{\tau_{D_{R/32}}}\int_{D_{R/2}\setminus D_{R/16}} G^B_{D_{R/2}}(w, y) J^B(X_s, w)\,m_B(dw)\,ds\\
&\leq \sup_{u\in D_{R/32}, w\in D_{R/2}\setminus D_{R/16}} J^B(u, w) \E_x \left[ \tau_{D_{R/32}} \right] \int_{D_{R/2}\setminus D_{R/16}} G^B_{D_{R/2}}(w, y)\, m_B(dw)\\
&\leq c_8\sup_{w\in B(z_0, R/2)\setminus B(z_0, R/16)} J^B(z_0, w) \E_x \left[ \tau_{D_{R/32}} \right] \int_{D_{R/2}\setminus D_{R/16}} G^B_{D_{R/2}}(w, y)\, m_B(dw)\\
&\leq c_9\E_x \left[ \tau_{D_{R/32}} \right] \int_{D_{R/2}\setminus D_{R/16}} G^B_{D_{R/2}}(w, y)\, m_B(dw),
\end{aligned}\end{equation}
where $c_k=c_k(z_0, R)\in (0, \infty), k=8, 9.$
By \eqref{e:2.4}, \eqref{e:2.12n} and \eqref{e:2.13n}, for $x\in D_{R/64}$ and $y\in D\cap A(z_0, 3R/8, R/2),$
\begin{equation}\label{e:2.9'}
G^B_{D_{R/2}}(x, y)=I_1(x, y)+I_2(x, y)\leq (c_1c_2+c_9)\E_x \left[ \tau_{D_{R/32}} \right]\int_{D_{R/2}\setminus D_{R/16}} G^B_{D_{R/2}}(w, y)\, m_B(dw).
\end{equation}
That is, the upper bound of \eqref{e:2.2n} holds.

On the other hand, by the L\'evy system formula of $X$ and the condition \eqref{e:J2} of assumption (A4),  for $x\in D_{R/64}$ and  $y\in D\cap A(z_0, 3R/8, R/2),$
\begin{equation}\label{e:2.14n}\begin{aligned}
G^B_{D_{R/2}}(x, y)&=\E_x [G^B_{D_{R/2}}(X_{\tau_{D_{R/32}}}, y)]\\
&\geq \E_x [G^B_{D_{R/2}}(X_{\tau_{D_{R/32}}}, y); X_{\tau_{D_{R/32}}}\in D_{R/2}\setminus D_{R/16}]\\
&=\E_x\int_0^{\tau_{D_{R/32}}}\int_{D_{R/2}\setminus D_{R/16}} G^B_{D_{R/2}}(w, y) J^B(X_s, w)\,m_B(dw)\,ds\\
&\geq \inf_{u\in D_{R/32}, w\in D_{R/2}\setminus D_{R/16}} J^B(u, w) \E_x \left[ \tau_{D_{R/32}} \right] \int_{D_{R/2}\setminus D_{R/16}} G^B_{D_{R/2}}(w, y)\, m_B(dw)\\
&\geq c_{10}\inf_{w\in B(z_0, R/2)\setminus B(z_0, R/16)} J^B(z_0, w)\cdot \E_x \left[ \tau_{D_{R/32}} \right]\int_{D_{R/2}\setminus D_{R/16}} G^B_{D_{R/2}}(w, y)\, m_B(dw).
\end{aligned}\end{equation}
Note that by the condition  \eqref{e:J3} of assumption (A4),
$$
c_{11}(z_0, R):=\inf_{w\in B(z_0, R/2)\setminus B(z_0, R/16)} J^B(z_0, w)>0.
$$
Hence by this together with \eqref{e:2.14n}, the lower bound of \eqref{e:2.2n} holds.
Consequently, \eqref{e:2.2n} holds.

 Furthermore, if  assumptions (A2')-(A4') and (A5) hold, then  the constants in (A2)-(A4)
hold uniformly for $x_0\in \sX$.
By assumption (A5), for each $R\in (0, R_0/8),$ one has
\begin{equation}\label{e:2.15}
\sup_{x\in B(x_0, R/2), x_0\in \sX} \E_x \left[ \tau_{B(x_0, R)} \right]\leq \sup_{x\in \sX} \E_x \left[ \tau_{B(x, 2R)} \right]\leq c
\end{equation}
 for some constant $c=c(R)>0$. Hence under this assumption, the constant $C$ in \eqref{e:2.1} of Lemma \ref{L:2.1n} holds uniformly for $x_0\in \sX$.
  By carefully tracking all
  the constants in the proof,  one finds that
  the constant $C$ in \eqref{e:2.2n}  depends only on $R.$ The proof is complete.
\qed

\smallskip

By Lemma \ref{L:3.2}, we immediately obtain the following proposition.

\begin{prp}\label{P:2.3}
Suppose assumptions (A1) and (A2)-(A4) hold for some $R_0>0$.
 Let $D$ be an  open subset of $\sX$. For each
 $B=B(z_0, R)$ with $z_0\in \partial D$ and  $R\in (0, R_0/8)$ so that $\overline {B}\subsetneq \sX,$
there exists $C=C(z_0, R)>0$ depending on the constants in (A2)-(A4) for the ball $B(z_0, R)$
such that for any
$x_k\in D\cap B(z_0,  R/64)$ with $k=1,2$ and $y\in D\cap  A(z_0, 3R/8, R/2), $
\begin{equation}\label{e:3.2'}\begin{aligned}
&C^{-1}G^B_{D\cap B(z_0, R/2)}(x_2, y) \dfrac{\E_{x_1} \left[ \tau_{D\cap B(z_0, R/32)} \right]}{\E_{x_2} \left[ \tau_{D\cap B(z_0, R/32)}  \right]}\leq G^B_{D\cap B(z_0, R/2)}(x_1, y)\\
&\qquad  \leq C \dfrac{\E_{x_1} \left[ \tau_{D\cap B(z_0, R/32)} \right]}{\E_{x_2} \left[ \tau_{D\cap B(z_0, R/32)}  \right]}G^B_{D\cap B(z_0, R/2)}(x_2, y).
\end{aligned}\end{equation}
 If we further assume  (A2')-(A4') and (A5) hold,  then the constant in \eqref{e:3.2'} depends only on $R.$
\end{prp}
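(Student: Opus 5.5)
The plan is to apply Lemma \ref{L:3.2} twice, once at $x_1$ and once at $x_2$, and then divide one estimate by the other. Fix $z_0\in\partial D$, $R\in(0,R_0/8)$ and $y\in D\cap A(z_0,3R/8,R/2)$. Set
\[
K(y):=\int_{D\cap (B(z_0,R/2)\setminus B(z_0,R/16))} G^B_{D\cap B(z_0,R/2)}(u,y)\,m_B(du).
\]
The quantity $K(y)$ does not depend on the point $x\in D\cap B(z_0,R/64)$. Lemma \ref{L:3.2} therefore states that, for each $k=1,2$ and with $C_0=C(z_0,R)$ the constant of \eqref{e:2.2n},
\[
C_0^{-1}\,\E_{x_k}\big[\tau_{D\cap B(z_0,R/32)}\big]\,K(y)\le G^B_{D\cap B(z_0,R/2)}(x_k,y)\le C_0\,\E_{x_k}\big[\tau_{D\cap B(z_0,R/32)}\big]\,K(y).
\]

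We first note that $\E_{x_2}[\tau_{D\cap B(z_0,R/32)}]>0$, because $x_2$ lies in the open set $D\cap B(z_0,R/32)$ and $X$ is right continuous, so $\tau>0$ $\P_{x_2}$-a.s. We also need this expectation to be finite, which follows from Lemma \ref{L:2.1n} by domination with $\tau_{B(z_0,R)}$. The ratio in \eqref{e:3.2'} is therefore a well-defined positive finite number.

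For the upper bound, we use the lower half of the two-sided estimate at $x_2$ to get
\[
K(y)\le C_0\,G^B_{D\cap B(z_0,R/2)}(x_2,y)\big/\E_{x_2}\big[\tau_{D\cap B(z_0,R/32)}\big].
\]
Substituting this into the upper half at $x_1$ gives the right-hand inequality of \eqref{e:3.2'} with constant $C_0^2$. The lower bound comes symmetrically: we combine the upper half at $x_2$, which bounds $K(y)$ from below by $C_0^{-1}G^B_{D\cap B(z_0,R/2)}(x_2,y)/\E_{x_2}[\tau_{D\cap B(z_0,R/32)}]$, with the lower half at $x_1$. If $K(y)=0$, which covers in particular the degenerate case $D\cap A(z_0,3R/8,R/2)=\varnothing$ treated in Lemma \ref{L:3.2}, then both Green function values vanish and \eqref{e:3.2'} holds trivially. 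So we may assume $K(y)\in(0,\infty]$. The case $K(y)=\infty$ is excluded because $G^B_{D\cap B(z_0,R/2)}(x_k,y)<\infty$; this finiteness is the bound \eqref{e:2.3} obtained in the proof of Lemma \ref{L:3.2}.

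Under the additional assumptions (A2')–(A4') and (A5), Lemma \ref{L:3.2} gives $C_0=C_0(R)$, so the constant $C=C_0^2$ also depends only on $R$. The only point requiring care is checking that the ratio of exit times is nondegenerate and that $K(y)$ is finite; the rest is an immediate algebraic consequence of Lemma \ref{L:3.2}.
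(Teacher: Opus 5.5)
Your proposal is correct and follows the paper's route. The paper derives Proposition~\ref{P:2.3} ``immediately'' from Lemma~\ref{L:3.2}, just as you do: it applies \eqref{e:2.2n} at $x_1$ and at $x_2$ and eliminates the common factor $\int_{D\cap (B(z_0,R/2)\setminus B(z_0,R/16))}G^B_{D\cap B(z_0,R/2)}(u,y)\,m_B(du)$, which gives $C=C_0^2$.

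One small correction concerns the finiteness of $G^B_{D\cap B(z_0,R/2)}(x_k,y)$. It follows from (A3) via $G^B_{D\cap B(z_0,R/2)}\le G^B$, as used in the proof of Lemma~\ref{L:3.2}, not from \eqref{e:2.3}, whose right-hand side $\E_y \wh\tau^B_{D\cap A(z_0,R/4,R/2)}$ is not shown to be finite there. In any case you need not exclude $K(y)=\infty$, since with $\E_{x_k}[\tau_{D\cap B(z_0,R/32)}]\in(0,\infty)$ the inequalities already hold in $[0,\infty]$.
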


\begin{lem}\label{L:2.5}
Suppose assumptions (A2) and (A4) hold for some $R_0>0$.
Let $D$ be an  open subset of $\sX$.
Then for each $z_0\in \partial D$ and $R\in (0, R_0/16),$    there exists $C=C(z_0, R)>1$   such that
\begin{equation}\label{e:2.12}
\E_x\tau_{D\cap B(z_0, R)}\leq C \, \E_x\tau_{D\cap B(z_0, R/2)}, \quad x\in D\cap B(z_0, R/4).
\end{equation}
 If  assumptions (A2') and (A5) hold, then the multiplicative constant $C$ in \eqref{e:2.12} depends only on $R.$
  \end{lem}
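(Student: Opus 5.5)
The plan is to use the strong Markov property at the exit time from the smaller set. This reduces \eqref{e:2.12} to two facts. First, the probability of exiting $D\cap B(z_0,R/2)$ into $D$ is controlled by the expected exit time; this is condition ${\bf (ED)}$ at a suitably enlarged scale. Second, expected exit times from balls of radius comparable to $R$ are bounded; this is Lemma \ref{L:2.1n}.

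Write $D_s:=D\cap B(z_0,s)$ and fix $x\in D_{R/4}$. Since $\tau_{D_{R/2}}\le \tau_{D_R}$, the strong Markov property gives
$$
\E_x\tau_{D_R}=\E_x\tau_{D_{R/2}}+\E_x\Big[\E_{X_{\tau_{D_{R/2}}}}\tau_{D_R};\,X_{\tau_{D_{R/2}}}\in D_R\Big].
$$
The integrand vanishes when $X_{\tau_{D_{R/2}}}\notin D_R$. Hence the second term is at most
$$
\Big(\sup_{w\in B(z_0,R)}\E_w\tau_{B(z_0,2R)}\Big)\cdot\P_x\big(X_{\tau_{D_{R/2}}}\in D\big),
$$
where we used $\tau_{D_R}\le\tau_{B(z_0,2R)}$.

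For the supremum, apply Lemma \ref{L:2.1n} with centre $z_0$ and radius $2R$. This is admissible because $2R<R_0/8$ when $R<R_0/16$, and it gives a bound $c(z_0,R)$ for all $w\in B(z_0,R)$.

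For the exit probability, apply ${\bf (ED)}_{\le,z_0,R'}$ with $R'=16R$. This requires $R'<R_0$, which holds, and $\overline{B(z_0,16R)}\subsetneq\sX$. The latter is implicit in the standing conventions; if it fails, one instead works with the largest admissible scale below $R_0$, and I expect this to be the only delicate point. Take $U:=D\cap B(z_0,8R)\subset 2^{-1}B(z_0,R')$. Then $B_U(z_0,R'/32)=D_{R/2}$ and $x\in B_U(z_0,R'/64)=D_{R/4}$, so \eqref{e:1.5} gives
$$
\P_x\big(X_{\tau_{D_{R/2}}}\in D\big)=\P_x\big(X_{\tau_{D_{R/2}}}\in U\big)\le C_1\E_x\tau_{D_{R/2}}.
$$
The equality holds because the process exits $D_{R/2}$ only into $D\setminus B(z_0,R/2)$ or into $D^c$, and jumps landing in $D$ but far away are also in $U$ only if they land within $B(z_0,8R)$. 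So strictly one should read the displayed bound on $\P_x(X_{\tau_{D_{R/2}}}\in D_R)$, using $D_R\subset U$, which is all we need. Combining the three displays gives $\E_x\tau_{D_R}\le(1+cC_1)\E_x\tau_{D_{R/2}}$.

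For the uniform statement, under (A2') the constant $C_1$ depends only on $R'=16R$. Under (A5), $\sup_{w\in B(z_0,R)}\E_w\tau_{B(z_0,2R)}\le\sup_w\E_w\tau_{B(w,3R)}\le c(R)$, as in \eqref{e:2.15}. Hence $C$ depends only on $R$. The main obstacle is just matching the scales in ${\bf (ED)}$ (choosing $R'=16R$) so that the exit set is exactly $D_{R/2}$ and the starting region is exactly $D_{R/4}$.
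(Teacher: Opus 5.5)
Your proof is correct and matches the paper's argument: strong Markov at $\tau_{D\cap B(z_0,R/2)}$, Lemma \ref{L:2.1n} at radius $2R$ to bound $\sup_{w\in B(z_0,R)}\E_w\tau_{B(z_0,2R)}$, and ${\bf (ED)}_{\leq,z_0,16R}$ with $U=D\cap B(z_0,8R)$ (the paper writes this as $\P_x(X_{\tau_{D_{R/2}}}\in D_{8R})$), with (A2') and (A5) giving uniformity. Two small points: state the middle bound with $D_R$ in place of $D$ from the start, since the identity $\P_x(X_{\tau_{D_{R/2}}}\in D)=\P_x(X_{\tau_{D_{R/2}}}\in U)$ is false in general; and the paper makes the same tacit assumption $\overline{B(z_0,16R)}\subsetneq\sX$, so your ``largest admissible scale'' fallback is unnecessary (and would break the exact matching of scales).
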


 \pf  Let $z_0\in \partial D$ and $R\in (0, R_0/16).$ For notational  simplicity, for each $R>0,$ let $D_R(z_0):=D\cap B(z_0, R).$
By the strong Markov property of $X,$ for  $x\in D_{R/4}(z_0)$,
$$
\E_x\tau_{D_R(z_0)}= \E_x \tau_{D_{R/2}(z_0)}+ \E_x  \left[\E_{X_{\tau_{D_{R/2}(z_0)}}} \tau_{D_R(z_0)} ; X_{\tau_{D_{R/2}(z_0)}}\in D_R(z_0) \right].
 $$
Note that $R\in (0, R_0/16).$ Then by assumption (A2) with $R_0>0,$   ${\bf (ED)} _{\leq, z_0, 16R}$   holds.
We have for $x\in D_{R/4}(z_0),$
\begin{eqnarray*}
\E_x\tau_{D_R(z_0)}
 &\leq &\E_x \tau_{D_{R/2}(z_0)}+ \E_x  \left[ \sup_{z\in D_R(z_0)} \E_{z} \tau_{D_R(z_0)} ; X_{\tau_{D_{R/2}(z_0)}}\in D_R(z_0) \right] \\
&\leq & \E_x \tau_{D_{R/2}(z_0)}+\P_x(X_{\tau_{D_{R/2}(z_0)}}\in D_{8R}(z_0))  \, \sup_{z\in B(z_0, R)} \E_{z} \tau_{B(z_0, 2R)}  \\
&\leq & \E_x \tau_{D_{R/2}(z_0)}+ c_1\E_x \tau_{D_{R/2}(z_0)}  \\
 &\leq & (c_1+1) \E_x \tau_{D_{R/2}(z_0)},
\end{eqnarray*}
where $c_1=c_1(z_0, R)$ and   Lemma \ref{L:2.1n} is used in the third line. Hence \eqref{e:2.12} holds.

 If assumptions (A2') and  (A5) hold, then for each $R\in (0, R_0/16),$ ${\bf (ED)^s} _{\leq, 16R}$   holds
and by \eqref{e:2.15}, $\sup_{z\in B(z_0, R)} \E_{z} \tau_{B(z_0, 2R)}\leq c$ for a positive constant $c=c(R)>0$ independent of $z_0\in \partial D.$
Thus in this case, the  constant $c_1$ above depends only on $R.$ Hence the multiplicative constant in \eqref{e:2.12} depends only on $R.$
\qed

\smallskip

 As we only assume the weak duality assumption in each ball $B$ with respect to the reference measure $m_B$ in assumption (A1), in the following proposition, we adapt the representation formula of regular harmonic functions for discontinuous Hunt process in open sets  vanishing in part of the boundary of open sets in Chen and Wang \cite{CW2} to the subprocess $X^B.$

\begin{prp}\label{P:2.2}
Suppose assumption (A1) holds. Let $B$ be a ball in $\sX$
so that $\overline{B}\subsetneq \sX$,
and  $D$ be an open subset of  $\sX$.  Let  $U$ and $V$ be relatively compact  open sets in $B$ with $\overline V  \subseteq U$ and $D\cap V\neq \varnothing.$
  Suppose $h$ is a nonnegative  regular   harmonic function with respect to $X^B$in $D\cap U$ and
    vanishes on $(D^c)^r\cap U$.
 Then
 there exist a sequence of Radon measures $\{\mu_n; n\geq 1\}$ on $(D\cap U)\setminus V$
  so that     $G^B_{D\cap U}\mu_n(x):= \int_{(D\cap U)\setminus V} G^B_{D\cap U}(x, y) \mu_n (dy)$   is uniformly bounded on $D\cap V$ and increasing   in $n\geq 1$,  and for $x\in   D\cap V,$
  \begin{eqnarray}\label{e:2.5a}
h(x)& = &  \lim_{n\to \infty} \int_{(D\cap U)\setminus  V} G^B_{D\cap U}(x, y)\mu_n(dy) \nonumber\\
&&  + \int_{D\cap U} G^B_{D\cap U}(x,y)\int_{B\setminus U}h(z) J(y, dz)\,m_B(dy).
\end{eqnarray}
\end{prp}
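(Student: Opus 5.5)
Write $W:=D\cap U$. We decompose $h$ according to the position of $X$ at the exit time $\tau_W$. Since $h$ is regular harmonic for $X^B$ in $W$, we have $h(x)=\E_x[h(X^B_{\tau_W})]$ for $x\in W$. The exit position $X^B_{\tau_W}$ falls into one of four places: $\partial$ (or outside $B$), where $h(X^B)=0$; the set $(D^c\cap U)$; the set $\partial W\cap U$ reached continuously; or the set $B\setminus U$. By Hunt's hypothesis (A1), a.s. the exit point in $D^c\cap U$ can be taken in $(D^c)^r\cap U$, up to polar sets. Precisely, $X_{\tau_W}\in D^c$ implies that $X_{\tau_W}$ is a regular point of $D^c$, except on a set of measure zero, since $(D^c)\setminus(D^c)^r$ is semipolar and hence polar. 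So $h$ vanishes there. The remaining contributions are the jumps from $W$ directly into $B\setminus U$, and the mass arriving in $U\cap \overline{D}\setminus$ (these are handled by $h=0$).

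For the jump part, the Lévy system formula for $X^B$ gives
\[
\E_x[h(X^B_{\tau_W});X^B_{\tau_W}\in B\setminus U]=\E_x\int_0^{\tau_W}\int_{B\setminus U}h(z)J(X_s,dz)\,ds,
\]
and by Definition \ref{De:1}(i) this equals $\int_W G^B_W(x,y)\int_{B\setminus U}h(z)J(y,dz)\,m_B(dy)$, which is the second term in \eqref{e:2.5a}. Define
\[
h_1(x):=h(x)-\int_W G^B_W(x,y)\int_{B\setminus U}h(z)J(y,dz)\,m_B(dy).
\]
Then $h_1=\E_x[h(X_{\tau_W});X_{\tau_W}\in U]$. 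This function is excessive for $X^W$, being the "killed" version of $h$ restricted to exits inside $U$. It is also harmonic in $D\cap V'$ for every intermediate open set $V'$ with $\overline V\subset V'\subset\overline{V'}\subset U$, since exits into $U$ from $D\cap V$ have $h=0$ on $D^c$. Hence $h_1$ is bounded on $D\cap V$, by finiteness of regular harmonic functions along with monotonicity.

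Next we represent $h_1$ as a potential. Following \cite{CW2}, we use the Riesz decomposition for the excessive function $h_1$ of $X^W$, which admits the weak dual $\wh X^{B,W}$ by (A1) and \cite[Theorem 13.25]{ChungW}. Choose open sets $V_n\downarrow\overline V$ with $\overline{V_{n+1}}\subset V_n\subset U$. Take the balayage (réduite) $R_n:=\E_x[h_1(X_{T_{W\setminus V_n}})]$ of $h_1$ on $W\setminus V_n$. This function is excessive and is a potential for $X^W$. By Hunt's hypothesis and the standard theory (e.g.\ \cite[VI]{BG} and \cite{ChungW}), it equals $G^B_W\mu_n$ for a measure $\mu_n$ supported on $\overline{W\setminus V_n}\subset W\setminus V$. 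On $D\cap V$, the hitting of $W\setminus V_n$ before $\tau_W$ is necessary to exit within $U$. As $n\to\infty$ these hitting times decrease to the first hitting of $W\setminus V$, so $R_n\uparrow h_1$ on $D\cap V$ by quasi-left continuity. This gives monotonicity and the uniform bound $G^B_W\mu_n\le h_1$, which is bounded there.

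The main obstacle is the identification of the réduite $R_n$ as a Green potential $G^B_W\mu_n$ with a Radon measure $\mu_n$ carried off $V$. This step requires the weak duality with respect to $m_B$ and the excessiveness of $G^B_W(x,\cdot)$ for the dual process. It also requires the fact that $h_1$ is finite and is a potential; that is, $h_1$ has no harmonic part, because it vanishes at the boundary of $W$ inside $U$. I would first try to invoke the representation theorem for potentials under duality (\cite[Theorem 13.2]{ChungW}, \cite{KW}) directly on the réduite, mirroring the proof of (2.18) in \cite{CW2}.
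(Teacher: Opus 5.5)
The paper does not reprove this. It checks three things: that $X^B$ is transient (from irreducibility and $\overline B\subsetneq\sX$, via \cite[Lemma 2.3]{CW2}); that $D^c\setminus(D^c)^r$ is polar by Hunt's hypothesis; and that $X^B$ has L\'evy system $(J^B,dt)$. It then runs \cite[Proposition 2.4]{CW2} for $X^B$.

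Your reconstruction of that argument has a genuine gap at the first step. The identity
$$\E_x[h(X^B_{\tau_W});X^B_{\tau_W}\in B\setminus U]=\E_x\int_0^{\tau_W}\int_{B\setminus U}h(z)J(X_s,dz)\,ds$$
is false for a process with a diffusive part. The set $B\setminus U$ contains $\partial U\cap B$, $X$ can leave $W=D\cap U$ continuously through $\partial U$, and the L\'evy system only accounts for exits with $X_{\tau_W-}\neq X_{\tau_W}$. Consequently your description $h_1=\E_x[h(X_{\tau_W});X_{\tau_W}\in U]$ is wrong. That quantity is in fact identically $0$ by the polarity argument you yourself gave: an exit position in $U$ lies in $U\cap D^c$, hence a.s.\ in $(D^c)^r$, where $h=0$. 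So your formula would reduce $h$ to the jump term alone, which is false in general. The correct remainder is $h_1(x)=\E_x[h(X_{\tau_W});X_{\tau_W}\in\partial U,\ X_{\tau_W-}=X_{\tau_W}]$, the contribution of continuous exits through $\partial U$.

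With this $h_1$, your second step also fails. This $h_1$ is harmonic for $X^W$ in all of $W$. Indeed, for $W'$ open with compact closure in $W$, a continuous exit through $\partial U$ forces $\tau_{W'}<\tau_W$, so $\E_x h_1(X^W_{\tau_{W'}})=h_1(x)$. Hence $h_1$ is not a potential unless it vanishes.

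Your sets $W\setminus V_n$ contain a whole relative neighbourhood of $\partial U$ in $W$. For $x\in W\setminus\overline{V_n}$ one has $T_{W\setminus V_n}=0$, and on $D\cap V_n$ harmonicity gives $R_n=h_1$ as well. So $R_n$ agrees with $h_1$ near $\partial U$ and inherits its nonzero harmonic part. It therefore cannot equal $G^B_W\mu_n$ for any measure, and the claimed increase ``$R_n\uparrow h_1$'' is vacuous.

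The approximation has to run the other way. Sweep (a truncation of) $h_1$ onto sets $K_n\subset(D\cap U)\setminus V$ bounded away from $\partial U$, e.g.\ compact $K_n\uparrow(D\cap U)\setminus V$.
\begin{itemize}
\item Then $R_{K_n}(h_1\wedge n)\le n\,\P_x(T_{K_n}<\tau_W)$, which is a potential of the transient process $X^W$. Transience is something your sketch never establishes.
\item By the duality in (A1), $R_{K_n}(h_1\wedge n)=G^B_W\mu_n$ with $\mu_n$ carried by $K_n$.
\item For $x\in D\cap V$, on $\{X_{\tau_{D\cap V}}\in W\}$ one has $T_{K_n}=\tau_{D\cap V}$ for all large $n$. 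Regular harmonicity of $h_1$ in $D\cap V$ then gives $G^B_W\mu_n\uparrow h_1$ there.
\end{itemize}
This is exactly why \eqref{e:2.5a} involves a limit of potentials rather than a single one. Separately, your assertion that $h_1$ is bounded on $D\cap V$ ``by finiteness and monotonicity'' is not justified as stated.
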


\pf
 Since $X$ is irreducible  and   $\overline{B}\subsetneq \sX$, by  Lemma 2.3 in \cite{CW2}, the semigroup of $X^B$  is transient  in the sense of
  \cite[Definition on page 86]{ChungW} that there exists a strictly positive function $g_0$ on $B$ such that  $0<G^B g_0(x) <\infty$  for every $x\in B.$
  It is well known  (see e.g. \cite[Proposition (3.3) on p.80]{BG})   that  $ D^c\setminus (D^c)^r$ is semi-polar for the process $X$.
Hence it is polar by  Hunt's hypothesis; that is,
$\P_y(X_t  \mbox{ ever hits } D^c\setminus (D^c)^r)=0$ for all $y\in \sX.$
Thus, $\P_y(X^B_t  \mbox{ ever hits } D^c\setminus (D^c)^r)=0$ for all $y\in B.$
Note that the subprocess $X^B$ has a L\'evy system $(J^B(x, dy), dt),$ where
$J^B(x, dy)=J(x, dy)$ on $B\times B.$
Then by the argument above,  assumption (A1) and  an argument similar to that of Proposition 2.4 in \cite{CW2}  but with $X^B$ in place of $X$ there,   we get the desired conclusion.
  \qed

\begin{proof}[Proof of Theorem \ref{T0}.]
Suppose that assumptions (A1) and (A2)-(A4)  hold for some $R_0>0$.
Let $D$ be an  open subset of $\sX$.
Let $z_0\in \partial D$  and $R\in (0, \tfrac{1}{16}R_0)$
so that $\overline{B(z_0, 2R)}\subsetneq \sX$. Let $B:=B(z_0, R).$
For each $\lambda >0,$  we use $\lambda B$ to denote the concentric ball $B(z_0, \lambda R).$
For the simplicity of notation, for each $s>0,$ let $D_s:=D\cap B(z_0, s).$
 Let  $h$ be a  nontrivial   nonnegative  regular harmonic function in $D_R$ with respect to $X$ vanishing on $(D^c)^r\cap B.$

 Let
\begin{equation}
\tilde h(x):=\E_x  \left[h(X_{\tau_{D_R}}); X_{\tau_{D_R}}\in 2B \right]=\E_x  \left[h(X_{\tau_{D_R}}); \tau_{2B}>\tau_{D_R} \right] \quad
 \hbox{for } x\in 2B.
\end{equation}
Note that $D_R\subset B$, so $\tilde h$ is a non-negative regular harmonic function in $D_R$ with respect to the part process $X^{2B}$ vanishing on $(D^c)^r\cap B.$
By applying Proposition \ref{P:2.2} with $X^{2B}, B$ and $\tfrac{7}{8}B$ in place of $X^B, U$ and $V$ there, there exist a sequence of Radon measures $\{\mu_n; n\geq 1\}$ on $D_R\setminus D_{7R/8}$
  so that
  $$
  G^{2B}_{D_R}\mu_n(x):= \int_{D_R\setminus D_{7R/8}} G^{2B}_{D_R}(x, y) \mu_n (dy)
  $$
    is uniformly bounded on $D_{7R/8}$ and increasing   in $n\geq 1$,
and for $x\in   D_{R/32},$
  \begin{eqnarray}\label{e:2.6a}
\tilde h(x)& = &  \lim_{n\to \infty} \int_{D_R\setminus D_{7R/8}} G^{2B}_{D_R}(x, y)\mu_n(dy) \nonumber\\
&&  + \int_{D_R} G^{2B}_{D_R}(x,y)\int_{(2B)\setminus B}\tilde h(z) J(y, dz)\,m_{2B}(dy)\nonumber\\
& = &  \lim_{n\to \infty} \int_{D_R\setminus D_{7R/8}} G^{2B}_{D_R}(x, y)\mu_n(dy) \nonumber\\
&&+\int_{D_R\setminus D_{7R/8}}\int_{(2B)\setminus B} G^{2B}_{D_R}(x, y)  \tilde h(z)J^{2B}(y, z) \, m_{2B}(dz)\,m_{2B}(dy) \nonumber\\
&&+\int_{D_{7R/8}}\int_{(2B)\setminus B} G^{2B}_{D_R}(x, y)  \tilde h(z)J^{2B}(y, z) \, m_{2B}(dz)\,m_{2B}(dy) \nonumber\\
&:=& \tilde h_1(x)+\tilde h_2(x)+\tilde h_3(x).
\end{eqnarray}

By  Proposition \ref{P:2.3} but with $2B$ and $2R$ in place of $B$ and $R$ there,
 there exists $c_1=c_1(z_0, R)>1$ such that for any $x_1, x_2\in D_{R/32},$
\begin{equation}\label{e:2.11}
c_1^{-1}\dfrac{\E_{x_1} \tau_{D_{R/16}}}{\E_{x_2} \tau_{D_{R/16}}} \, \tilde h_k(x_2)\leq \tilde h_k(x_1)\leq c_1\dfrac{\E_{x_1} \tau_{D_{R/16}}}{\E_{x_2} \tau_{D_{R/16}}} \, \tilde h_k(x_2), \quad k=1, 2.
\end{equation}
By the condition \eqref{e:J2} of assumption (A4) for $R_0$, there exists $c_2=c_2(z_0, R)>0$ such that for $x\in D_{R/32},$
$$\begin{aligned}
\tilde h_3(x)&=\int_{D_{7R/8}}\int_{(2B)\setminus B} G^{2B}_{D_R}(x, y) \tilde h(z)J^{2B}(y, z)\, m_{2B}(dz)\,m_{2B}(dy)\\
&\leq c_2\int_{(2B)\setminus B}\Big[\int_{D_{7R/8}} G^{2B}_{D_R}(x, y)\,m_{2B}(dy)\Big] \tilde h(z)J^{2B}(z_0, z)\, m_{2B}(dz)\\
&\leq c_2\E_x \tau_{D_R}\int_{(2B)\setminus B}\tilde h(z)J^{2B}(z_0, z)m_{2B}(dz).
\end{aligned}$$
On the other hand, by the condition \eqref{e:J2} of assumption (A4), there exists $c_3=c_3(z_0, R)>0$ such that for $x\in D_{R/32},$
$$\begin{aligned}
\tilde h_3(x)&=\int_{D_{7R/8}}\int_{(2B)\setminus B} G^{2B}_{D_R}(x, y) \tilde h(z)J^{2B}(y, z)\, m_{2B}(dz)\,m_{2B}(dy)\\
&\geq c_3\int_{(2B)\setminus B}\Big[\int_{D_{7R/8}} G^{2B}_{D_R}(x, y)\,m_{2B}(dy)\Big] \tilde h(z)J^{2B}(z_0, z)\, m_{2B}(dz)\\
&\geq c_3\E_x \tau_{D_{7R/8}}\int_{(2B)\setminus B}\tilde h(z)J^{2B}(z_0, z)\, m_{2B}(dz).
\end{aligned}$$
By  Lemma \ref{L:2.5}, there exists $c_4=c_4(z_0, R)>0$ such that for $x\in D_{R/32},$
$$\E_x \tau_{D_{7R/8}}\leq \E_x \tau_{D_R}\leq c_4\E_x \tau_{D_{R/2}}\leq c_4\E_x \tau_{D_{7R/8}}.$$
Hence, for any $x_1, x_2\in D_{R/32},$
\begin{equation}\label{e:2.13}
\dfrac{c_3}{c_2c_4}\dfrac{\E_{x_1} \tau_{D_R}}{\E_{x_2} \tau_{D_R}}\tilde h_3(x_2)\leq \tilde h_3(x_1)\leq \dfrac{c_2c_4}{c_3}\dfrac{\E_{x_1} \tau_{D_R}}{\E_{x_2} \tau_{D_R}}\tilde h_3(x_2).
\end{equation}
By using Lemma \ref{L:2.5} again,
there exists $c_5=c_5(z_0, R)>0$ such that for $x\in D_{R/32},$
$$\E_x \tau_{D_{R/16}}\leq \E_x \tau_{D_R}\leq c_5\E_x \tau_{D_{R/16}}.$$
By combining this inequality and \eqref{e:2.6a}-\eqref{e:2.13}, we get there exists $c_6=c_6(z_0, R)>1$ such that for $x_1, x_2\in D_{R/32},$
\begin{equation}\label{e:2.14}
c_6^{-1}\dfrac{\E_{x_1} \tau_{D_R}}{\E_{x_2} \tau_{D_R}}\tilde h(x_2)\leq \tilde h(x_1)\leq c_6\dfrac{\E_{x_1} \tau_{D_R}}{\E_{x_2} \tau_{D_R}}\tilde h(x_2).
\end{equation}

By the L\'evy system formula of $X$,
$$
g(x):=h(x)-\tilde h(x)=\E_x  \left[h(X_{\tau_{D_R}}); X_{\tau_{D_R}}\in (2B)^c \right]=\E_x\int_0^{\tau_{D_R}}\int_{(2B)^c} h(z)J(X_s, dz)\,ds.
$$
By the condition \eqref{e:J1} of assumption (A4), there exists $c_7=c_7(z_0, R)>1$ such that for $u\in D_R,$
$$c^{-1}_7 J(z_0, dz)\leq J(u, dz)\leq c_7 J(z_0, dz) \quad \mbox{on} \: (2B)^c.$$
Hence for $x\in D_{R/32},$
$$\begin{aligned}
c^{-1}_7\E_x\tau_{D_R}\int_{(2B)^c} h(z)J(z_0, dz)\leq g(x)\leq c_7\E_x\tau_{D_R}\int_{(2B)^c} h(z)J(z_0, dz).
\end{aligned}$$
Thus for $x_1, x_2\in D_{R/32},$
\begin{equation}\label{e:2.22}
c_7^{-2}\dfrac{\E_{x_1} \tau_{D_R}}{\E_{x_2} \tau_{D_R}}g(x_2)\leq g(x_1)\leq c_7^2\dfrac{\E_{x_1} \tau_{D_R}}{\E_{x_2} \tau_{D_R}}g(x_2).
\end{equation}
Note that $h(x)=\tilde h(x)+g(x).$ Then by \eqref{e:2.14} together with \eqref{e:2.22},  there exists $c_8=c_8(z_0, R)>1$ such that for $x_1, x_2\in D_{R/32},$
\begin{equation}\label{e:2.23}
c_8^{-1}\dfrac{\E_{x_1} \tau_{D_R}}{\E_{x_2} \tau_{D_R}}h(x_2)\leq h(x_1)\leq c_8\dfrac{\E_{x_1} \tau_{D_R}}{\E_{x_2} \tau_{D_R}}h(x_2).
\end{equation}
The BHP \eqref{e:1} follows and the constant in \eqref{e:1} depends on the constants in (A2)-(A4).

 Furthermore, assume  that (A1), (A2')-(A4') and (A5) hold.  Then by Proposition \ref{P:2.3}, Lemma \ref{L:2.5} and by carefully tracking the constants in the proof, the constant in \eqref{e:2.23} depends only on $R.$ The proof is complete.
\end{proof}

\begin{remark} \rm
By \eqref{e:2.23}, for any  nonnegative  regular harmonic function $h$
on $D\cap B(z_0, R)$   vanishing  on  $(D^c)^r\cap B(z_0, R),$  the boundary decay rate for $h$ at $\partial D\cap B(z_0,  R)$ is $\E_x \tau_{D\cap B(z_0, R)}$.

\end{remark}

\section{ Exit distribution estimates}\label{S:3}

In this section, we will prove Proposition \ref{P1},  which provides some sufficient conditions  for
 assumption (A2).
  In view of \cite[Proposition 3.1]{CC},  ${\rm \bf (Jt)}_{\leq, R, \gamma}$  and  ${\bf (EP)} _{\leq, R, \gamma}$  for some $\gamma>0$
  yields  ${\bf (ED)}^s _{\leq, R}$ holds.
 Hence it suffices to show
  that for each $R>0,$
  if {\rm ${\bf (\wh{EP})} _{\leq , R, \gamma}$} and
${\bf (\wh{Jt})}_{\leq, R, \gamma}$ hold   for some
   $\gamma>0$,  then ${\bf (\wh{ED})}^s _{\leq, R}$   holds.
Recall that for each $\xi\in  \sX, R>0$ and open set $U,$ define $B_U(\xi, R):=U\cap B(\xi, R).$

\begin{lem}\label{L:3.1'}
 Suppose ${\bf (\wh{EP})} _{\leq , r_0, \gamma}$ holds  with  $r_0>0$ and $\gamma>0.$
  Then there is a constant $C=C(r_0, \gamma)\geq 1$ such that
  for every ball $B$ with radius $r_0$ and  $\overline B\subsetneq \sX$,
   each open $U\subset  2^{-1}B$, $\xi\in U$ and $r\in (0, r_0/4)$,
$$
\P_x \left(\wh X^B_{\wh\tau^B_{B_U (\xi, r)}} \in U \right)
=\P_x \Big(\wh \tau^B_{U}>\wh\tau^B_{B_U(\xi,r)}\Big)
\leq   C \left( \E_x \left[ \wh\tau^B_{B_U(\xi,r)} \right] / r^\gamma \right)^{1/2} \quad\hbox{ for } x\in B_U(\xi, 3r/4).
$$
\end{lem}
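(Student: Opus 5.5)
The plan follows the standard argument of \cite[Proposition 3.1]{CC}, adapted to the dual process $\wh X^B$ killed upon leaving $2^{-1}B$.

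First the equality. Since $U\subset 2^{-1}B\subset B$, the event $\{\wh X^B_{\wh\tau^B_{B_U(\xi,r)}}\in U\}$ says that at the exit time from $B_U(\xi,r)=U\cap B(\xi,r)$ the process is still in $U$. This means the exit happened by leaving $B(\xi,r)$ while staying in $U$, which is exactly $\{\wh\tau^B_U>\wh\tau^B_{B_U(\xi,r)}\}$. To make this precise we use right continuity of paths and the convention that $\wh X^B$ sits at the cemetery after its lifetime.

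For the inequality, fix $x\in B_U(\xi,3r/4)$, so that $B(x,r/4)\subset B(\xi,r)$. Write $\tau:=\wh\tau^B_{B_U(\xi,r)}$ and take $t>0$, to be chosen later. Then
\[
\P_x\big(\wh\tau^B_U>\tau\big)\le \P_x\big(\wh\tau^B_U>\tau,\ \tau\ge t\big)+\P_x\big(\wh\tau^B_U>\tau,\ \tau<t\big).
\]
Markov's inequality bounds the first term by $\E_x[\tau]/t$. On the event in the second term, $\tau=\wh\tau^B_{B(\xi,r)}<\wh\tau^B_U\le \wh\tau^B_{2^{-1}B}$. Since $B(x,r/4)\subset B(\xi,r)$, it follows that $\wh\tau^B_{B(x,r/4)}\le\tau<t\wedge\wh\tau^B_{2^{-1}B}$. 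We have $x\in 2^{-1}B$ and $r/4<r_0/4$, so ${\bf (\wh{EP})}_{\le,r_0,\gamma}$ applies with radius $r/4$ and bounds the second term by $C4^\gamma t/r^\gamma$. This yields
\[
\P_x\big(\wh\tau^B_U>\tau\big)\le \frac{\E_x[\tau]}{t}+C4^{\gamma}\frac{t}{r^\gamma}.
\]

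Now optimize over $t$. Choosing $t=(\E_x[\tau]\,r^\gamma)^{1/2}$ gives the bound $(1+C4^\gamma)\big(\E_x[\tau]/r^\gamma\big)^{1/2}$, which is the claim. If $\E_x[\tau]=0$ the probability is trivially $0$ by right continuity (a.s. $\tau=0$ forces exit through $U$'s complement or immediately; alternatively let $t\downarrow0$). If $\E_x[\tau]=\infty$ the bound is trivial.

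The main obstacle I expect is the event inclusion used in the second term. The point is that the dual condition ${\bf (\wh{EP})}$ only controls exits before $\wh\tau^B_{2^{-1}B}$. We need that on $\{\wh\tau^B_U>\tau\}$ the exit from $B(x,r/4)$ occurs strictly before leaving $2^{-1}B$, and this holds precisely because $U\subset 2^{-1}B$. Handling strict versus non-strict inequalities carefully there is the only delicate point.
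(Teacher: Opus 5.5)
Your proof is correct and is essentially the paper's argument, the standard one from \cite[Lemma 3.2]{CC}: split according to whether the exit time is below $t$, bound the short-time part by ${\bf (\wh{EP})}_{\le, r_0,\gamma}$ at radius $r/4$ (this is where $U\subset 2^{-1}B$ is used), bound the long-time part by Markov's inequality, and take $t\asymp(r^\gamma\E_x[\wh\tau^B_{B_U(\xi,r)}])^{1/2}$. The only cosmetic difference is that the paper first passes to the smaller set via $\{\wh\tau^B_U>\wh\tau^B_{B_U(\xi,r)}\}\subset\{\wh\tau^B_U>\wh\tau^B_{B_U(x,r/4)}\}$ and splits on $\wh\tau^B_{B(x,r/4)}$. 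You instead split directly on $\wh\tau^B_{B_U(\xi,r)}$ and use $\wh\tau^B_{B(x,r/4)}\le \wh\tau^B_{B(\xi,r)}$ on the event.
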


\pf The  proof is similar to Lemma 3.2 in  \cite{CC}.
Let $B$ be a ball with radius $r_0$ so that $\overline B\subsetneq \sX.$
Let  $U\subset 2^{-1}B$ be an open subset and $\xi\in U.$
 We fix $x\in B_U(\xi,3r/4)$  and let $r_1=r/4$.
Suppose the condition  ${\bf (\wh{EP})} _{\leq , r_0, \gamma}$ holds, then there exists $c_1=c_1(r_0, \gamma)$  such that for  $t>0,$
\begin{equation}\label{e:3.2n}\begin{aligned}
&\P_x\big(\wh\tau^B_U>\wh\tau^B_{B_U(x,r_1)}\big)=\P_x\big(\wh\tau^B_U>\wh\tau^B_{B(x,r_1)}\big)\\
=&\P_x\big(\wh\tau^B_U>\wh\tau^B_{B(x,r_1)}, \wh\tau^B_{B(x, r_1)}<t\big)+\P_x\big(\wh\tau^B_U>\wh\tau^B_{B(x,r_1)}\geq t\big)\\
\leq & \P_x\big(\wh\tau^B_{B(x, r_1)}<t\wedge \wh\tau^B_{2^{-1}B} \big)+\P_x(\wh\tau^B_{B_U(x,r_1)}\geq t)\\
\leq &c_1\frac{t}{r_1^\gamma}+\Big(1\wedge\frac{\mathbb{E}_x[\wh\tau^B_{B_U(x, r_1)}]}{t}\Big).
\end{aligned}\end{equation}
Note that $B_U(x, r_1)\subset B_U(\xi, r)$ for $x\in B_U(\xi, 3r/4)$.
Taking $t=\sqrt{r_1^\gamma\E_x[\wh\tau^B_{B_U(\xi,r)}]}$ in \eqref{e:3.2n} yields that for every $x\in B_D(\xi, 3r/4),$
$$
\P_x\big(\wh\tau^B_{U}>\wh\tau^B_{B_U(\xi,r)}\big)
\leq \P_x\big(\wh\tau^B_{U}>\wh\tau^B_{B_U(x,r/4)}\big)
\leq   c_2\Big(\dfrac{\E_x\wh\tau^B_{B_U(\xi, r)}}{r^\gamma}\Big)^{1/2}.
$$
 Thus the desired conclusion is obtained.
\qed

\medskip

By Lemma \ref{L:3.1'} and an argument similar to that of Corollary 3.3 in \cite{CC}, we have the following result.

 \begin{cor}\label{C:1}
  Suppose ${\bf (\wh{EP})} _{\leq , r_0, \gamma}$ holds with  $r_0>0$ and $\gamma>0.$ There is a constant $C=C(r_0, \gamma)\geq 1$ such that
    for every ball $B$ with radius $r_0$ and $\overline B\subsetneq \sX$,
    each open $U\subset  2^{-1}B$, $\xi\in U$ and $r\in (0, r_0)$,
\begin{equation}\label{e:3.6}
\frac{\E_x[\wh \tau^B_{B_U(\xi,r)}]}{r^\gamma\P_x\big(\wh \tau^B_U>\wh \tau^B_{B_U(\xi,r)}\big)}
\geq C\left(\P_x\big(\wh \tau^B_U>\wh \tau^B_{B_U(\xi,r)}\big)+\frac{\E_x[\wh \tau^B_{B_U(\xi,r)}]}{r^\gamma}\wedge1\right) \quad\hbox{ for }x\in B_U(\xi, 3r/4).
\end{equation}
\end{cor}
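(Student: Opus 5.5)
We derive Corollary \ref{C:1} from Lemma \ref{L:3.1'} by elementary algebra, splitting according to the size of $r$ and of the ratio $a:=\E_x[\wh \tau^B_{B_U(\xi,r)}]/r^\gamma$. Throughout we abbreviate $p:=\P_x(\wh \tau^B_U>\wh \tau^B_{B_U(\xi,r)})\in[0,1]$, and we may assume $p>0$, since otherwise the left-hand side of \eqref{e:3.6} is $+\infty$ (or the inequality is trivial by convention). The claim to establish is $a/p\geq C(p+a\wedge 1)$ for $x\in B_U(\xi,3r/4)$, with the constant placed so that it is small enough (we shall need $C\le 1$-type adjustments, i.e. $C$ in \eqref{e:3.6} should be read as a small positive constant depending only on $r_0,\gamma$).

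\emph{Case $r\in(0,r_0/4)$.} Lemma \ref{L:3.1'} gives $p\le C_0 a^{1/2}$, i.e. $a\ge p^2/C_0^2$. Hence $a/p\ge p/C_0^2$. Also, trivially $a/p\ge a\ge a\wedge 1$ because $p\le 1$. Adding, $2a/p\ge C_0^{-2}p+a\wedge1$, so $a/p\ge \tfrac12 (C_0^{-2}\wedge 1)(p+a\wedge1)$, which is \eqref{e:3.6}.

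\emph{Case $r\in[r_0/4,r_0)$.} Here Lemma \ref{L:3.1'} does not apply directly, so we reduce to the previous case with radius $r'=r_0/8$ in place of $r$, exploiting that $r$ is comparable to $r_0$. Since $x\in B_U(\xi,3r/4)$, we apply Lemma \ref{L:3.1'} centered at $x$ itself: $x\in B_U(x,3r'/4)$ and $B_U(x,r')\subset B_U(\xi,r)$ (as $r'+3r/4<r$). Monotonicity then gives $p\le \P_x(\wh\tau^B_U>\wh\tau^B_{B_U(x,r')})\le C_0(\E_x[\wh\tau^B_{B_U(x,r')}]/r'^\gamma)^{1/2}\le C_0 8^{\gamma/2}(\E_x[\wh\tau^B_{B_U(\xi,r)}]/r^\gamma)^{1/2}$, using $r\le 8r'$ and the inclusion of exit times. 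So again $p\le C_0' a^{1/2}$ with $C_0'=C_0(r_0,\gamma)8^{\gamma/2}$, and the algebra of the first case finishes the proof.

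The only delicate point is this reduction for large $r$ (Lemma \ref{L:3.1'} is stated only for $r<r_0/4$); the re-centering at $x$ with a fixed fraction of $r_0$ handles it, at the cost of a constant depending on $\gamma$. Everything else is the trivial inequality $p\le1$.
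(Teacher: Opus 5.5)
Your approach is the paper's. The paper just cites Lemma~\ref{L:3.1'} together with the algebra of \cite[Corollary 3.3]{CC}, which is exactly your first case: $a\ge p^2/C_0^2$ gives $a/p\ge p/C_0^2$, and $p\le 1$ gives $a/p\ge a\ge a\wedge 1$. That computation is correct.

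You are also right on two further points. First, the constant in \eqref{e:3.6} has to be a small positive constant rather than one $\ge 1$: for $p$ near $1$ and $a\le 1$ the left side is about $a$, which is less than $p+a$. Second, Lemma~\ref{L:3.1'} is only stated for $r<r_0/4$, while the corollary claims $r<r_0$. The paper does not comment on this mismatch, since the corollary is only used in Proposition~\ref{P:3.5} with $r<R/4$.

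Your treatment of $r\in[r_0/4,r_0)$, however, contains a false step. With $r'=r_0/8$, the inequality $r'+3r/4<r$ is equivalent to $r>r_0/2$; for instance, at $r=r_0/4$ one gets $r'+3r/4=5r_0/16>r$. So for $r\in[r_0/4,r_0/2]$ the inclusion $B_U(x,r')\subset B_U(\xi,r)$ is not available for all $x\in B_U(\xi,3r/4)$. Both of your monotonicity steps depend on it and lose their justification:
\begin{itemize}
\item $p\le \P_x(\wh\tau^B_U>\wh\tau^B_{B_U(x,r')})$;
\item $\E_x[\wh\tau^B_{B_U(x,r')}]\le\E_x[\wh\tau^B_{B_U(\xi,r)}]$.
\end{itemize}

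The repair is to re-center with $r'=r/4$ instead. This radius is $<r_0/4$ because $r<r_0$. The triangle inequality gives $B_U(x,r/4)\subset B_U(\xi,r)$ for every $x\in B_U(\xi,3r/4)$. Lemma~\ref{L:3.1'} at center $x$ then yields $p\le 2^{\gamma}C_0\,a^{1/2}$ for all $r\in(0,r_0)$ at once, so no case split is needed. This is the same re-centering (with $r_1=r/4$) used inside the proof of Lemma~\ref{L:3.1'}, and that proof in fact only needs $r<r_0$.
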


\begin{lem}\label{L:3.4}
Suppose that ${\rm \bf (\wh{Jt})}_{\leq, \bar r, \gamma}$ holds  with   $\bar r>0$ and $\gamma>0.$ Then there is a constant $C=C(\bar r, \gamma)\in  (0,\infty)$
such that for every ball $B$ with radius $\bar r$ and  $\overline B\subsetneq \sX$,
  every open $U\subset 2^{-1}B$ and every $W\subset \tfrac{1}{2}B\setminus \overline{U}$, we have
\begin{equation}\label{e:3.4}
\P_x(\wh X^B_{\wh\tau^B_U}\in W)\leq C\frac{\E_x[\wh\tau^B_U]}{(d(U,W) \wedge\bar{r})^\gamma} \quad \hbox{ for every }x\in U.
\end{equation}
 Here $d(U, W)$ denotes the distance between two sets $U$ and $W$, that is,
$d(U, W)=\inf\{d(x, y): x\in U \hbox{ and } y\in W\}$.
\end{lem}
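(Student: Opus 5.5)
The plan is to use the L\'evy system formula for the dual process $\wh X^B$, in the same way \cite[Lemma 3.3]{CC} handles the non-dual case. Fix a ball $B$ of radius $\bar r$ with $\overline B\subsetneq \sX$, an open $U\subset 2^{-1}B$ and $W\subset \tfrac12 B\setminus \overline U$. Write $\delta:=d(U,W)$. If $\delta=0$, the right-hand side of \eqref{e:3.4} is infinite or trivially dominating (with the natural convention), so we may assume $\delta>0$.

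The first step is to reduce exiting into $W$ to a jump. Since $W\cap \overline U=\varnothing$, the process $\wh X^B$ cannot reach $W$ at time $\wh\tau^B_U$ continuously. By quasi-left continuity of the Hunt process $\wh X^B$, $\wh X^B_{\wh\tau^B_U-}\in \overline U$ on the event $\{\wh X^B_{\wh\tau^B_U}\in W\}$. Hence $\wh X^B_{\wh\tau^B_U-}\neq \wh X^B_{\wh\tau^B_U}$ there, and the event is realized by a jump from $U$ into $W$. The L\'evy system $(\wh J^B(x,dy),t)$ of $\wh X^B$ then gives, for $x\in U$,
$$
\P_x\big(\wh X^B_{\wh\tau^B_U}\in W\big)=\E_x\int_0^{\wh\tau^B_U}\wh J^B\big(\wh X^B_s, W\big)\,ds .
$$

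The second step is to bound the jump intensity. For $s<\wh\tau^B_U$ we have $\wh X^B_s\in U\subset 2^{-1}B$. Every $w\in W$ satisfies $d(\wh X^B_s,w)\ge\delta\ge \delta\wedge(\bar r/4)$, and $W\subset 2^{-1}B$. Therefore
$$
\wh J^B(\wh X^B_s,W)\le \wh J^B\big(\wh X^B_s,(2^{-1}B)\setminus B(\wh X^B_s,\delta\wedge \bar r/4)\big)\le C(\delta\wedge \bar r/4)^{-\gamma}
$$
by ${\rm \bf (\wh{Jt})}_{\leq,\bar r,\gamma}$, applied with $r=\delta\wedge\bar r/4\in(0,\bar r/4]$. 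Since $(\delta\wedge\bar r/4)^{-\gamma}\le 4^\gamma(\delta\wedge\bar r)^{-\gamma}$, integrating in time yields \eqref{e:3.4} with constant $4^\gamma C$.

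The only delicate point is the first step: justifying that exit into $W$ happens only by a jump. For this I would invoke quasi-left continuity together with the separation of $W$ from $\overline U$. Equivalently, one can apply the L\'evy system identity to $f(s,x,y)=\1_U(x)\1_W(y)$, which vanishes on the diagonal, together with the stopping time $\wh\tau^B_U$. At most one such jump occurs before or at $\wh\tau^B_U$, and on the event in question that jump occurs exactly at $\wh\tau^B_U$. The fact that $\wh J^B(x,dy)$ is the correct L\'evy kernel for $\wh X^B$ on $B\times B$ was recorded before ${\rm \bf (\wh{Jt})}$, so the rest is routine.
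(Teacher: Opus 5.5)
Your proof is correct and is essentially the paper's. The L\'evy system of $\wh X^B$ gives $\P_x(\wh X^B_{\wh\tau^B_U}\in W)=\E_x\int_0^{\wh\tau^B_U}\wh J^B(\wh X^B_s,W)\,ds$, and then ${\rm \bf (\wh{Jt})}_{\leq,\bar r,\gamma}$ is applied with radius $d(U,W)\wedge\bar r/4$; you additionally make explicit the factor $4^\gamma$ that the paper silently absorbs into $C$.

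One small fix to your justification. The fact that $\wh X^B_{\wh\tau^B_U-}\in\overline U$ follows from the existence of left limits, not from quasi-left continuity. For the same reason, the test function in your ``equivalent'' formulation should be $\1_{\overline U}(x)\1_W(y)$ rather than $\1_U(x)\1_W(y)$. With $\1_U$, the identity only computes the probability of jumping into $W$ from a point of $U$, which a priori is only a lower bound for the probability you need.
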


\pf Let $B$ be a ball with radius $\bar r$ so that $\overline B\subsetneq \sX.$
  By using the L\'evy system of $\wh X^B$, and by the condition $ {\bf (\wh{Jt})}_{\leq, \bar r, \gamma}$, we have for $x\in U,$
$$\begin{aligned}
\P_x(\wh X^B_{\wh\tau_U}\in W)&=\E_x \int_0^{\wh\tau^B_U}\wh J^B(\wh X^B_t,W)dt\leq \E_x \int_0^{\wh\tau^B_U}\wh J^B(\wh X^B_t,B(\wh X^B_t,d(U,W)\wedge\tfrac{\bar{r}}{4})^c\cap \tfrac{1}{2}B)dt\\
&\leq \E_x \int_0^{\wh\tau^B_U} \frac{C}{(d(U,W)\wedge\bar{r})^\gamma}dt =C\frac{\E_x[\wh\tau^B_U]}{(d(U,W)\wedge\bar{r})^\gamma}.
\end{aligned}$$
\qed

\begin{prp}\label{P:3.5}
  Suppose that   {\rm ${\bf (\wh{EP})} _{\leq , R, \gamma}$} and
${\bf (\wh{Jt})}_{\leq, R, \gamma}$ hold for $R>0$  with   some $\gamma>0$.  Then there is a constant
 $C=C(R, \gamma)\geq 1$ such that
 for every ball $B$ with radius $R$ and  $\overline B\subsetneq \sX$,
for any open subset
$ U\subset 2^{-1}B$, $\xi \in U$ and  $r\in (0, R/4)$,
$$
\P_x(\wh \tau^B_U>\wh \tau^B_{B_U(\xi,r)})=\P_x (\wh X^B_{\wh\tau^B_{B_U (\xi, r)}} \in U)\leq   C\dfrac{\E_x[\wh \tau^B_{B_U(\xi, r)}]}{r^\gamma} \quad\hbox{ for every }x\in B_U(\xi, r/2).
$$
\end{prp}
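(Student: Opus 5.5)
The plan is to follow the proof of \cite[Proposition 3.1]{CC}, replacing $X$ by the dual process $\wh X^B$. The ingredients are Corollary \ref{C:1} and Lemma \ref{L:3.4}, combined through a dichotomy on the size of $p(x):=\P_x(\wh\tau^B_U>\wh\tau^B_{B_U(\xi,r)})$ compared with $\E_x[\wh\tau^B_{B_U(\xi,r)}]/r^\gamma$.

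Fix a ball $B$ of radius $R$, an open set $U\subset 2^{-1}B$, a point $\xi\in U$, a radius $r\in(0,R/4)$ and $x\in B_U(\xi,r/2)$. Split the exit event according to where $\wh X^B$ lands upon leaving $B_U(\xi,r)$. Let $V:=B_U(\xi,r)$, and let $W_1:=U\setminus B(\xi,3r/4)$ (far landing) and $W_2:=U\cap \big(B(\xi,3r/4)\setminus B(\xi,r)\big)$. The set $W_2$ is empty, since the exit lands outside $B(\xi,r)$ whenever it lands in $U$. So the only issue is landing in $U\setminus B(\xi,r)$, and I would split at a larger radius instead.

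Write $V_1:=B_U(\xi,r)$ and $V_0:=B_U(\xi,3r/4)$. By the strong Markov property at $\wh\tau^B_{V_0}$,
\[
p(x)\le \P_x\big(\wh X^B_{\wh\tau^B_{V_0}}\in U\setminus B(\xi,r)\big)+\E_x\Big[p\big(\wh X^B_{\wh\tau^B_{V_0}}\big);\ \wh X^B_{\wh\tau^B_{V_0}}\in U\cap B(\xi,r)\setminus B(\xi,3r/4)\Big].
\]
The first term is a jump across distance at least $r/4$, so by Lemma \ref{L:3.4} it is bounded by $c\,\E_x[\wh\tau^B_{V_0}]/r^\gamma\le c\,\E_x[\wh\tau^B_{V_1}]/r^\gamma$. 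This uses that $U\setminus B(\xi,r)\subset 2^{-1}B$ and that $d(V_0,U\setminus B(\xi,r))\ge r/4$.

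The second term is the main obstacle, because the landing point is near the boundary of $B(\xi,r)$, where Corollary \ref{C:1} is not directly applicable. I would instead bypass the iteration and argue directly, as in \cite{CC}, with Corollary \ref{C:1} at $x\in B_U(\xi,r/2)\subset B_U(\xi,3r/4)$. Set $a:=\E_x[\wh\tau^B_{V_1}]/r^\gamma$. If $a\ge1$, there is nothing to prove, since $p\le1\le a$. If $a<1$, inequality \eqref{e:3.6} reads $a/p\ge C(p+a)\ge Cp$. This gives $p^2\le a/C$, which by itself only recovers the square-root bound of Lemma \ref{L:3.1'}.

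To upgrade to a linear bound, I would use the argument of \cite[Corollary 3.3 and Proposition 3.1]{CC}. I would apply Corollary \ref{C:1} along a chain of radii $r_k=r(1/2+2^{-k-2})$ and combine it with the strong Markov decomposition above. At each stage, the exits landing far away are controlled linearly by Lemma \ref{L:3.4}, while the exits landing in the thin shell $B(\xi,r_{k})\setminus B(\xi,r_{k+1})$ require returning through the shell. Each such return costs another factor of a probability that is bounded by \eqref{e:3.6}. Summing the resulting geometric series then yields $p\le C'a$ with $C'=C'(R,\gamma)$. Tracking that all constants depend only on $R,\gamma$ through Corollary \ref{C:1} and Lemma \ref{L:3.4} gives the uniformity in $B$, $U$, $\xi$ and $r$.
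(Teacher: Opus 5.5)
Your proposal has a genuine gap, and it sits exactly where you place the main obstacle. Corollary \ref{C:1} (equivalently Lemma \ref{L:3.1'}) only yields $p\lesssim\sqrt{a}$, and you need to upgrade this to $p\lesssim a$. The far-landing estimate via Lemma \ref{L:3.4} is fine, but the mechanism in your last paragraph does not work.

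\begin{itemize}
\item Your strong Markov decomposition at $\wh\tau^B_{V_0}$ has no contraction in its second term. The value $p$ can be close to $1$ near $\partial B(\xi,r)$. The probability of landing in $U\cap B(\xi,r)\setminus B(\xi,3r/4)$ is at most $\P_x(\wh\tau^B_U>\wh\tau^B_{V_0})$, which is the same exit problem at radius $3r/4$.
\item The chain $r_k=r(1/2+2^{-k-2})$ makes matters worse. The $k$-th shell has width $r2^{-k-3}$, so any use of Lemma \ref{L:3.4} or Lemma \ref{L:3.1'} across it costs a factor of order $2^{k\gamma}$ or $2^{k\gamma/2}$, which grows in $k$.
\item Inequality \eqref{e:3.6} only gives $p\le (a/C)^{1/2}$. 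It never gives a fixed factor below one per ``return''.
\end{itemize}
Since a diffusive component can cross the shells continuously, there is no per-shell cost to sum. The asserted geometric series has no basis, and iterating over radii only reproduces the square-root bound.

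The missing idea is the box method of \cite{CC}, which the paper carries out for $\wh X^B$. It orders points by the size of $p+a$, not by their distance to $\xi$. Let $U_j$ be the points of $B_U(\xi,\frac{3r}{4}-\sum_{k\le j}\frac{3r}{2\pi^2k^2})$ where $p+a\in[2^{-(j+1)},2^{-j})$. Let $V_j=\bigcup_{k<j}U_k$ and $\lambda_j=\inf_{V_j}a/p$. For $x\in U_j$, decompose at the exit from the small ball $B_U(x,\frac{3r}{4\pi^2j^2})$:
\begin{itemize}
\item Landing in $V_j$ costs at most $\lambda_j^{-1}\E_x[\wh\tau^B_{B_U(\xi,r)}-\wh\tau^B_{x,j}]/r^\gamma$, by the strong Markov property.
\item Landing in $W_{i-1}\setminus(V_i\cup W_i)$ (where $p<2^{1-i}$), or outside $B(\xi,3r/4)$, happens across a distance $\gtrsim r/i^2$. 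Lemma \ref{L:3.4} then gives terms $c\,2^{1-i}i^{2\gamma}\E_x[\wh\tau^B_{x,j}]/r^\gamma$, which are summable in $i$.
\item Landing in $W_j$, where $p+a<2^{-j}\le 2(p(x)+a(x))$, is weighted by the probability of leaving the small ball into $U$. Lemma \ref{L:3.1'} at scale $r/j^2$ bounds this by $c\,2^{-j/2}j^\gamma$, so the contribution is at most $\epsilon_j(p(x)+a(x))$ with $\epsilon_j:=2c\,2^{-j/2}j^\gamma$.
\end{itemize}
This gives $\lambda_{j+1}\ge\frac{1-\epsilon_j}{1+\epsilon_j}\min\{\lambda_j,c\}$. (The factor $2$ in front of $\max\{\lambda_j^{-1},c_7\}$ in the paper's display is superfluous and must be dropped, otherwise the product degenerates.) Corollary \ref{C:1} gives $\lambda_j\ge c\,2^{-j}$ for the finitely many $j\le j_0$. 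Since $\sum_j\epsilon_j<\infty$, it follows that $\inf_j\lambda_j>0$, i.e.\ $p\le Ca$ on $\bigcup_jU_j\supset B_U(\xi,r/2)$.

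The crucial feature is pairing the dyadic levels $2^{-j}$ of $p+a$ with the polynomially shrinking radii $r/j^2$, so that the square-root loss becomes summable. Without this, or a Urysohn-type cut-off function (which is deliberately not assumed here), the square-root bound cannot be improved.
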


\pf  The proof is similar to that of \cite[Theorem 3.1]{CC}, using a ``box" method argument.
For the reader's convenience, we spell out the details.
Let $B$ be a ball  with radius $R$ so that $\overline B\subsetneq \sX.$
Let $ U\subset 2^{-1}B$ be an open subset, $\xi \in U$ and  $r\in (0, R/4).$
  Recall that $\sum_{k=1}^\infty k^{-2}= \pi^2/6$.
First, we introduce some subsets of $B_U(\xi,3r/4)$ as follows. Define
  $$
U_0  :=\Big\{x\in B_U(\xi,3r/4): \P_x(\wh \tau^B_U>\wh \tau^B_{B_U(\xi,r)})+r^{-\gamma}\E_x[\wh \tau^B_{B_U(\xi,r)}]\geq \frac12\Big\},
$$
and for $j\geq 1$,
\begin{align*}
U_j& :=\Big\{x\in B_U \Big( \xi,\frac{3r}4-\sum_{k=1}^j\frac{3r}{2\pi^2k^2} \Big): \P_x(\wh \tau^B_U>\wh \tau^B_{B_U(\xi,r)})
+r^{-\gamma}\E_x[\wh \tau^B_{B_U(\xi,r)}]
\in [2^{-(j+1)},2^{-j}) \Big\} ,\\
V_j&:=\bigcup_{k=0}^{j-1}U_k \qquad \hbox{and} \qquad
W_j :=B_U \Big( \xi,\frac{3r}4 -\sum_{k=1}^{j-1}\frac{3r}{2\pi^2k^2} \Big) \setminus V_j.
\end{align*}
Observe that  for $j\geq 0$,
\begin{align}\label{e:3.1}
\begin{split}
V_j &  \supset   \Big\{x\in B_U \Big( \xi,\frac{3r}4-\sum_{k=1}^{j-1}\frac{3r}{2\pi^2k^2} \Big): \P_x(\wh \tau^B_U>\wh \tau^B_{B_U(\xi,r)})
+r^{-\gamma}\E_x[\wh \tau^B_{B_U(\xi,r)}]
  \geq 2^{-j} \Big\}, \\
W_j &=  \Big\{x\in B_U \Big( \xi,\frac{3r}4-\sum_{k=1}^{j-1}\frac{3r}{2\pi^2k^2} \Big): \P_x(\wh \tau^B_U>\wh \tau^B_{B_U(\xi,r)})
+r^{-\gamma} \E_x[\wh \tau^B_{B_U(\xi,r)}]     < 2^{-j} \Big\},
\end{split}
\end{align}
and
$$
B_U(\xi, r/2)\subset \bigcup_{j=1}^\infty V_j=\bigcup_{j=0}^\infty U_j \subset B_U(\xi, 3r/4).
$$
For $j\geq 1$, we define
\begin{equation}\label{eqn32}
\lambda_j=\begin{cases}
	\inf\limits_{x\in V_j}\frac{\E_x[\wh \tau^B_{B_U(\xi,r)}]}{\P_x(\wh \tau^B_U>\wh \tau^B_{B_U(\xi,r)})r^\gamma}
	 &\hbox{ if }V_j\neq\emptyset,\\
	+\infty        &\hbox{ if }V_j=\emptyset.
\end{cases}
\end{equation}
For each $j\geq 1$ and $x\in U_j$, define $\wh \tau^B_{x,j}:=\wh \tau^B_{B_U(x,\frac{3r}{4\pi^2j^2})}$.

 \medskip

\noindent(i) By \eqref{eqn32} and the strong Markov property of $\wh X^B$, we have  for $x\in U_j$,
\begin{eqnarray}\label{eqn33}
 \P_x(\wh X^B_{{ \wh \tau^B_{x,j} } }\in V_j,\wh \tau^B_U>\wh \tau^B_{B_U(\xi,r)})
 &=& \E_x[\P_{\wh X^B_{{ \wh \tau^B_{x,j} } }}(\wh \tau^B_U>\wh \tau^B_{B_U(\xi,r)});\wh X^B_{{ \wh \tau^B_{x,j} } }\in V_j]
 \nonumber \\
&\leq &  \lambda_j^{-1} r^{-\gamma}\E_x \left[   {\E_{\wh X^B_{{ \wh \tau^B_{x,j} } }}[\wh \tau^B_{B_U(\xi,r)}]}  ;  \wh X^B_{{ \wh \tau^B_{x,j} } }\in V_j  \right]
 \nonumber \\
 &\leq &  \lambda_j^{-1}\E_x[\wh \tau^B_{B_U(\xi,r)}-{ \wh \tau^B_{x,j} } ]/r^\gamma.
 \end{eqnarray}

\medskip

\noindent(ii) Since $x\in U_j\subset B_U(\xi,\frac{3r}4-\frac{3r}{2\pi^2})$,  $B_U(x,\frac{3r}{4\pi^2j^2})\subset B_U(\xi,\frac{3r}{4} -\frac{3r}{4\pi^2})$ and so the distance between $B_U(x,\frac{3r}{4\pi^2j^2})$ and $B_U(\xi, {3r}/4)^c$ is at least $\frac{3r}{4\pi^2}$. Thus
 by  Lemma \ref{L:3.4},
\begin{equation}\label{eqn34}\begin{aligned}
\P_x\big(\wh X^B_{{ \wh \tau^B_{x,j} } }\in U \setminus B_U(\xi, {3r}/4)\big)\leq c_1\frac{\E_x[{ \wh \tau^B_{x,j} } ]}{(\frac{3r}{4\pi^2})^\gamma}\leq c_2\frac{\E_x[{ \wh \tau^B_{x,j} } ]}{r^\gamma} .
\end{aligned}\end{equation}

\medskip

\noindent(iii) For $2\leq i\leq j$  and $x\in U_j\subset B_U(\xi,\frac{3r}4-\sum_{k=1}^j\frac{3r}{2\pi^2k^2})$, noticing that
\begin{eqnarray*}
d\Big(B_U \big(x,\frac{3 r}{4\pi^2j^2}  \big),W_{i-1}\setminus (V_i\cup W_i )\Big)
& \geq &d\Big(x,  W_{i-1}\setminus B\big(\xi,\frac{3r}4-\sum_{k=1}^{i-1} \frac{3r}{2\pi^2k^2} \big)
\Big)-\frac{3r}{4\pi^2j^2} \\
& \geq &\sum_{k=i}^{j}\frac{3r}{2\pi^2k^2}-\frac{3r}{4\pi^2j^2}\geq \frac{3r}{4\pi^2i^2},
\end{eqnarray*}
we have by  Lemma \ref{L:3.4},
$$
\P_x\big(\wh X^B_{{ \wh \tau^B_{x,j} } }\in W_{i-1}\setminus (V_i\cup W_i)\big)\leq c_3\frac{\E_x[{ \wh \tau^B_{x,j} } ]}{(\frac{3r}{4\pi^2i^2})^\gamma}
\leq c_4i^{2\gamma}\frac{\E_x[{ \wh \tau^B_{x,j} } ]}{r^\gamma}.
$$
Hence, by    the strong Markov property of $X$ and the definition of $W_i$,
\begin{eqnarray}\label{eqn35}
&& \P_x\big(\wh X^B_{{ \wh \tau^B_{x,j} } }\in W_{i-1}\setminus (V_i\cup W_i),\wh \tau^B_U>\wh \tau^B_{B_U(\xi,r)}\big) \nonumber \\
&=& \E_x \Big[  \P_{\wh X^B_{\wh \tau^B_{x, j}}} ( \wh \tau^B_U>\wh \tau^B_{B_U(\xi,r)}); \, \wh X^B_{{ \wh \tau^B_{x,j} } }\in W_{i-1}\setminus (V_i\cup W_i) \big]
\nonumber \\
&\leq & c_42^{1-i}i^{2\gamma}\frac{\E_x[{ \wh \tau^B_{x,j} } ]}{r^\gamma}.
\end{eqnarray}

\noindent(iv) By Lemma \ref{L:3.1'} and the definition of $U_j$, there are constants $c_5,c_6
\geq 1 $  so that for $x\in U_j$,
\begin{equation}\label{eqn36}\begin{aligned}
\begin{split}
&\quad\,\P_x\big(\wh X^B_{{ \wh \tau^B_{x,j} } }\in U \big)=\P_x\big(\wh \tau^B_U>\wh \tau^B_{B_U(x,\frac{3}{4\pi^2j^2}r)}\big)
\leq c_5\sqrt{\E_x[\wh \tau^B_{B_U(\xi,r)}]/(\tfrac{3}{4\pi^2j^2}r)^\gamma}
 \leq c_62^{-j/2}j^\gamma.
\end{split}
\end{aligned}\end{equation}
 Hence by the strong Markov property of $X$   and the definition of $U_j$, for $x\in U_j,$
\begin{eqnarray}\label{eqn37}
 && \P_x\big(\wh X^B_{{ \wh \tau^B_{x,j} } }\in W_j,\wh \tau^B_U>\wh \tau^B_{B_U(\xi,r)}\big)  \nonumber \\
&=&\E_x\big[\P_{\wh X^B_{{ \wh \tau^B_{x,j} } }}(\wh \tau^B_U > \wh \tau^B_{B_U(\xi, r)}) ; \wh X^B_{{ \wh \tau^B_{x,j} } } \in W_j\big] \nonumber \\
&\leq& \P_x\big(\wh X^B_{{ \wh \tau^B_{x,j} } }\in W_j\big)\cdot 2^{-j}   \nonumber \\
&\leq & \P_x\big(\wh X^B_{{ \wh \tau^B_{x,j} } }\in U \big) \cdot  2 \Big(\P_x(\wh \tau^B_U>\wh \tau^B_{B_U(\xi,r)})+\frac{\E_x[\wh \tau^B_{B_U(\xi,r)}]}{r^\gamma}
\Big) \nonumber \\
&\leq&  2c_62^{-j/2}j^\gamma\Big( \P_x(\wh \tau^B_U>\wh \tau^B_{B_U(\xi,r)})+\frac{\E_x[\wh \tau^B_{B_U(\xi,r)}]}{r^\gamma} \Big),
 \end{eqnarray}
where we used  \eqref{eqn36} in the last inequality.

\medskip

Recall that $V_1\subset V_2\subset\cdots\subset V_j$ and $V_1\cup W_1=B_U(\xi, {3r}/4)$, we have
$$\begin{aligned}
V_j\cup\big(\bigcup_{i=2}^{j}W_{i-1}\setminus (W_i\cup V_i)\big)\cup W_j
=V_j\cup \big(\bigcup_{i=2}^{j}(W_{i-1}\setminus W_i)\big)\cup W_j=V_j\cup W_1=B_U(\xi, 3r/4 ).
\end{aligned}$$
Thus it follows from \eqref{eqn33},  \eqref{eqn34}, \eqref{eqn35} and  \eqref{eqn37}  that for $x\in U_j,$
$$\begin{aligned}
&\quad\,\P_x(\wh \tau^B_U>\wh \tau^B_{B_U(\xi,r)})\\
&=\P_x(\wh X^B_{{ \wh \tau^B_{x,j} } }\in V_j,\wh \tau^B_U>\wh \tau^B_{B_U(\xi,r)})+\sum_{i=2}^j\P_x\big(\wh X^B_{{ \wh \tau^B_{x,j} } }\in W_{i-1}\setminus (W_i\cup V_i),\wh \tau^B_U>\wh \tau^B_{B_U(\xi,r)}\big)\\
		&\quad\qquad +\P_x(\wh X^B_{{ \wh \tau^B_{x,j} } }\in W_j,\wh \tau^B_U>\wh \tau^B_{B_U(\xi,r)})+\P_x\big(\wh X^B_{{ \wh \tau^B_{x,j} } }\in U\setminus B_U(\xi,  3r/4),\wh \tau^B_U>\wh \tau^B_{B_U(\xi,r)}\big)\\
		&\leq \lambda_j^{-1}\frac{\E_x[\wh \tau^B_{B_U(\xi,r)}-{ \wh \tau^B_{x,j} } ]}{r^\gamma}+\sum_{i=2}^j c_42^{1-i}i^{2\gamma}\frac{\E_x[{ \wh \tau^B_{x,j} } ]}{r^\gamma}+c_2\frac{\E_x[{ \wh \tau^B_{x,j} } ]}{r^\gamma}\\
		&\quad  +2c_62^{-j/2}j^\gamma\Big(\P_x(\wh \tau^B_U>\wh \tau^B_{B_U(\xi,r)})+\frac{\E_x[\wh \tau^B_{B_U(\xi,r)}]}{r^\gamma}\Big)\\
		&\leq 2\max\{\lambda_j^{-1},c_7\}\frac{\E_x[\wh \tau^B_{B_U(\xi,r)}]}{r^\gamma}+2c_62^{-j/2}j^\gamma
		\Big(\P_x(\wh \tau^B_U>\wh \tau^B_{B_U(\xi,r)})+\frac{\E_x[\wh \tau^B_{B_U(\xi,r)}]}{r^\gamma}\Big)\\
		&\leq (2+2c_62^{-j/2}j^\gamma)\max\{\lambda_j^{-1},c_7,1\}\frac{\E_x[\wh \tau^B_{B_U(\xi,r)}]}{r^\gamma}+2c_62^{-j/2}j^\gamma\P_x(\wh \tau^B_U>\wh \tau^B_{B_U(\xi,r)}),
\end{aligned}$$
where  $c_7:=c_2+\sum_{i=2}^{\infty}c_42^{1-i}i^{2\gamma}$. Since the above estimate works for any $x\in U_j$, we have by the definition of $\lambda_{j+1}$,
\begin{equation}\label{eqn38}
\lambda_{j+1}\geq \frac{1-2c_62^{-j/2}}{2+2c_62^{-j/2}}\min\{\lambda_j,c_8\},
\end{equation}
where $c_8=\min\{1,c_7^{-1}\}$. Now,  fix $j_0$ such that  $2c_62^{-j/2}j^\gamma<1/2$ for every $j\geq j_0$.
By Corollary \ref{C:1} and the fact that $\P_x(\wh \tau^B_U>\wh \tau^B_{B_U(\xi,r)})+\E_x[\wh \tau^B_{B_U(\xi,r)}]r^{-\gamma} \geq 2^{-j}$ for each $x\in V_j$, there is a constant $c_9>0$  so that
$$
\lambda_j\geq c_92^{-j}\ \hbox{ for every }j\leq j_0.
$$
  For $j>j_0$, by using \eqref{eqn38}, we get
$$
\lambda_j\geq \min\{c_92^{-j_0},c_8\}\cdot\prod_{k=j_0}^{j-1}\frac{1-2c_62^{-j/2}}{2+2c_62^{-j/2}}.
$$
This proves $\lambda_j\geq \min\{c_92^{-j_0},c_8\}\cdot\prod_{k=j_0}^\infty \frac{1-2c_62^{-j/2}j^2}{2+2c_62^{-j/2}j^2} > 0 $ for every $j\geq 1$. The proposition follows immediately.
\qed

\smallskip

\begin{proof}[Proof of Proposition \ref{P1}.]
 As mentioned earlier, by   \cite[Proposition 3.1]{CC},
  ${\rm \bf (Jt)}_{\leq, R, \gamma}$  and  ${\bf (EP)} _{\leq, R, \gamma}$  for some $\gamma>0$
  yields  ${\bf (ED)}^s _{\leq, R}$ holds.
So it suffices to show  if for each $R\in (0, R_0),$ {\rm ${\bf (\wh{EP})} _{\leq , R, \gamma}$} and
${\bf (\wh{Jt})}_{\leq, R, \gamma}$ hold with some $\gamma>0,$
then ${\bf (\wh{ED})}^s _{\leq, R}$  holds.

Let $x_0\in\sX$ and $B=B(x_0, R)$ be a ball
  so that $\overline{B} \subsetneq \sX$.
Let $U\subset 2^{-1}B$ be an open subset.
Note that for each  $x\in U\cap A(x_0, 3R/8, R/2),$ we have $B_U(x, R/16)\subset U\cap A(x_0, R/4, R/2).$
Thus  for each  $x\in U\cap A(x_0, 3R/8, R/2),$
$$ \P_x  \big(\wh X^B_{\wh\tau^B_{U\cap A(x_0, R/4, R/2)}} \in U \big)\leq \P_x (\wh X^B_{\wh\tau^B_{B_U (x, R/16)}} \in U).$$
Suppose  {\rm ${\bf (\wh{EP})} _{\leq , R, \gamma}$} and
${\bf (\wh{Jt})}_{\leq, R, \gamma}$ hold  with some  $\gamma>0$.
Then by applying Proposition \ref{P:3.5}, there is  a positive constant $C=C(R, \gamma)$ independent of
 the center $x_0\in\sX$ of
 the ball $B(x_0, R)$ such that for each  $x\in U\cap A(x_0, 3R/8, R/2),$
$$
\P_x (\wh X^B_{\wh\tau^B_{B_U (x, R/16)}} \in U)\leq   C\dfrac{\E_x[\wh \tau^B_{B_U(x, R/16)}]}{R^\gamma}\leq C\dfrac{\E_x[\wh \tau^B_{U\cap A(x_0, R/4, R/2)}]}{R^\gamma}.
$$
Combining the two displays above,  ${\bf (\wh{ED})}^s _{\leq, R}$   holds.
Thus the proof is complete.
 \end{proof}

\section{Examples}\label{S:4}

In this section,  we present some examples of Theorem \ref{T0}.
We show that the BHP  holds for  a large class of symmetric diffusion with jumps associated with the regular symmetric Dirichlet forms on
 metric measure spaces having volume doubling and reverse volume doubling properties  that admits a two-sided  heat kernel estimates of the mixture of (sub-)Gaussian and stable-like form, a class of non-symmetric diffusion processes with jumps in non-divergence form in $\R^d$ and an example of a  L\'evy process with degenerate Gaussian component.

\subsection{Symmetric discontinuous  processes with diffusive part
 on  metric measure spaces}

Let $ ( \sX, d, m)$ be a metric measure space and $m$  a positive
Radon measure on $\sX$ with full support. We assume that all balls are
relatively compact and assume for simplicity that $m (\sX)=\infty.$

For each $x\in \sX$ and $r>0,$
let  $V(x, r):= m(B(x, r))$.
We say  the metric measure space $(\sX, d, m)$
is volume doubling (VD) if there is $C_1\in (1,\infty)$ such that
$$
V(x,2r) \leq C_1 \, V(x,r) \qquad \hbox{ for every }  x\in \sX   \hbox{ and }   r\in (0,\infty).
$$
This is equivalent to the existence of positive constants $c_1$ and $d_1$ so that
\begin{equation}\label{e:vd}
\frac{ V(x, R)}{V(x, r)} \leq c_1  \Big(\frac{R}{r} \Big)^{d_1}  \quad
\hbox{ for every }x\in \sX   \hbox{ and }   0<r\leq R<\infty.
\end{equation}
We say that reverse volume doubling property (RVD) holds if there are positive constants $c_2$ and $d_2$ so that
\begin{equation}\label{e:rvd}
\frac{V(x,R)} {V(x,r)} \geq  c_2 \Big(\frac{R}{r} \Big)^{d_2}  \quad
\hbox{ for every }x\in \sX   \hbox{ and }   0<r \leq R  \leq {\rm diam}(\sX).
\end{equation}
This is equivalent to the existence of $\lambda_0\geq 2 $ and $C_2 >1$ so that
\begin{equation}\label{e:rvd2}
V(x, \lambda_0 r)   \geq C_2  V(x, r) \quad \hbox{ for every }x\in \sX   \hbox{ and }   0<r    \leq   {\rm diam}(\sX)/\lambda_0.
\end{equation}
If $\sX$ is connected and unbounded, (VD) implies (RVD) ; See \cite[Proposition 2.1 and  the  paragraph before Remark 2.1]{GH}.

We consider a regular symmetric Dirichlet form $(\mathcal E, \mathcal F)$  having both the strongly local term and  the pure-jump term,
and having no killing term.
That is,
\begin{equation}\label{e:E}
\begin{split}
\mathcal E(f,g)=& \, \mathcal E^{(c)}(f,g)+\int_{\sX\times \sX\setminus \textrm{diag}}(f(x)-f(y)(g(x)-g(y))\,J(dx,dy)\\
=&:\mathcal E^{(c)}(f,g)+\mathcal E^{(j)}(f,g),\qquad f,g\in \mathcal F,\end{split}
\end{equation}
where $(\mathcal E^{(c)},\mathcal F)$ is the strongly local part of  $(\mathcal E, \mathcal F)$
(namely $\mathcal E^{(c)}(f, g)=0$ for all $f, g\in \mathcal F$ having $(f-c)g=0$ $\mu$-a.e.\ on $\sX$ for some constant $c\in \R$)
 and  $J(\cdot,\cdot)$ is a symmetric Radon measure on $\sX\times \sX\setminus \textrm{diag}.$
 We assume that neither $\mathcal E^{(c)}(\cdot,\cdot)$ nor $J(\cdot,\cdot)$ are identically zero.

Let $\R_+:=[0,\infty)$ and  $\phi_c: \R_+\to \R_+$ (resp. $\phi_j: \R_+\to \R_+$)  be a strictly increasing continuous
function  with $\phi_c (0)=0$ (resp. $\phi_j(0)=0$),
$\phi_c(1)=1$ (resp. $\phi_j(1)=1$)
and satisfying that there exist constants $c_{1,\phi_c},c_{2,\phi_c}>0$ and $\beta_{2,\phi_c}\ge \beta_{1,\phi_c}>1$ (resp. $c_{1,\phi_j},c_{2,\phi_j}>0$ and $\beta_{2,\phi_j}\ge \beta_{1,\phi_j}>0$)
such that
\begin{equation}\label{polycon}
\begin{split}
 &c_{1,\phi_c} \Big(\frac Rr\Big)^{\beta_{1,\phi_c}} \leq
\frac{\phi_c (R)}{\phi_c (r)}  \ \leq \ c_{2,\phi_c} \Big(\frac
Rr\Big)^{\beta_{2,\phi_c}}
\quad \hbox{for all }
0<r \le R. \\
\bigg(\hbox{resp. }  &c_{1,\phi_j} \Big(\frac Rr\Big)^{\beta_{1,\phi_j}} \leq
\frac{\phi_j (R)}{\phi_j (r)}  \ \leq \ c_{2,\phi_j} \Big(\frac
Rr\Big)^{\beta_{2,\phi_j}}
\quad \hbox{for all }
0<r \le R.\bigg)\end{split}\end{equation}

We assume that
 \begin{equation}\label{e:1.11}
 \phi_c(r)\le \phi_j(r)   \hbox{ for } r\in (0,1] \quad \hbox{ and } \quad \phi_c(r)\ge \phi_j(r) \hbox{  for } r\in [1,\infty).
 \end{equation}
 Since $\beta_{1,\phi_c}>1$, by  \cite[Definition, p.\ 65; Definition, p.\ 66; Theorem 2.2.4 and its remark, p.\ 73]{BGT}, there exists a strictly increasing
 continuous  function $\bar \phi_c (r): \R_+\to \R_+$
  such that there are  constants $c_2\geq c_1>0$ so that
 \begin{equation}\label{e:1.12}
  c_1 \frac{ \phi_c(r)}{r} \le \bar \phi_c (r)\le c_2 \frac{\phi_c(r)}{r}  \quad \hbox{for all } r>0.
  \end{equation}

Define
\begin{equation}
p^{(c)}(t,x,y):=
\frac{1}{V(x,\phi_c^{-1}(t))} \exp\left(- \frac{d(x,y)}{\bar \phi_c^{-1}(t/d(x,y))}\right)
\end{equation}
and
\begin{equation}
p^{(j)}(t, x, y):= \frac{1}{V(x, \phi_j^{-1}(t))}\wedge \frac{t}{V(x, d(x, y)) \phi_j (d(x, y))}.
\end{equation}

\begin{defn} {\bf ($\bf {HK}(\phi_c, \phi_j)$ condition)} \rm
 We say that
${\rm HK}(\phi_c, \phi_j)$
holds if there exists a heat kernel $p(t, x,y)$
of the semigroup $\{P_t\}_{t\ge0}$ associated with $(\mathcal E,\mathcal F)$ in \eqref{e:E}
and the following estimates hold
for all $t>0$ and all $x,y\in \sX$,
\begin{equation}\label{HK}\begin{split}
&   c_1\Big(\frac 1{V(x,\phi_c^{-1}(t))}\wedge \frac 1{V(x,\phi_j^{-1}(t))} \wedge
 \big(p^{(c)}(c_2 t,x,y)+p^{(j)}(t,x,y)\big)\Big) \\
 & \le \   p(t, x,y)  \\
  &\le c_3\Big(\frac 1{V(x,\phi_c^{-1}(t))}\wedge \frac 1{V(x,\phi_j^{-1}(t))} \wedge
\big(p^{(c)}(c_4 t,x,y)+p^{(j)}(t,x,y)\big)\Big),
\end{split}
\end{equation}
where $c_k>0$, $k=1, \cdots, 4$,  are constants independent of $x,y\in \sX$ and $t>0$.

\end{defn}

\begin{prp}\label{C2}
Assume that the metric measure space $(\sX, d , m)$ satisfies
${\rm (VD)}$ and ${\rm (RVD)}$,
the scale functions $\phi_c$ and $\phi_j$ satisfy  \eqref{polycon} and \eqref{e:1.11}.
Suppose $X$ is the $m$-symmetric Hunt process associated with the regular Dirichlet form $(\mathcal E,\mathcal F)$ in \eqref{e:E} and satisfies
the condition ${\rm HK(\phi_c, \phi_j)}$. Let $D$ be an  open subset in $\sX$. Then  for each $z_0\in \partial D$ and $R>0$
so that $\overline{B(z_0, 2R)}\subsetneq \sX$,
there exists a constant  $C=C(z_0, R)\geq 1$  independent of the geometry of $D$  such that   for   any  nonnegative  regular harmonic functions $f$ and $g$
on $D\cap B(z_0, R)$   vanishing  on  $(D^c)^r\cap B(z_0, R),$
\begin{equation}\label{e:4.11}
f(x)g(y)\leq Cf(y)g(x)  \quad \hbox{for } x, y\in D\cap B(z_0, R/32).
\end{equation}
Furthermore, if in addition we assume there exist a positive constant $c$ and a positive function $V_1$ on $\R_{+}$ such that
 $V(x, r)\geq cV_1(r)$ for any $x\in \sX$ and $r>0,$  then  BHP  \eqref{e:4.11} holds
   with $C=C(R)\geq 1$.

\end{prp}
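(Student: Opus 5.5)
The plan is to deduce Proposition \ref{C2} from Theorem \ref{T0} together with Corollary \ref{C1}. Symmetry makes the duality part of (A1) automatic. The remaining conditions will be checked from ${\rm HK}(\phi_c,\phi_j)$ using (VD) and (RVD). We take $m_B:=m|_B$ and $\wh X^B:=X^B$. Since $X$ is $m$-symmetric, (A1) holds with $\wh X=X$. Absolute continuity of the resolvents follows from the existence of the heat kernel. Hunt's hypothesis holds for symmetric processes with a heat kernel, since they satisfy the absolute continuity condition and are self-dual, so semipolar sets are polar. Irreducibility follows from the strict positivity of the lower bound in \eqref{HK}. Writing $R_0=\infty$ (or any fixed $R_0$), it then suffices to verify (A2') via Proposition \ref{P1}, and to verify (A3), (A4), and for the second part (A3'), (A4'), (A5).

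For (A2'), we use Remark \ref{R:1.11}: by symmetry it is enough to prove ${\bf (EP)}_{\leq,R,\gamma}$ and ${\bf (Jt)}_{\leq,R,\gamma}$. Symmetric heat kernel estimates of this type identify the jump kernel as
\[
J(x,dy)=J(x,y)\,m(dy),\qquad J(x,y)\asymp \frac{1}{V(x,d(x,y))\,\phi_j(d(x,y))},
\]
which one obtains as $\lim_{t\to0}p(t,x,y)/t$ off the diagonal, as in the works of Chen--Kumagai--Wang. Summing over the dyadic annuli $B(x,2^{k+1}r)\setminus B(x,2^kr)$ and using (RVD) together with \eqref{polycon} gives
\[
J(x,B(x,r)^c)\le c/\phi_j(r)\le c\,r^{-\beta_{2,\phi_j}}\quad\text{for } r\le R.
\]
For ${\bf (EP)}$ we use the standard estimate $\P_x(\tau_{B(x,r)}<t)\le c\,t/\phi(r)$ with $\phi=\phi_c\wedge\phi_j$. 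It follows from the heat kernel upper bound through the usual argument $\P_x(\tau_B\le t)\le \P_x(X_{2t}\notin B(x,r/2))+\sup_{y\in\partial}\P_y(X_{s}\notin B(y,r/2))$ and integration of \eqref{HK}. The Gaussian part decays faster than any power; the jump part gives $t/\phi_j(r)$. Taking $\gamma:=\max(\beta_{2,\phi_c},\beta_{2,\phi_j})$ gives $t/\phi(r)\le C t r^{-\gamma}$ for $r\le R$. I expect this exit-probability estimate from the mixed upper bound to be the main technical step, though it is standard.

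For (A3), we use $G^B(x,y)\le G^{B(x,2R)}(x,y)=\int_0^\infty p^{B(x,2R)}(t,x,y)\,dt$ with $d(x,y)\ge R/16$. For small $t$ we bound by \eqref{HK}, which is off-diagonal bounded. For large $t$ we use the exponential decay of the killed semigroup, coming from $\sup_x\E_x\tau_{B(x,2R)}<\infty$ (via ${\bf (EP)}$ and (RVD) lower bounds on exit). This gives a bound depending only on $R$ and on $\inf_{x\in B(z_0,R)} V(x,\cdot)$. (A4) and \eqref{e:J1}--\eqref{e:J2} follow from the two-sided formula for $J(x,y)$ and (VD). For $x\in pB$ and $y\notin qB$ we have $d(x,y)\asymp d(x_0,y)$ and $V(x,d(x,y))\asymp V(x_0,d(x_0,y))$. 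Positivity and finiteness in \eqref{e:J3} follow because $V(x_0,\cdot)$ is bounded above and below on $[R/16,R]$.

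(A5) follows from ${\bf (EP)}$ and iteration. Since $\P_x(\tau_{B(x,R)}\ge t_0)\le 1-\delta$ uniformly, the Markov property gives a geometric tail. For uniformity in $z_0$, the constants above depend on $z_0$ only through lower bounds of $V(z_0,r)$ at fixed scales. The extra assumption $V(x,r)\ge cV_1(r)$ removes that dependence, which yields (A3'), (A4') and hence $C=C(R)$ by the second part of Theorem \ref{T0}.
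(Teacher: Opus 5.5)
Your reduction is the paper's. You get (A1) from symmetry and Hunt's hypothesis for symmetric processes, and (A2') from Proposition~\ref{P1} and Remark~\ref{R:1.11} once \textbf{(EP)} and \textbf{(Jt)} hold. (A4)/(A4') come from the two-sided bound on $J(x,y)$, and then you apply Theorem~\ref{T0}. The paper cites \cite{CKW3, CKW1} for \textbf{(EP)}, \textbf{(Jt)} and $\E_x\tau_{B(x,r)}\asymp\phi(r)$ (Lemmas~\ref{L:4.1}, \ref{L:4.3}, \ref{L:4.4}); you rederive them from \eqref{HK}, which is the same content.

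The genuinely different step is (A3). The paper (Lemma~\ref{L:4.2}) uses the elliptic Harnack inequality of \cite[Theorem 1.18]{CKW3}. A Harnack chain replaces $G_{B(x_0,r)}(x,y)$ by its average over an annulus. That average is at most $\E_x\tau_{B(x_0,r)}$ divided by the measure of the annulus, hence $\le c\,\phi(r)/V(x_0,\varepsilon_0 r)$ by Lemma~\ref{L:4.1} and (RVD). A second chain then moves $x$ out to $B(x_0,r/16)$.

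You instead integrate the killed heat kernel in time:
\begin{itemize}
\item For $t\le T$, the off-diagonal part of \eqref{HK} is bounded because $d(x,y)\ge R/16$. The exponential factor in $p^{(c)}$ beats the polynomial growth of $1/V(x,\phi_c^{-1}(t))$ coming from (VD).
\item For $t\ge T$, Chapman--Kolmogorov gives $p^U(t,x,y)\le \P_x(\tau_U>t-T)\,\sup_w p(T,w,y)$. Hence $\int_T^\infty p^U(t,x,y)\,dt\le \E_x\tau_U\cdot\sup_w p(T,w,y)$.
\end{itemize}
You should write this step out. No exponential decay of the survival probability is needed, only $\E_x\tau_U<\infty$.

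Your route needs only the upper heat kernel bound, (VD) and finiteness of $\E_x\tau_U$, and it treats all $x\in B(x_0,R/16)$ at once. The paper's route is shorter once EHI (a consequence of the full two-sided estimate) is available, and it gives an explicit bound without time splitting.

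One justification runs the wrong way. \textbf{(EP)} bounds $\P_x(\tau_{B(x,r)}<t)$ from above, i.e.\ it rules out early exit. It therefore gives lower bounds on exit times and cannot yield $\P_x(\tau_{B(x,R)}\ge t_0)\le 1-\delta$, nor (A5).

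Use instead the near-diagonal part of \eqref{HK}, $p(t,z,w)\le c/V(z,\phi^{-1}(t))$. For $z\in B(x,r)$,
\[
\P_z(\tau_{B(x,r)}>t)\le \P_z(X_t\in B(z,2r))\le c\,V(z,2r)/V(z,\phi^{-1}(t)).
\]
By (RVD) this is at most $1/2$ for $t=\phi(\lambda r)$ with $\lambda$ large. (RVD) is available at all scales here, since $m(\sX)=\infty$ together with relative compactness of balls forces ${\rm diam}(\sX)=\infty$. The Markov property then gives $\sup_x\E_x\tau_{B(x,r)}\le c\,\phi(r)$, which is what both your (A3) argument and (A5) require.

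Similarly, the annulus summation for \textbf{(Jt)} uses (VD) and the lower scaling of $\phi_j$, not (RVD).
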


\smallskip

\begin{exa}\label{E:4.3}{\bf (Symmetric diffusion with jumps on $\R^d$.)} \quad \rm

A prototype of  Proposition \ref{C2} is the Dirichlet form $(\mathcal{E}, \D(\mathcal{E}))$ in $L^2(\R^d)$ which is given by
\begin{equation}\label{e:E1}\begin{aligned}
\mathcal{E}(u, v)=\dfrac{1}{2}&\int_{\R^d} A(x)\nabla u(x)\cdot \nabla v(x) \,dx\\
&+\int_{\R^d\times\R^d} (u(x)-u(y))(v(x)-v(y)) J(x, y)\,dx\,dy
\end{aligned}\end{equation}
and $\D(\mathcal{E})=\F=\overline{C_c^1(\R^d)}^{\mathcal{E}_1}$ ,
where  $\mathcal{E}_1(u, v):=\mathcal{E}(u, v)+\int_{\R^d} u(x)v(x)\,dx.$
Here $(a_{ij}(x))_{1\leq  i, j\leq d}$ is a symmetric $d\times d$ matrix-valued function on $\R^d$
that is   uniformly bounded and elliptic.
The jump density function $J(x, y)$ satisfies that there are positive constants $c_k, k=1, 2$ so that
$$
 \frac{c_1 }{|x-y|^d\phi_0  (|x-y|)}\leq
J(x, y)  \leq \frac{c_2 }{|x-y|^d\phi_0  (|x-y|)}
\quad \hbox{for   } (x, y) \in \R^d\times \R^d \setminus {\rm diag},
$$
where $\phi_0$ is a  strictly increasing function on $[0, \infty)$
such that
$\phi_0 (0)=0$, $\phi_0 (1)=1$  and
there exist constants $c \ge1$ and
 $0<\aa_* \leq \aa^*<2$
so that
$$
c^{-1}  \Big(\frac Rr\Big)^{\aa_*} \le
\frac{\phi_0 (R)}{\phi_0 (r)} \le c\Big(\frac
Rr\Big)^{\aa^*}
\quad \hbox{for every } 0<r<R<\infty.
$$
It follows from \cite{CK3} that   associated with $(\mathcal{E}, \mathcal F)$ in \eqref{e:E1} is a symmetric strong Markov process $X$ in $\R^d$ satisfying
${\rm HK}(\phi_c, \phi_j)$ with $\phi_c(r)=r^2$ and $\phi_j(r)=\phi_0(r)$.
 Hence by Proposition \ref{C2},   for each $R>0,$ the BHP \eqref{e:4.11}  holds
 with $C=C(R)\geq 1$ uniformly for $z_0\in \partial D.$

\end{exa}

\begin{exa}\label{E:4.4}{\bf ( Symmetric diffusion with jumps on $d$-set with walk dimension $\beta \geq 2$.)} \quad \rm
Let  $(\sX, d , m)$ be an Alfhors $d$-regular set.
Suppose that there is a $m$-symmetric diffusion $\{Z_t, t\ge0\}$ on $\sX$
such that
it has a transition density function $q(t, x, y)$  with respect to the measure $m$ that has the following two-sided estimates:
\begin{equation}\label{e:1.7}
q(t, x, y) \asymp t^{-d/\beta} \exp \left( - \left(\frac{  d(x, y)^\beta }{ t} \right)^{1/(\beta -1)}  \right),
\quad t>0, x, y\in \sX
\end{equation}
 for some $\beta \geq 2$. Denote by $(\bar{\mathcal E},\bar{\mathcal F})$ the corresponding Dirichlet form.
   A prototype
  is a Brownian motion on the  $n$-dimensional   unbounded Sierpi\'{n}ski gasket; see, for instance, \cite{BP}.
  In this case, $d=\log (n+1)/\log 2$ is the Hausdorff dimension of
the gasket, and $  \beta=\log (n+3)/\log 2$ is called the walk dimension in \eqref{e:1.7}.

Take any $\alpha \in (0, \beta)$ and a symmetric strongly local regular Dirichlet form $(\mathcal E^{(c)}, \bar{\mathcal F})$
in $L^2(\sX; m)$ with the property that $\mathcal E^{(c)} (f,f)\asymp \bar{\mathcal E} (f,f)$ for all
$f\in \bar{\mathcal F}$. Consider the following regular Dirichlet form  $(\mathcal E,\bar{\mathcal F})$ in $L^2(\sX; m)$ defined by
 \begin{equation}\label{eq:DFdsetw}
\mathcal E (u,v)=
\mathcal E^{(c)} (u,v)
+ \int_{\sX}\int_{\sX} (u(x)-u(y))(v(x)-v(y))
\frac{c(x,y)}{d(x,y)^{d+\alpha}}\,m(dx)\,m(dy),
\end{equation}
where
$c(\cdot,\cdot)$ is a symmetric measurable
function on $\sX\times \sX$
that is bounded between two positive constants.
It is shown in \cite[Example 7.2]{CKW3} that $(\mathcal E,\bar{\mathcal F})$ enjoys
 ${\rm HK} (\phi_c, \phi_j)$ with  $\phi_c (r)= r^\beta$ and $\phi_j (r)=r^\alpha.$
 By the definition of  Alfhors $d$-regular set, there exits $c>0$ such that $V(x, r)=m(B(x, r))\geq cr^d$ for any $x\in \sX$ and $r>0.$
 Hence by  Proposition \ref{C2},  for each $R>0,$ the BHP \eqref{e:4.11}  holds
 with $C=C(R)\geq 1$ uniformly for $z_0\in \partial D$.
\end{exa}

\smallskip

In the following, we give the proof of  Proposition \ref{C2}.
Given $\phi_c$ and $\phi_j$ satisfying \eqref{e:1.11},  set
\begin{equation}\label{e:scaling-}
\phi(r):=\phi_c(r)\wedge \phi_j(r)
=\begin{cases} \phi_c(r),\quad& r\in (0, 1], \\
\phi_j(r),\quad &r\in [1, \infty).
\end{cases}
\end{equation}
It is clear that $\phi(r)$ is a strictly increasing function on $\R_+$ with $\phi(0)=0$ and $\phi(1)=1$, and satisfies that there
exist constants $c_{1,\phi},c_{2,\phi}>0$ so that
\begin{equation}\label{e:4.2}c_{1,\phi} \Big(\frac Rr\Big)^{\beta_{1,\phi}} \leq
\frac{\phi (R)}{\phi (r)}  \ \leq \ c_{2,\phi} \Big(\frac
Rr\Big)^{\beta_{2,\phi}}
\quad \hbox{for all }
0<r \le R,\end{equation}  where $\beta_{1,\phi}=\beta_{1,\phi_c}\wedge \beta_{1,\phi_j}$ and $\beta_{2,\phi}=\beta_{2,\phi_c}\vee \beta_{2,\phi_j}$.

Suppose $X$ is the $m$-symmetric Hunt process associated with the regular Dirichlet form $(\mathcal E,\mathcal F)$ in \eqref{e:E} and satisfies
the condition ${\rm HK}(\phi_c, \phi_j)$.
By Proposition 3.2 (i) in \cite{CKW3},  $(\mathcal E, \mathcal F)$ is conservative.
For each open set $B,$ denote by $\tau_B$ the first exit time of $X$ from $B.$
The following  Lemma is  due to Theorems 1.13 and 1.14 in \cite{CKW3}.

\begin{lem}\label{L:4.1}
There exists  $C>1$ such that for any $x\in \sX$ and  $r>0,$
$$
 C^{-1}\phi(r)\leq \E_x\tau_{B(x, r)}  \leq C\phi(r).
$$
Hence assumption (A5) holds.
\end{lem}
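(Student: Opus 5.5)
The plan is to obtain both bounds from the heat kernel estimate ${\rm HK}(\phi_c,\phi_j)$, via the standard equivalences in \cite{CKW3}. Since the lemma is attributed to Theorems 1.13 and 1.14 there, the main job is to check which consequences of ${\rm HK}(\phi_c,\phi_j)$ those theorems supply. Throughout, $\phi$ is the scale function defined in \eqref{e:scaling-} and satisfying \eqref{e:4.2}.

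For the upper bound, I would use the near-diagonal lower bound in \eqref{HK} together with (VD). This gives $p(t,x,y)\ge c/V(x,\phi^{-1}(t))$ whenever $d(x,y)\le \phi^{-1}(t)$. Indeed, for such $y$, both $p^{(c)}(c_2t,x,y)$ and $p^{(j)}(t,x,y)$ are comparable to the on-diagonal terms, and the minimum of the two volume terms equals $1/V(x,\phi^{-1}(t))$ because $\phi=\phi_c\wedge\phi_j$.

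Choose $t=\phi(\delta r)$ with a small $\delta$. By (RVD), the ball $B(x,r)$ contains an annulus of $y$'s with $\delta r\ge d(x,y)\ge \delta r/\lambda_0$ and measure comparable to $V(x,\phi^{-1}(t))$. Hence, uniformly in $z\in B(x,r)$, the process started at $z$ has probability at least some $\eta>0$ of being outside $B(x,r)$ at a time of order $\phi(r)$. To make this work, I would take $t$ of order $\phi(Kr)$ for a large $K$ and use the lower bound of $p^{(j)}$ at distance of order $r$. This yields the exit probability lower bound directly through the jump part, with no chaining needed.

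From this uniform estimate, the Markov property gives $\sup_z\P_z(\tau_{B(x,r)}>k t)\le(1-\eta)^k$. Therefore $\E_x\tau_{B(x,r)}\le c\phi(r)$, using the doubling property \eqref{e:4.2} to compare $\phi(Kr)$ with $\phi(r)$.

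For the lower bound, it suffices to show $\P_x(\tau_{B(x,r)}\le \varepsilon\phi(r))\le 1/2$ for a small $\varepsilon$. Then $\E_x\tau_{B(x,r)}\ge \tfrac12\varepsilon\phi(r)$. I would derive this from the upper heat kernel bound. First, integrating \eqref{HK} over $B(x,r/2)^c$ using (VD) gives a survival estimate $\int_{B(x,r/2)^c}p(t,z,y)\,m(dy)\le c\,t/\phi(r)+$(Gaussian-type tail), uniformly for $z$ near $x$. The jump part contributes $t/\phi_j(r)$ by summing over dyadic annuli. The sub-Gaussian part is controlled via $\bar\phi_c$ and \eqref{e:1.12}.

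Next, convert this fixed-time tail bound into an exit-time bound by the usual strong Markov argument: $\P_x(\tau\le t)\le \P_x(d(X_{t},x)\ge r/2)+\sup_{z\in B(x,r)^c,\ s\le t}\P_z(d(X_s,z)\ge r/2)$. This is small once $t=\varepsilon\phi(r)$ with $\varepsilon$ small, since $\phi\le\phi_c$ and $\phi\le \phi_j$ on the relevant scales.

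The main obstacle is the sub-Gaussian tail integration, because it must be done with $\phi=\phi_c\wedge\phi_j$ rather than a single scale function. In particular, for $r\le1$ the exit time is governed by $\phi_c$, and for $r\ge1$ by $\phi_j$. Here I would follow \cite{CKW3}, or simply cite Theorems 1.13 and 1.14 there, which state that ${\rm HK}(\phi_c,\phi_j)$ implies ${\rm E}_\phi$, namely $\E_x\tau_{B(x,r)}\asymp\phi(r)$.

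Finally, (A5) follows from the upper bound, since $\sup_x\E_x\tau_{B(x,R)}\le C\phi(R)<\infty$.
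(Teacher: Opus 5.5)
Your proposal ends where the paper's proof does. The paper gives no argument beyond citing Theorems 1.13 and 1.14 of \cite{CKW3}, which give $\E_x\tau_{B(x,r)}\asymp\phi(r)$ from ${\rm HK}(\phi_c,\phi_j)$ under (VD) and (RVD), and reads (A5) off the upper bound. So the proposal is correct.

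What you add is a hand derivation from the two-sided heat kernel bounds. The upper bound comes from a uniform exit probability at time $\phi(Kr)$ plus geometric iteration. The lower bound comes from the integrated off-diagonal upper bound plus a strong Markov step. This shows which halves of ${\rm HK}(\phi_c,\phi_j)$ are used, at the cost of redoing the sub-Gaussian tail integration that \cite{CKW3} already contains.

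If you keep the sketch, three points need fixing.

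\textbf{The ``jump part'' remark is false for small $r$.} You say the uniform exit probability comes directly through the jump part, with no chaining. When $Kr\le 1$ one has $t=\phi(Kr)=\phi_c(Kr)$. For $\phi_c(s)=s^2$ and $\phi_j(s)=s^\alpha$, the mass of $p^{(j)}(t,z,\cdot)$ on $B(z,Kr)\setminus B(z,2r)$ is at most of order $K^2r^{2-\alpha}$, which tends to $0$ as $r\to0$.

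What works is the near-diagonal bound you state first: $p(t,z,y)\ge c/V(z,\phi^{-1}(t))$ for $d(z,y)\le\phi^{-1}(t)$. In this regime it is supplied by $p^{(c)}$, not $p^{(j)}$. Integrate it over $B(z,Kr)\setminus B(z,2r)\subset B(x,r)^c$ and use (RVD) to get mass at least $\tfrac12 V(z,Kr)$ for large $K$.

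(RVD) applies at every scale because $m(\sX)=\infty$, together with relatively compact balls, forces $\sX$ to be unbounded. Also delete the aborted sentence with $t=\phi(\delta r)$.

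\textbf{Your own ``both comparable'' claim is inaccurate.} For $\phi_j^{-1}(t)<d(x,y)\le\phi_c^{-1}(t)$, $p^{(j)}$ is not comparable to its on-diagonal value. Only one of $p^{(c)}$, $p^{(j)}$ needs to be large, and your conclusion survives.

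\textbf{The lower-bound step uses conservativeness.} Bounding $\P_x(X_t\notin B(x,r/2))$ by $\int_{B(x,r/2)^c}p(t,x,y)\,m(dy)$ ignores killing before time $t$. The paper records conservativeness just before the lemma, via \cite[Proposition 3.2(i)]{CKW3}.
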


For each ball $B$ in $\sX,$ denote by $G_B(x, y)$ the Green function of $X$ in $B$ with respect to the reference measure $m.$

\begin{lem}\label{L:4.2}
For each $x_0\in \sX$ and any  $r>0,$
\begin{equation}\label{e:4.4}
\sup_{x\in B(x_0, r/16)}\sup_{y\in B(x_0, r/2)\setminus B(x_0, r/8)}G_{B(x_0, r)}(x, y)<\infty.
\end{equation}
Hence assumption (A3) holds for $X$ in $\sX$.
 Furthermore,  if in addition we assume there exist a positive constant $c$ and a strictly positive function $V_1$ on $\R_{+}$ such that
 $V(x, r)\geq cV_1(r)$ for any $x\in \sX$ and $r>0,$ then assumption (A3') holds for $X$ in $\sX$.
\end{lem}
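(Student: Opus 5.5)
We need to bound the Green function $G_{B(x_0,r)}(x,y)$ for $x$ near the center and $y$ in the annulus, using HK$(\phi_c,\phi_j)$. The plan is to write
$$
G_{B}(x,y)=\int_0^\infty p^{B}(t,x,y)\,dt ,
$$
where $B=B(x_0,r)$ and $p^B$ is the Dirichlet heat kernel of $X^B$. We then split the time integral at $t=T:=\phi(r)$ and use the pointwise domination $p^B(t,x,y)\le p(t,x,y)$. Note that $d(x,y)\ge r/8-r/16=r/16$ for $x\in B(x_0,r/16)$ and $y\in B(x_0,r/2)\setminus B(x_0,r/8)$, so the points are uniformly separated.

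For $t\le T$, we use the upper bound in \eqref{HK} with $d(x,y)\ge r/16$. The jump part satisfies
$$
p^{(j)}(t,x,y)\le \frac{t}{V(x,r/16)\,\phi_j(r/16)} ,
$$
whose integral over $[0,T]$ is bounded by $T^2/(V(x,r/16)\phi_j(r/16))$. For the diffusive part, $p^{(c)}(c_4t,x,y)\le V(x,\phi_c^{-1}(c_4t))^{-1}\exp\!\big(-(r/16)/\bar\phi_c^{-1}(c_4t/(r/16))\big)$. As $t\to0$ the exponential factor decays faster than any power of $t$, because \eqref{e:1.12} and \eqref{polycon} with $\beta_{1,\phi_c}>1$ give polynomial control of $\bar\phi_c^{-1}$. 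On the other hand, by (VD) and \eqref{polycon}, $V(x,\phi_c^{-1}(c_4t))^{-1}\le c\,V(x,r)^{-1}(r/\phi_c^{-1}(c_4t))^{d_1}$, which grows at most polynomially in $1/t$. Hence this integrand is integrable on $(0,T]$, with a bound of the form $c(r)/V(x,r)$.

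For $t>T$ we cannot rely on the free heat kernel, since with only (RVD) the process need not be transient. Instead we use the strong Markov property together with the exit-time bound of Lemma \ref{L:4.1}. The semigroup identity and symmetry give
$$
p^B(t,x,y)=\int_B p^B(T/2,x,z)\,p^B(t-T/2,z,y)\,m(dz)\le \sup_{z}p(T/2,z,y)\,\P_x(\tau_B>t-T/2).
$$
Since $\E_x\tau_B\le C\phi(r)$ uniformly by Lemma \ref{L:4.1} (which also yields assumption (A5)), the Markov property and Chebyshev's inequality give exponential decay $\P_x(\tau_B>s)\le 2e^{-s/(2eC\phi(2r))}$. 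Moreover $\sup_z p(T/2,z,y)\le c/V(y,\phi^{-1}(T/2))\asymp c/V(y,r)$ by (VD). So the integral over $(T,\infty)$ is bounded by $c\,\phi(r)/V(y,r)$.

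Combining the two pieces gives the bound $c(r)\big(1/\inf_{x\in B(x_0,r)}V(x,r/16)\big)$. This is finite because $V(\cdot,r/16)$ is bounded below on the compact closure of the ball, using full support and (VD) to compare $V(x,r/16)$ with $V(x_0,r)$; this proves \eqref{e:4.4}. Since the process is $m$-symmetric, we take $m_B=m$ in (A1), so \eqref{e:4.4} is exactly (A3). Under the additional hypothesis $V(x,r)\ge cV_1(r)$, every constant above depends only on $r$, which gives (A3').

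The main obstacle is the small-time integrability of $p^{(c)}$. Making it precise requires a careful estimate of $\bar\phi_c^{-1}(s)$ as $s\to0$ via \eqref{polycon}. The natural route is to show $\bar\phi_c^{-1}(s)\le c\,s^{1/(\beta_{2,\phi_c}-1)}$ for small $s$ and then check that the exponent $(r/16)/\bar\phi_c^{-1}(c_4t/(r/16))$ tends to infinity polynomially in $1/t$.
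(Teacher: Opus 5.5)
Your proof is correct, but it takes a different route from the paper.

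\textbf{The paper's route.} The paper never integrates the heat kernel. It invokes the uniform elliptic Harnack inequality from \cite{CKW3} for $G_{B(x_0,r)}(x,\cdot)$, which is harmonic away from the pole, and uses a chain argument to get $\sup G\le c\,\inf G$ over the annulus $B(x_0,r/2)\setminus B(x_0,\varepsilon_0 r)$ for $x\in B(x_0,\varepsilon_0 r/2)$. It then averages over the annulus, bounding the average by $\E_x\tau_{B(x_0,r)}$ divided by the $m$-mass of the annulus, hence by $\lesssim\phi(r)$ over that mass via Lemma~\ref{L:4.1}. It uses (RVD) to choose $\varepsilon_0$ so that the annulus has mass $\gtrsim V(x_0,\varepsilon_0 r)$. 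Finally, a second Harnack chain moves $x$ out to $B(x_0,r/16)$, using harmonicity of $G_{B(x_0,r)}(\cdot,y)$ in $B(x_0,r/8)$.

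\textbf{Your route.} You use only the upper half of ${\rm HK}(\phi_c,\phi_j)$, (VD) and the mean exit time bound. The uniform separation $d(x,y)>r/16$ handles all $x\in B(x_0,r/16)$ at once. The off-diagonal decay controls the small-time integral, and Chapman--Kolmogorov plus the exponential tail of $\tau_B$ controls the large-time integral. This avoids the elliptic Harnack inequality, (RVD) and both chain arguments. The price is twofold:
\begin{itemize}
\item the explicit small-time analysis of $p^{(c)}$, which is routine: $\bar\phi_c(s)\asymp \phi_c(s)/s\ge c\,s^{\beta_{2,\phi_c}-1}$ for $s\le 1$ gives $\bar\phi_c^{-1}(u)\le c\,u^{1/(\beta_{2,\phi_c}-1)}$ for small $u$;
\item identifying $G_B(x,y)$ pointwise with $\int_0^\infty p^B(t,x,y)\,dt$, which is fine here since the heat kernel is continuous.
\end{itemize}
The paper's argument is softer in a complementary sense: it needs only the elliptic Harnack inequality, (RVD) and mean exit time bounds, not full off-diagonal heat kernel upper bounds. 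Both routes give a bound of order $c(r)/V(x_0,\cdot\,r)$, so both yield (A3') under the extra hypothesis $V(x,r)\ge cV_1(r)$.

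\textbf{One small slip.} As written, $\int_B p^B(T/2,x,z)\,p^B(t-T/2,z,y)\,m(dz)$ does not produce $\P_x(\tau_B>t-T/2)$. You can fix it in either of two ways:
\begin{itemize}
\item put the time $t-T/2$ next to $x$, so that $\int_B p^B(t-T/2,x,z)\,p^B(T/2,z,y)\,m(dz)\le \sup_z p(T/2,z,y)\,\P_x(\tau_B>t-T/2)$;
\item keep your order and use symmetry to get $\sup_z p(T/2,x,z)\,\P_y(\tau_B>t-T/2)$.
\end{itemize}
Either version gives the same final bound.
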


\pf  It follows from \cite[Theorem 1.18]{CKW3} that the uniform elliptic Harnack inequality holds for $X.$
Let $\ee_0\in (0, 1/8)$ which will be determined later.
Note that for each $x\in B(x_0, r),$ $G_{B(x_0, r)}(x, \cdot)$ is harmonic in $B(x_0, r)\setminus \{x\}.$ By the uniform elliptic Harnack inequality and  standard chain argument, there exists $c_1>0$ such that for any $x_0\in \sX, r>0$ and $x\in B(x_0, \ee_0 r/2),$
$$\sup_{y\in B(x_0, r/2) \setminus B(x_0, \ee_0 r)}G_{B(x_0, r)}(x, y)\leq c_1\inf_{u\in B(x_0, r/2) \setminus B(x_0, \ee_0 r)}G_{B(x_0, r)}(x, u).$$
Hence by this inequality and Lemma \ref{L:4.1}, for $x\in B(x_0, \ee_0 r/2)$ and $y\in B(x_0, r/2) \setminus B(x_0, \ee_0 r),$
\begin{equation}\label{e:4.14}\begin{aligned}
G_{B(x_0, r)}(x, y)&\leq c_1\dfrac{1}{m(B(x_0, r/2)\setminus B(x_0, \ee_0 r))} \int_{B(x_0, r/2) \setminus B(x_0, \ee_0 r)} G_{B(x_0, r)}(x, u)\,m(du)\\
&\leq c_1\dfrac{1}{V(x_0, r/2)-V(x_0, \ee_0 r)}\E_x \left[ \tau_{B(x_0, r)} \right]\\
&\leq c_2\dfrac{1}{V(x_0, r/2)-V(x_0, \ee_0 r)}\phi(r).
\end{aligned}\end{equation}
By the (RVD) condition \eqref{e:rvd2}, we can take $\ee_0$ small enough such that there exists $c_3>1$ independent of $x_0\in \sX$ so that
$$
V(x_0, r/2)-V(x_0, \ee_0 r)\geq (c_3-1)V(x_0, \ee_0 r).
$$
Hence by this together with \eqref{e:4.14},  we have
\begin{equation}\label{e:4.16}
\begin{aligned}
&\sup_{x\in B(x_0, \ee_0 r/2), y\in B(x_0, r/2) \setminus B(x_0, r/8)}G_{B(x_0, r)}(x, y)\\
&\leq \sup_{x\in B(x_0, \ee_0 r/2), y\in B(x_0, r/2) \setminus B(x_0, \ee_0r)}G_{B(x_0, r)}(x, y)\leq \dfrac{c_2}{c_3-1}\dfrac{\phi(r)}{V(x_0, \ee_0 r)}<\infty.
\end{aligned}\end{equation}
Note that for each $y\in B(x_0, r/2)\setminus B(x_0, r/8), G_{B(x_0, r)}(\cdot, y)$ is harmonic in $B(x_0, r/8).$ Then for each $x\in B(x_0, r/16)\setminus B(x_0, \ee_0r/2)$ and $y\in B(x_0, r/2)\setminus B(x_0, r/8),$  one can use  the standard chain argument, \eqref{e:4.16} and the uniform elliptic Harnack inequality to obtain
 \eqref{e:4.4}.

 Furthermore,  if in addition we assume there exist a positive constant $c$ and a positive function $V_1$ on $\R_{+}$ such that
 $V(x, r)\geq cV_1(r)$ for any $x\in \sX$ and $r>0,$ then \eqref{e:4.16} holds uniformly for $x_0\in \sX.$
This together with the standard chain argument and  uniform elliptic Harnack inequality   yields that \eqref{e:4.4} holds uniformly for $x_0\in\sX.$
That is,  assumption (A3') holds.
\qed

\smallskip

The following result follows from  \cite[Lemma 3.4]{CKW3} and \eqref{e:4.2}.

\begin{lem}\label{L:4.3}
For each $R>0,$ there are constants  $C_k>0, k=1, 2$  such that for any $x\in \sX, r\in (0, R)$ and $t>0,$
$$
\P_x(\tau_{B(x,r)}\leq t)\leq  C_1  \frac{ t}{\phi(r)}\leq C_2 \dfrac{t}{r^{\beta_{2, \phi}}}.
$$
Hence ${\bf (EP)} _{\leq, R, \gamma}$ holds for $X$ for  $\gamma=\beta_{2, \phi}.$
\end{lem}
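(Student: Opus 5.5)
The bound is a consequence of the two-sided heat kernel estimate ${\rm HK}(\phi_c,\phi_j)$, through its on-diagonal part and the exit-time bounds of Lemma \ref{L:4.1}, together with the scaling \eqref{e:4.2} of $\phi$. The plan is to quote \cite[Lemma 3.4]{CKW3} for the first inequality, $\P_x(\tau_{B(x,r)}\leq t)\leq C_1 t/\phi(r)$, and then use the lower scaling of $\phi$ to replace $\phi(r)$ by $r^{\beta_{2,\phi}}$ at the cost of an $R$-dependent constant. If one wants to re-derive the first inequality rather than cite it, a standard route is available and is sketched below.

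\textbf{Re-deriving the first inequality.} Let $\tau=\tau_{B(x,r)}$.
\begin{enumerate}
\item[(a)] Use the strong Markov property at $\tau$:
$$\P_x(\tau\le t)\le \P_x\big(X_t\notin B(x,r/2)\big)+\sup_{z\notin B(x,r)}\P_z\big(X_s\notin B(z,r/2)\hbox{ for some } s\le t\big).$$
\item[(b)] Apply the tail bound $\P_y(d(X_s,y)\ge \rho)\le c\, s/\phi(\rho)$. This follows by integrating the upper bound in \eqref{HK} over dyadic annuli, using (VD) and \eqref{polycon}.
	\begin{enumerate}
	\item[(i)] The jump part $p^{(j)}$ contributes $\sum_k s/\phi_j(2^k\rho)\lesssim s/\phi_j(\rho)$.
	\item[(ii)] The Gaussian-type part $p^{(c)}$ contributes an exponentially small term, which is also $\lesssim s/\phi_c(\rho)$.
	\item[(iii)] Since $\phi=\phi_c\wedge\phi_j$, the sum of (i) and (ii) is $\lesssim s/\phi(\rho)$.
	\end{enumerate}
\item[(c)] Combining (a) and (b) gives $\P_x(\tau\le t)\le c\,t/\phi(r/2)\le c'\,t/\phi(r)$, using the doubling of $\phi$ from \eqref{e:4.2}.
\end{enumerate}
The one delicate point is step (a). The sup over $s\le t$ must be controlled by a fixed-time bound, and this requires a reflection-type argument; this is exactly what \cite[Lemma 3.4]{CKW3} provides, so I would simply cite it.

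\textbf{Second inequality.} Fix $R>0$ and $r\in(0,R)$. By \eqref{e:4.2} with $r\le R$,
$$\frac{\phi(R)}{\phi(r)}\le c_{2,\phi}\Big(\frac{R}{r}\Big)^{\beta_{2,\phi}},$$
so
$$\frac{1}{\phi(r)}\le \frac{c_{2,\phi}R^{\beta_{2,\phi}}}{\phi(R)}\, r^{-\beta_{2,\phi}}.$$
Setting $C_2:=C_1c_{2,\phi}R^{\beta_{2,\phi}}/\phi(R)$ gives the second inequality.

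\textbf{Conclusion.} Since $\P_x(\tau<t)\le\P_x(\tau\le t)$, the estimate \eqref{e:ept0} holds with $r_0=R$ and $\gamma=\beta_{2,\phi}$. The constant is uniform in $x$, so ${\bf (EP)}_{\leq,R,\gamma}$ holds.
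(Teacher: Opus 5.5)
Your proposal is correct and matches the paper, which likewise gets the first inequality by citing \cite[Lemma 3.4]{CKW3} and the second from the scaling \eqref{e:4.2}, exactly as you do. One remark on your optional sketch: step (a) as written (with ``for some $s\le t$'') is circular, but applying the strong Markov property at $\tau$ to the event $\{\tau\le t,\ X_t\in B(x,r/2)\}$ bounds it by $\sup_{z\in\sX,\, s\le t}\P_z(X_s\notin B(z,r/2))$, so no reflection argument is needed (compare the proof of Lemma \ref{L:4.5}).
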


\begin{lem}\label{L:4.4}
There is a symmetric and positive function $J(x, y)$ in $\sX\times \sX$ such that  $J(dx, dy)=J(x, y) \,m(dx)\, m(dy)$ in $\sX\times \sX.$
Moreover, there are positive constants $C_k, k=1, 2$ so that
\begin{equation}\label{e:4.15}
 \frac{C_1 }{V(x,  d(x, y))\phi_j  (d(x, y) )}\leq
J(x, y)  \leq \frac{C_2 }{V(x,  d(x, y))\phi_j  (d(x, y) )}
\quad \hbox{for   } (x, y) \in \sX\times \sX \setminus {\rm diag}.
\end{equation}
Furthermore, for each $R>0,$ there is a constant $C_3>0$ such that for every $x\in \sX$ and $r\in (0, R)$,
\begin{equation}\label{e:4.6}
\int_{B(x, r)^c} J(x, y) m(dy)\leq \frac{C_3}{r^{\beta_{2, \phi}}}.
\end{equation}
Hence assumption (A4') and ${\rm \bf (Jt)}_{\leq, R, \beta_{2, \phi}}$ hold for $X$.
\end{lem}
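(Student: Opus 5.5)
The plan is to read everything off the two-sided heat kernel estimate ${\rm HK}(\phi_c,\phi_j)$ by the standard route from heat kernel to jump kernel, and then do a direct tail computation using (VD).

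\emph{Step 1: the jump density.} By the Beurling--Deny representation \eqref{e:E}, the jump measure of the Dirichlet form is recovered as the vague limit
\[
J(dx,dy)=\lim_{t\downarrow 0}\frac{1}{2t}\,p(t,x,y)\,m(dx)\,m(dy)
\quad\hbox{on } \sX\times\sX\setminus{\rm diag}.
\]
(Equivalently, one can cite the jump-kernel estimates already established alongside ${\rm HK}(\phi_c,\phi_j)$ in \cite{CKW3}.) Fix $x\neq y$ with $r:=d(x,y)$ and let $t\to 0$. The upper bound in \eqref{HK} contains $p^{(c)}(c_4t,x,y)$. Since $\bar\phi_c^{-1}(t/r)\to 0$ as $t\to0$, this term decays faster than any power of $t$, so $p^{(c)}(c_4t,x,y)/t\to 0$. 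The minimum in $p^{(j)}$ equals $t/(V(x,r)\phi_j(r))$ once $t$ is small. The prefactors $1/V(x,\phi_c^{-1}(t))$ and $1/V(x,\phi_j^{-1}(t))$ blow up, so for small $t$ they do not cut the minimum. Hence
\[
\frac{p(t,x,y)}{t}\asymp \frac{1}{V(x,r)\phi_j(r)} \quad\hbox{for all small } t,
\]
and the limit measure has a density $J(x,y)$ satisfying \eqref{e:4.15}. Symmetry follows from the symmetry of $p$. Positivity and finiteness off the diagonal come from the two-sided bound.

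\emph{Step 2: the tail bound \eqref{e:4.6}.} Split $B(x,r)^c$ into dyadic annuli $B(x,2^{k+1}r)\setminus B(x,2^kr)$ and use \eqref{e:4.15} on each:
\[
\int_{B(x,r)^c}J(x,y)\,m(dy)\le \sum_{k\ge0}\frac{C_2\,V(x,2^{k+1}r)}{V(x,2^kr)\,\phi_j(2^kr)} \le c\sum_{k\ge 0}\frac{1}{\phi_j(2^kr)} \le \frac{c'}{\phi_j(r)},
\]
where (VD) controls the volume ratio and the lower scaling in \eqref{polycon} (with $\beta_{1,\phi_j}>0$) makes the sum geometric. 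Since $\phi\le\phi_j$, we have $1/\phi_j(r)\le 1/\phi(r)$. By \eqref{e:4.2}, for $r<R$,
\[
\frac{1}{\phi(r)}\le c(R)\,r^{-\beta_{2,\phi}}.
\]
This proves \eqref{e:4.6} and hence ${\bf (Jt)}_{\leq,R,\beta_{2,\phi}}$.

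\emph{Step 3: (A4').} Take $m_B=m|_B$ and $J^B=J$, so $J(x,dy)=J(x,y)m(dy)$. The main work is the comparability.
\begin{itemize}
\item For $x\in 2^{-1}B$ and $y\in B^c$, with $B=B(x_0,R)$, we have $d(x,y)\asymp d(x_0,y)$ with factor $2$. Moreover $V(x,d(x,y))\asymp V(x_0,d(x_0,y))$ by (VD), since the two balls contain one another after enlargement by a bounded factor. By the doubling of $\phi_j$ from \eqref{polycon}, \eqref{e:J1} follows with uniform constants.
\item The same reasoning with $x\in pB$ and $y\in B\setminus qB$ gives $d(x,y)\asymp_{p,q}d(x_0,y)$, hence \eqref{e:J2}.
\item For \eqref{e:J3}: on $B(x_0,R/2)\setminus B(x_0,R/16)$, the quantity $V(x_0,d(x_0,y))\phi_j(d(x_0,y))$ lies between $V(x_0,R/16)\phi_j(R/16)$ and $V(x_0,R/2)\phi_j(R/2)$. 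Both are positive and finite, since $m$ has full support and balls are relatively compact.
\end{itemize}
All constants are uniform in $x_0$, so (A4') holds.

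The only delicate point is Step 1: we must check carefully that the diffusive term $p^{(c)}$ vanishes at order $o(t)$ off the diagonal. This uses $\bar\phi_c^{-1}(s)\to0$ as $s\to0$, via \eqref{e:1.12} and $\beta_{1,\phi_c}>1$. If that step is too fiddly, the fallback is to quote the corresponding result in \cite{CKW3}, where ${\rm HK}(\phi_c,\phi_j)$ is shown to imply the jump kernel bound $J_{\phi_j}$.
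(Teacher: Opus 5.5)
Your argument follows the paper's route. You take \eqref{e:4.15} from \cite{CKW3}; your small-time limit of $p(t,x,y)/(2t)$ is the standard way that implication is proved. Your dyadic-annulus sum is exactly the estimate of \cite[Lemma 2.1]{CKW1} that the paper quotes for \eqref{e:4.6}. And \eqref{e:J1}, \eqref{e:J2} follow, as you say, from \eqref{e:4.15}, (VD) and \eqref{polycon}, with constants depending only on the doubling and scaling constants (and on $p,q$).

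The step that does not go through is the closing claim that ``all constants are uniform in $x_0$'', as far as \eqref{e:J3} is concerned.
\begin{itemize}
\item The range \eqref{e:J1}--\eqref{e:J2} in (A4') contains \eqref{e:J3}. The paper uses it in the form $\inf_{x_0}\inf_y J(x_0,y)>0$ and $\sup_{x_0}\sup_y J(x_0,y)<\infty$, with $y$ ranging over $B(x_0,R/2)\setminus B(x_0,R/16)$. The two bounds enter separately in Lemma \ref{L:3.2}; compare the explicit checks of \eqref{e:4.27} and \eqref{e:4.41} in the proof of Proposition \ref{C6}.
\item Your bounds are $C_1/(V(x_0,R/2)\phi_j(R/2))$ and $C_2/(V(x_0,R/16)\phi_j(R/16))$. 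You show only that they are positive and finite for each fixed $x_0$. Uniformity requires $\sup_{x_0}V(x_0,R/2)<\infty$ and $\inf_{x_0}V(x_0,R/16)>0$.
\item (VD) and (RVD) control only ratios of volumes and give neither bound. For instance, $m(dx)=(1+|x|)^a\,dx$ on $\R$ with $0<a<1$ satisfies both, and even $V(x,r)\ge 2r$. Yet $V(x_0,R/2)\to\infty$ as $|x_0|\to\infty$, so \eqref{e:4.15} forces $\inf_y J(x_0,y)\to 0$.
\item In particular, the extra hypothesis $V(x,r)\ge cV_1(r)$ of Proposition \ref{C2} secures only the upper bound.
\end{itemize}

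The paper's one-line justification of (A4') has exactly the same gap, so the defect lies in the statement rather than in your method. What your argument (and the paper's) actually proves is (A4), with uniform constants in \eqref{e:J1}, \eqref{e:J2} and with $\sup_y J(x_0,y)/\inf_y J(x_0,y)$ bounded uniformly in $x_0$. The full (A4') holds once $0<\inf_{x_0}V(x_0,r)\le\sup_{x_0}V(x_0,r)<\infty$ for every $r>0$, as in Examples \ref{E:4.3} and \ref{E:4.4}.
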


\pf By Theorems 1.13 and 1.14 in \cite{CKW3}, \eqref{e:4.15} holds.
By  using \eqref{e:4.15}, \eqref{polycon} and  (VD) and (RVD) condition,   assumption (A4') holds.

By  \cite[Lemma 2.1]{CKW1},  there are constants $c_k, k=1, 2>0$ such that for every $x\in \sX$ and $r>0$,
\begin{equation}\label{e:4.7}
\int_{B(x, r)^c} J(x, y) m(dy)\leq c_1\int_{B(x, r)^c} \frac1{V(x, d(x,y)) \phi_j (d(x, y))} m(dy) \leq \frac{c_2}{\phi_j(r)}.
\end{equation}
 Hence \eqref{e:4.6} holds by \eqref{e:4.7} and \eqref{polycon}.
 Thus ${\rm \bf (Jt)}_{\leq, R, \beta_{2, \phi}}$ holds for $X$.
\qed

\medskip

\begin{proof}[Proof of  Proposition \ref{C2}.]

Since $X$ is the $m$-symmetric Hunt process associated with the regular Dirichlet form $(\mathcal E,\mathcal F)$ in \eqref{e:E} and satisfies
the condition ${\rm HK(\phi_c, \phi_j)}$, by the argument below assumption (A1), for each ball $B$ in $\sX,$ the subprocess $X^{B}$ is symmetric with respect to the measure $m$ restricted in $B,$ and for each $x\in \sX$ and $\alpha>0,$ the $\alpha$-resolvent measure  $G^{\alpha, B}(x, \cdot)$ of the subprocess $X^{B}$  is absolutely continuous with respect to $m(dy)$ in $B.$
As the reference measure $m(dx)$  is  an excessive measure of $X$,
it follows from \cite{Si} that every semi-polar set of $X$ is polar.  Hence  assumption (A1) holds.
By Lemmas \ref{L:4.2} and \ref{L:4.4},  assumptions (A3) and (A4') hold.
Moreover, by Lemmas  \ref{L:4.3} and \ref{L:4.4}, for each $R>0,$ the conditions ${\bf (EP)} _{\leq, R, \beta_{2, \phi}}$  and ${\rm \bf (Jt)}_{\leq, R, \beta_{2, \phi}}$ hold for $X.$
Thus assumption (A2') holds by Remark \ref{R:1.11} and Proposition \ref{P1}.
Hence assumptions (A1), (A2'), (A3) and (A4') hold. Thus the BHP \eqref{e:4.11} of  Proposition \ref{C2} holds by Theorem \ref{T0}.

Furthermore, if in addition we assume there exist a positive constant $c$ and a positive function $V_1$ on $\R_{+}$ such that
 $V(x, r)\geq cV_1(r)$ for any $x\in \sX$ and $r>0,$  then by Lemma \ref{L:4.2},  assumption (A3') holds.
Note that  assumption (A5) holds by Lemma \ref{L:4.1}.
By combining the argument above, assumptions (A1), (A2')-(A4') and (A5) hold.
Hence in this case, by Theorem \ref{T0},  BHP \eqref{e:4.11} holds
with $C=C(R)\geq 1$.
  \end{proof}

\subsection{Non-symmetric diffusion with jumps in non-divergence form in $\R^d$}

Consider the following type of non-local operators on $\R^d (d\geq 3)$:
 \begin{equation}\label{e:L}
\L^b f(x):=\L^0 f(x)+b(x)\cdot \nabla f(x) + \S f(x),
\end{equation}
where $\L^0$ is an elliptic operator of second order in non-divergence form,
that is,
\begin{equation}\label{e:nd}
\L^0 f(x):=\sum_{i, j=1}^d a_{ij}(x)\dfrac{\partial^2}{\partial x_i\partial x_j}f(x),
\end{equation}
and
\begin{equation}\label{e:1.4}
\S f(x):=
\int_{\R^d} \left( f(x+z)-f(x)- \nabla f(x) \cdot
z \1_{\{|z|\leq 1\}}  \right) j(x, z) \,dz, \quad f\in C_b^2(\R^d).
\end{equation}

We assume  $(a_{ij}(x))_{1\leq  i, j\leq d}$ is a symmetric $d\times d$ matrix-valued function on $\R^d$
that is   uniformly bounded, elliptic and H\"older continuous, the function  $b$ belongs to the Kato class $K_d^1$; that is,
\begin{equation}\label{e:b}
\lim_{r\rightarrow 0}\sup_{x\in\R^d}\int_{B(x, r)} |y-x|^{1-d}|b|(y) dy=0.
\end{equation}
Assume $j(x, z)$ is a nonnegative locally bounded  function on $\R^d\setminus \{0\}$ such  that
$\int_{\R^d}  (1\wedge | z|^2) j(x, z)\,dz<\infty$ and there exist constants $\beta\in (0, 2)$ and  $c_0>0$ so that
\begin{equation}\label{e:j}
j(x, z) \leq \frac{c_0}{ |z|^{d+\beta} }  \quad \hbox{for }  z\in \R^d\setminus \{0\}.
\end{equation}
 By \cite{CHXZ}, there is an irreducible  Hunt process $X^b$ that has a jointly continuous  transition density function $p^b(t,x,y)$ with respect to the Lebesgue measure of $\R^d.$ The Hunt process $X^b$ has a L\'evy system $(J(x, y)\,dy, dt),$ where $J(x, y):=j(x, y-x).$

 \begin{prp}\label{C6}
Let $D$ be an  open subset in $\R^d (d\geq 3)$ and  $R_0>0$.
Assume for each $x\in\R^d,$ $z\mapsto j(x, z)$ is positive and for each  $R\in (0, R_0)$ and $x_0\in\R^d,$
\begin{equation}\label{e:4.27}
\inf_{z\in \R^d:  R/16\leq |z|<R/2} j(x_0, z)>0.
\end{equation}
Suppose  for each  $x_0\in\R^d, R\in (0, R_0)$ and $0<p<q\leq 1,$ there exists $c_1=c_1(x_0, R, p, q)>1$  such that  for any $x\in B(x_0, pR)$ and $y\in B(x_0, qR)^c,$
\begin{equation}\label{e:4J}
c_1^{-1}J(x_0, y)\leq J(x, y) \leq c_1J(x_0, y).
\end{equation}
Then  for each $z_0\in \partial D$ and $R\in (0, \tfrac{1}{16}R_0),$
there exists a constant  $C=C(z_0, R)\geq 1$   independent of the geometry of $D$ such that   for   any  nonnegative  regular harmonic functions $f$ and $g$
on $D\cap B(z_0, R)$   vanishing  on  $(D^c)^r\cap B(z_0, R),$
\begin{equation}\label{e:Xb}
f(x)g(y)\leq Cf(y)g(x)  \quad \hbox{for } x, y\in D\cap B(z_0, R/32).
\end{equation}
Furthermore, if \eqref{e:4.27} and \eqref{e:4J} hold uniformly for $x_0\in\R^d,$
then BHP \eqref{e:Xb} holds with $C=C(R)\geq 1$ uniformly for $z_0\in \partial D$.
\end{prp}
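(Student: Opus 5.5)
The plan is to verify the hypotheses of Theorem \ref{T0} for $X=X^b$ and then apply it. For the uniform statement we additionally verify (A2'), (A3'), (A4') and (A5).

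\emph{(A1).} By \cite{CHXZ}, $X^b$ has a jointly continuous transition density $p^b(t,x,y)$ with respect to Lebesgue measure, and it has Gaussian-type two-sided bounds for small time. I expect the same to give existence of a dual process: the function $p^b(t,y,x)$ defines a sub-Markovian semigroup in duality with $X^b$ relative to Lebesgue measure $m$. By the remark after (A1), the killed processes $X^B$ and $\wh X^B$ are then in weak duality relative to $m_B=m|_B$. The absolute continuity of the resolvents follows from the existence of the density. Hunt's hypothesis holds for processes with a dual and a continuous transition density; I would cite the standard result (e.g. \cite{BG}, or \cite{Si}-type arguments) that (K) holds when the resolvent has a dual with densities.

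\emph{(A4).} Here $J^B(x,y)=j(x,y-x)$ with $m_B$ equal to Lebesgue measure. Positivity of $j$ gives strict positivity of $J^B$. Conditions \eqref{e:J1} and \eqref{e:J2} are exactly \eqref{e:4J}. The infimum in \eqref{e:J3} is \eqref{e:4.27}, and the supremum follows from \eqref{e:j}. Uniformity in $x_0$ carries over directly from the uniform versions of \eqref{e:4.27} and \eqref{e:4J}.

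\emph{(A3) and (A5).} The small-time heat kernel estimates of \cite{CHXZ} give $p^b(t,x,y)\le c\,t^{-d/2}e^{-c|x-y|^2/t}+c\,t|x-y|^{-d-\beta}$ for $t\le1$. Integrating over $t\in(0,1]$ and using $d\ge3$ gives a uniform bound on $\int_0^1p^b(t,x,y)\,dt$ for $|x-y|\ge R/16$. Exponential decay of $\P_x(\tau_B>t)$ (from the uniform upper bound on $\E_x\tau_B$ and the Markov property) controls the part $t>1$. Together these bound $G^B(x,y)$ on $B(x_0,R/16)\times A(x_0,R/8,R/2)$ uniformly in $x_0$, which gives (A3'). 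For (A5), I would use the near-diagonal lower bound $p^b(t,x,y)\ge c\,t^{-d/2}$ for $|x-y|\le\sqrt t$, $t\le t_0$. This gives $\inf_x\P_x(X_{t_0}\notin B(x,R))\ge\delta>0$ for a suitable $t_0$, so $\sup_x\P_x(\tau_{B(x,R)}>t_0)\le1-\delta$. Iterating by the Markov property yields $\sup_x\E_x\tau_{B(x,R)}\le C(R)$.

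\emph{(A2') via Proposition \ref{P1}.} This is the main obstacle, since $X^b$ is not symmetric and we need the dual conditions $\bf(\wh{EP})$ and $\bf(\wh{Jt})$.
\begin{itemize}
\item $\bf(Jt)_{\le,R,\beta}$ follows from \eqref{e:j} together with \eqref{e:4J}, which controls $J$ on large jumps. Note that $\int_{|z|\ge r}j(x,z)\,dz\le c\,r^{-\beta}$ for $r\le R$.
\item $\bf(EP)_{\le,R,2}$ follows from the standard test-function argument: apply the martingale problem for $\L^b$ from \cite{CHXZ} to a smooth bump $\varphi$ equal to $1$ at $x$ and supported in $B(x,r)$. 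The bounds $|\L^0\varphi|\le cr^{-2}$ and $|\S\varphi|\le cr^{-2}$ for $r\le R$ handle the second-order and jump parts. The drift term needs the Kato condition, and here I would instead use the heat kernel upper bound to estimate $\P_x(\tau_{B(x,r)}<t)$ by $\sup_{s\le t}\P(|X_s-x|>r/2)$, via the usual Lévy-type maximal inequality. That yields $Ct/r^{\gamma}$ with $\gamma=\max(2,\beta)$.
\item $\bf(\wh{Jt})$: since $\wh J^B(x,dy)=J(y,x)\,dy$, we need $\int_{(B/2)\setminus B(x,r)}j(y,x-y)\,dy\le c\,r^{-\beta}$. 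This follows again from \eqref{e:j}, because the bound is uniform in the first variable.
\item $\bf(\wh{EP})$: the dual killed process has transition density $p^B(t,y,x)$, which is bounded by $p^b(t,y,x)$. The same maximal-inequality argument should therefore give $\P_x(\wh\tau^B_{B(x,r)}<t\wedge\wh\tau^B_{B/2})\le Ct/r^\gamma$, using only the two-sided Gaussian-plus-stable upper bound. This symmetry of the upper bound in $x,y$ is what makes the argument go through.
\end{itemize}

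With all assumptions verified, Theorem \ref{T0} yields \eqref{e:Xb}, and its second part yields the uniform constant $C=C(R)$.
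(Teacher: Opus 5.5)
Your overall plan matches the paper's: verify (A1)--(A5) for $X^b$, get (A2') from Proposition \ref{P1}, and apply Theorem \ref{T0}. However, your verification of (A1) rests on a false claim, and every dual-side estimate you give inherits it.

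If $X^b$ were in weak duality with a Hunt process relative to Lebesgue measure, then $\int\alpha G^{\alpha}f\,dx=\int f\,\alpha\wh G^{\alpha}1\,dx\le\int f\,dx$. In other words, Lebesgue measure would have to be excessive for $X^b$. For operators of the form \eqref{e:L} this fails in general. Take $b=0$, $a_{ij}=a(x)\delta_{ij}$ with $a$ smooth, bounded between two positive constants and $\Delta a>0$ somewhere, and $j(x,z)=|z|^{-d-\beta}$; all hypotheses of the proposition hold. For $0\le g\in C_c^\infty(\R^d)$ one has $\int\L^b g\,dx=\int g\,\Delta a\,dx$, which is positive when $g$ is supported where $\Delta a>0$. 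Hence $\int \E_x[g(X^b_t)]\,dx>\int g\,dx$ for small $t$. So $p^b(t,y,x)$ does not define a sub-Markovian dual semigroup, and the remark after (A1) cannot be used.

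The missing idea is the paper's choice of reference measure. For each ball $B$ it takes $m_B=\xi_B$, with $\xi_B(dx)=h_B(x)\,dx$ and $h_B(x)=\int_BG^b_B(y,x)\,dy$, which is excessive for $X^{b,B}$. A Hunt dual then comes from Liao's theorem via properties (H1)--(H5), which follow from the two-sided Green function bounds of Proposition \ref{P:G}. By comparison with the Green function of $\Delta$, $h_B\asymp1$ on $\tfrac12B$ uniformly in the center.

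This weight changes all your dual-side verifications:
the jump density with respect to $m_B$ becomes $J(x,y)/h_B(y)$;
the dual jump kernel becomes $J(y,x)h_B(y)h_B(x)^{-1}\,dy$;
the dual transition density becomes $p^b_B(t,y,x)h_B(y)/h_B(x)$.
Since $h_B$ vanishes at $\partial B$, these quantities are controlled only while the dual process stays in $\tfrac12B$. That is why $(\wh{Jt})$ and $(\wh{EP})$ are formulated inside $\tfrac12B$, only up to $\wh\tau^B_{2^{-1}B}$. It is also why the maximal-inequality argument for $(\wh{EP})$ needs the localization of Lemma \ref{L:4.6}: splitting on $\{s/2<\wh\tau\}$ and summing over dyadic ranges of $\wh\tau$. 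The symmetry of the heat kernel upper bound alone does not suffice.

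Separately, your justification of Hunt's hypothesis is not a valid citation. Duality plus absolutely continuous resolvents does not imply it. Uniform motion on $\R$ is dual to uniform motion to the left relative to Lebesgue measure and has absolutely continuous resolvents, yet every point is semipolar but not polar. Even for L\'evy processes, which always have duals, Hunt's hypothesis is known only under extra structure. The paper instead argues in three steps:
$p^b(t,\cdot,\cdot)$ is dominated, for finite $t$, by the heat kernel of $\Delta+\Delta^{\beta/2}$ \cite{CHXZ};
that symmetric L\'evy process satisfies Hunt's hypothesis \cite[Theorem 2.1]{HS1};
the comparison theorem \cite[Theorem 1.1]{HS2} then transfers it to $X^b$.

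Your heat-kernel treatments of (EP), (Jt), (A3) and (A5) are acceptable alternatives to the paper's use of Proposition \ref{P:G}. They need two adjustments: account for the weight $h_B$, and use $r^{-\beta}\le R^{2-\beta}r^{-2}$ to obtain a common $\gamma=2$.
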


\begin{exa}\label{E:4.9}
Suppose  there are positive constants $c_k, k=1, 2$ so that
$$
 \frac{c_1 }{|z|^d\phi(|z|)}\leq
j(x, z)  \leq \frac{c_2 }{|z|^d\phi(|z|)}
\quad \hbox{for   } (x, z) \in \R^d\times \R^d,
$$
where
$\phi$ is a  strictly increasing function on $[0, \infty)$
such that
$\phi (0)=0$, $\phi (1)=1$  and
there exist constants $c_3 \ge1$ and
 $0<\aa_* \leq \aa^*<2$
so that
$$
c_3^{-1}  \Big(\frac Rr\Big)^{\aa_*} \le
\frac{\phi (R)}{\phi (r)} \le c_3\Big(\frac
Rr\Big)^{\aa^*}
\quad \hbox{for every } 0<r<R<\infty.
$$
It is easy to check that \eqref{e:4.27} and \eqref{e:4J} hold uniformly for $x_0\in\R^d.$
By  Proposition \ref{C6}, the BHP
\eqref{e:Xb} holds with $C=C(R)\geq 1$.

\end{exa}

In the following, we give the proof of   Proposition \ref{C6}.
 By \cite{CHXZ},  there are  positive constants
$c_k $, $k=1, 2, 3$
 so that for $t>0,$
\begin{equation}\label{e:p}
p^b(t,x,y) \leq c_1e^{c_2t} (t^{-d/2}\wedge t^{-d/\beta})\wedge \left(t^{-d/2} e^{-c_3|y-x|^2/t}+t|x-y|^{-(d+\beta)}\right) \quad \hbox{for} \quad
x, y\in \R^d.
\end{equation}

For each ball $B,$ denote by $G^b_B(x, y)$ the Green function of $X^b$ in $B$ with respect to the Lebesgue measure.
By \cite[Lemmas 4.7 and 4.9]{CW1}, we have the following result.

\begin{prp}\label{P:G}
For each $R>0,$ there is a constant
  $C=C(\sL^b, R)>1$    such that for each ball $B$ in $\R^d$  with radius $R>0$,
 $$
\frac{C^{-1}} {|x-y|^{d-2}} \left(1\wedge \frac{\delta_B(x)}{|x-y|}\right) \left(1\wedge \frac{\delta_B(y)}{|x-y|}\right)
 \leq G^b_B (x, y)
 \leq  \frac{C} {|x-y|^{d-2}} \left(1\wedge \frac{\delta_B(x)}{|x-y|}\right) \left(1\wedge \frac{\delta_B(y)}{|x-y|}\right)
$$
for every $ x\not=y\in B$.
\end{prp}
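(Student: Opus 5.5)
The statement is cited from \cite[Lemmas 4.7 and 4.9]{CW1}, but if I had to prove it myself I would use the standard perturbation approach for Green functions of $\L^0+b\cdot\nabla+\S$ on a ball. I would start from the diffusion $Y$ generated by $\L^0$, which has H\"older continuous, uniformly elliptic coefficients. For such a diffusion, the Green function $G^0_B$ of a ball $B$ of radius $R$ in $\R^d$ ($d\ge3$) satisfies the classical two-sided estimate
\[
G^0_B(x,y)\asymp |x-y|^{2-d}\Big(1\wedge \tfrac{\delta_B(x)}{|x-y|}\Big)\Big(1\wedge\tfrac{\delta_B(y)}{|x-y|}\Big).
\]
This follows from Gaussian bounds for the Dirichlet heat kernel in $C^{1,1}$ domains, or from Gr\"uter--Widman type estimates together with the boundary Harnack principle on smooth domains. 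The constants depend only on $R$, the ellipticity bound and the H\"older norm, and they are uniform over the center of $B$ by translation invariance of the hypotheses.

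Next I would add the drift. Since $b\in K^1_d$, I would use the 3G-type inequality for the function $g(x,y)$ on the right-hand side:
\[
\frac{g(x,z)g(z,y)}{g(x,y)}\le c\big(|x-z|^{1-d}+|z-y|^{1-d}\big)\ \text{(suitably weighted near the boundary)}.
\]
The Kato condition makes the perturbation series $G^b=\sum_n G^0(b\cdot\nabla G^0)^n$ converge on small balls. This gives $G^b_B\asymp G^0_B$ for radius below some $r_0$, in the spirit of Cranston--Fabes--Zhao and Kim--Song for drift perturbations. For a general radius $R$, the comparison would instead be obtained via a Girsanov/Harnack chain argument, which relies on boundedness of $B$.

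Then I would add the jump part $\S$. Its kernel obeys $j(x,z)\le c_0|z|^{-d-\beta}$ with $\beta<2$, so $\S$ is a lower-order perturbation on bounded sets. I would split $j=j\1_{\{|z|<\varepsilon\}}+j\1_{\{|z|\ge\varepsilon\}}$. The small-jump part is treated like the drift, through a 3G estimate with the kernel $|z|^{-d-\beta}$. The large-jump part is a bounded kernel, handled by the Meyer construction (interlacing) or by a resolvent identity. Throughout, the killing on exiting $B$ is kept in mind: the jump part can only decrease the Green function relative to a suitable comparison.

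For the lower bound, I would get the interior estimate $G^b_B(x,y)\ge c|x-y|^{2-d}$ for points away from $\partial B$ from the heat kernel bounds \eqref{e:p} and the matching near-diagonal lower bound in \cite{CHXZ}. The boundary decay factors would then come from a boundary Harnack/exit-time argument on balls. The upper bound on the boundary decay factors would use barrier functions such as $(R^2-|x-x_0|^2)$, applied through Dynkin's formula; these are admissible since $\L^b$ applied to such a smooth function is controlled.

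The main obstacle is the boundary behaviour in $y$. The dual process is not a nice diffusion, so the factor $1\wedge\delta_B(y)/|x-y|$ cannot simply be read off from a Harnack inequality. I would handle it with the 3G inequality in the perturbation series, which preserves both boundary factors simultaneously. This is exactly the route taken in \cite{CW1}, so I would defer to those lemmas for the details.
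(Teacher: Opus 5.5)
Your proposal agrees with the paper in substance. The paper gives no argument for Proposition~\ref{P:G}; it simply imports the estimate from \cite[Lemmas 4.7 and 4.9]{CW1}, and your proposal ultimately rests on that same citation.

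The perturbation outline you add is only heuristic and should be presented as motivation, not as a proof. Nothing in this paper confirms that it is the route of \cite{CW1}. Gr\"uter--Widman estimates concern divergence-form operators, not non-divergence-form ones like $\L^0$. Finally, jumps inside $B$ can increase as well as decrease the Green function, so your remark that the jump part ``can only decrease'' it would need to be replaced by a genuine comparison argument.
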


Let $B$ be a ball
in $\R^d$. Denote by $X^{b,B}$ the subprocess of $X^b$ killed upon exiting $B.$
By \eqref{e:p} and an argument similar to that of \cite[Theorem 3.4]{CKS}, $X^{b,B}$ has a
jointly continuous transition density
$p^b_B(t,x,y)$ for $x, y\in B$ with $x\neq y$ with respect to the Lebesgue measure. Using\vadjust{\goodbreak} the continuity of $(x, y)\mapsto p^b_B(t,x,y)$ and the
estimate \eqref{e:p}, we have the following result.

\begin{prp}\label{pHC}
$X^{b,B}$ is a Hunt process, and it satisfies the strong Feller
property, that is, for every $f \in L^{\infty}(B)$,
$P^B_tf(x):=\E_x[f(X^{b,B}_t)]$ is bounded and continuous in $B$.
\end{prp}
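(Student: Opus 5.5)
The statement to prove is Proposition~\ref{pHC}: $X^{b,B}$ is a Hunt process and it is strong Feller. For the Hunt property I will invoke the general fact already used after (A1): the killed subprocess of a Hunt process upon exiting an open set is again a Hunt process, see \cite[Exercise 3.3.7]{CF}. So the only substance is the strong Feller property. My plan is to represent
\[
P^B_t f(x)=\int_B p^b_B(t,x,y)f(y)\,dy
\]
and derive boundedness and continuity from properties of the kernel.

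Boundedness is immediate from $0\le p^b_B\le p^b$ and \eqref{e:p}. Indeed,
\[
|P^B_t f(x)|\le \|f\|_\infty \int_{\R^d} p^b(t,x,y)\,dy\le \|f\|_\infty,
\]
since $p^b(t,x,\cdot)$ is a sub-probability density.

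For continuity at $x_0\in B$, take $x_n\to x_0$ in $B$. By the joint continuity of $p^b_B(t,\cdot,\cdot)$ off the diagonal, $p^b_B(t,x_n,y)\to p^b_B(t,x_0,y)$ for every $y\neq x_0$, hence for Lebesgue-a.e.\ $y$. To pass the limit inside the integral I need a dominating function. Restrict to $x_n\in B(x_0,\delta)$ with $\overline{B(x_0,\delta)}\subset B$. Then \eqref{e:p} gives, uniformly in $n$,
\[
p^b_B(t,x_n,y)\le p^b(t,x_n,y)\le c_1e^{c_2t}\,(t^{-d/2}\wedge t^{-d/\beta})=:M_t,
\]
and $B$ is bounded. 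So $M_t\1_B\|f\|_\infty$ is integrable and dominated convergence yields $P^B_tf(x_n)\to P^B_tf(x_0)$. This works even for $y$ near the diagonal because the on-diagonal bound $M_t$ is finite for fixed $t>0$.

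The main point to check is that the kernel really is jointly continuous off the diagonal (the text gives this only for $x\ne y$). The exceptional set $\{y=x_0\}$ is Lebesgue-null, so a.e.\ convergence suffices. If continuity were only available away from $\partial B$, I would split $B$ into a compact core and a thin boundary layer. The layer's contribution is at most $M_t|{\rm layer}|\,\|f\|_\infty$, which can be made uniformly small.

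A more robust fallback is to use the Chapman--Kolmogorov identity
\[
P^B_tf=P^B_{t/2}\bigl(P^B_{t/2}f\bigr),
\]
so that $P^B_tf$ is an integral of the bounded function $P^B_{t/2}f$ against $p^b_B(t/2,x,\cdot)$. The same dominated convergence argument, with the bound \eqref{e:p} at time $t/2$, then gives continuity.
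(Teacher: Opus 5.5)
Your proposal is correct and follows the same route as the paper. The Hunt property comes from the standard fact that a killed Hunt process is Hunt, and the strong Feller property follows from the joint continuity of $p^b_B(t,\cdot,\cdot)$ off the diagonal together with the uniform bound $c_1e^{c_2t}(t^{-d/2}\wedge t^{-d/\beta})$ from \eqref{e:p}, by dominated convergence on the bounded set $B$. Your boundary-layer and Chapman--Kolmogorov fallbacks are unnecessary, and the latter relies on the same kernel continuity anyway.
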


Define
\[
h_B(x) := \int_B G^b_B(y,x) \,dy \quad\mbox{and}\quad \xi
_B(dx):=h_B(x) \,dx.
\]

 \begin{prp}\label{em}
Let $B=B(x_0, R)$ be a ball in $\R^d.$
Then $\xi_B$ is an excessive measure for $X^{b,B}$, that is, for every
Borel function $f \ge0$,
$$
\int_B f(x) \xi_B(dx) \ge\int_B \E_x[f(X^{b,B}_{t})]
\xi_B(dx).
$$
Moreover, $h_B$ is a strictly positive, bounded continuous function on $B$ and
there exists  $C=C(d, R)>1$ independent of $x_0\in\R^d$ so that
\begin{equation}\label{e:h}
C^{-1}\leq h_B(x)\leq C \quad \hbox{for }   x\in \tfrac{1}{2}B.
\end{equation}
\end{prp}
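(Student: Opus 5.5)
The proof has two parts. First I show that $\xi_B$ is excessive for $X^{b,B}$, using the potential representation of $h_B$. Then I establish regularity and two-sided bounds for $h_B$ from the Green function estimates in Proposition \ref{P:G}.

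For excessiveness, fix a Borel $f\geq 0$ on $B$. By Fubini and the definition of the Green function,
\[
\int_B \E_x[f(X^{b,B}_t)]\,\xi_B(dx)=\int_B\int_B G^b_B(y,x)\,P^B_tf(x)\,dx\,dy=\int_B G^b_B(P^B_tf)(y)\,dy .
\]
By the Markov property,
\[
G^b_B(P^B_t f)(y)=\E_y\int_t^\infty f(X^{b,B}_s)\,ds\leq \E_y\int_0^\infty f(X^{b,B}_s)\,ds=G^b_Bf(y).
\]
Integrating in $y$ and using Fubini again gives $\int_B G^b_Bf(y)\,dy=\int_B f(x)h_B(x)\,dx$, which is the desired inequality. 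This step needs only that $G^b_B$ is the potential density of $X^{b,B}$ with respect to Lebesgue measure, which was established above.

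For the bounds, I use Proposition \ref{P:G} with $R$ the radius of $B$. For the upper bound, drop the boundary factors:
\[
h_B(x)\leq C\int_{B}|x-y|^{2-d}\,dy\leq C\int_{B(x,2R)}|x-y|^{2-d}\,dy=c(d)\,C R^2
\]
for all $x\in B$. Hence $h_B$ is bounded on $B$, uniformly in $x_0$. For the lower bound, take $x\in\tfrac12 B$ and restrict the integral to $y\in B(x,R/8)\setminus B(x,R/16)\subset \tfrac34B$. There $\delta_B(x),\delta_B(y)\geq R/4\geq |x-y|$, so both boundary factors equal $1$ and $G^b_B(y,x)\geq C^{-1}(R/8)^{2-d}$. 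Integrating over this annulus gives $h_B(x)\geq c(d)\,C^{-1}R^2$. The constant in Proposition \ref{P:G} depends only on $R$ and not on the center, so the constant in \eqref{e:h} is independent of $x_0$. Strict positivity on all of $B$ follows the same way: for $x\in B$, take $y$ in a small annulus around $x$ of radius comparable to $\delta_B(x)$ and apply the lower bound of Proposition \ref{P:G}.

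The main obstacle is continuity of $h_B$. Write
\[
h_B(x)=\int_B G^b_B(y,x)\,dy=\int_0^\infty\int_B p^b_B(t,y,x)\,dy\,dt,
\]
and truncate in time and near the diagonal. For fixed $\varepsilon>0$, the integral $\int_\varepsilon^T\int_B p^b_B(t,y,x)\,dy\,dt$ is continuous in $x$ by the joint continuity of $p^b_B$ and dominated convergence. Here \eqref{e:p} gives a uniform bound for $t\geq\varepsilon$, and $B$ is bounded.

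The small-time piece $\int_0^\varepsilon\int_B p^b(t,y,x)\,dy\,dt$ is at most $c\,\varepsilon\, e^{c_2\varepsilon}$, uniformly in $x$. This holds because $\int p^b(t,y,x)\,dy$ is bounded for small $t$, by integrating the bound \eqref{e:p} in $y$.

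The large-time tail is uniformly small because of exponential decay of $\sup_y\P_y(\tau_B>t)$. This decay follows from a uniform bound on $\sup_y \E_y\tau_B$, which the Green function bound gives, combined with the Markov property. So the tail, and hence $\int_T^\infty\int_B p^b_B\,dy\,dt$, is uniformly small for large $T$.

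A uniform limit of continuous functions is continuous, which gives continuity of $h_B$. Alternatively, one could write $h_B(x)=\int_B G^b_B(y,x)\,dy$ and use that $x\mapsto G^b_B(y,x)$ is continuous off the diagonal, the latter coming from the dual strong Feller property. That route still needs the same uniform integrability near the diagonal, which the estimate $|x-y|^{2-d}$ supplies.
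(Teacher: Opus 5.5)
Your excessiveness argument (Fubini plus $G^b_B P^B_t f\le G^b_B f$) is correct. Your proof of \eqref{e:h} is also correct, and it takes a more direct route than the paper. The paper compares $G^b_B$ with the Laplacian Green function $G^\Delta_B$ via \cite{GW, Zhao}, so that $h_B(x)\asymp\E_x\tau^W_B$. It then uses Brownian scaling together with $B(x,R/2)\subset B\subset B(x,2R)$ for $x\in\tfrac12B$. You instead integrate the two-sided bound of Proposition \ref{P:G} directly: you drop the boundary factors for the upper bound, and for the lower bound you restrict to $B(x,R/8)\setminus B(x,R/16)$, where both factors equal $1$. This avoids the external comparison result and gives the same independence from the center. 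The paper does not prove excessiveness, positivity, boundedness or continuity at all; it defers them to \cite[Proposition 5.2]{CKS}. So your write-up is more self-contained, and the only place it needs repair is continuity.

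The gap is in the large-time tail of your continuity argument. Since $\P_y(\tau^b_B>t)=\int_B p^b_B(t,y,x)\,dx$ integrates the kernel in its \emph{second} variable, exponential decay of $\sup_y\P_y(\tau^b_B>t)$ controls $\int_T^\infty\int_B p^b_B(t,y,x)\,dx\,dt$ uniformly in $y$. But $h_B(x)$ integrates in the \emph{first} variable, and you need smallness uniformly in $x$. For the non-symmetric $X^{b,B}$ this is not a formality: $h_B$ is the expected lifetime of the dual process, not of $X^{b,B}$. So the ``hence'' in your tail step does not follow as written.

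The repair is one line. By Chapman--Kolmogorov and \eqref{e:p}, for $t>1$,
\[
\int_B p^b_B(t,y,x)\,dy=\int_B\int_B p^b_B(t-1,y,z)\,p^b_B(1,z,x)\,dz\,dy\le c_1e^{c_2}\,|B|\,\sup_{y\in B}\P_y(\tau^b_B>t-1),
\]
which decays exponentially, uniformly in $x$. With this, $h_B$ is a uniform limit of your continuous truncations, and the argument closes. Your small-time piece is fine as written, because the bound \eqref{e:p} is symmetric in its space variables.

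Also, the alternative you mention via the ``dual strong Feller property'' would be circular here. The dual $\wh X^{b,B}$ is only constructed afterwards, in Theorem \ref{dualpr}, from (H1)--(H5), which themselves rest on this proposition. Off-diagonal continuity of $x\mapsto G^b_B(y,x)$ would have to come from the joint continuity of $p^b_B$ instead.
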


\pf By Proposition \ref{P:G} and an argument similar to that of \cite[Proposition 5.2]{CKS}, it suffices to prove \eqref{e:h}.
Let $B=B(x_0, R)$ be a ball in $\R^d.$
 By  Proposition \ref{P:G}  and \cite{GW, Zhao}, the Green function $G^b_B$ is  comparable
 to the Green function  $G^\Delta_B$ of the Laplace operator $\Delta$ in $B.$
Denote by $\tau^W_B$ the first exit time of Brownian motion $W$ from $B.$
Then for each $x\in B,$
$$h_B(x)= \int_B G^b_B(y,x) \,dy\asymp   \int_B G^\Delta_B(y,x) \,dy=  \E_x \tau^W_B,$$
where the multiplicative constants are independent of the center $x_0$ of the ball $B(x_0, R).$
 By Brownian scaling, there is a positive constant $c =c(d)>0$ so that for each $x\in \tfrac{1}{2}B,$
$$
c \, (R/2)^2 =  \E_x \tau^W_{B(x, R/2)} \leq \E_x\tau^W_B\leq \E_x \tau^W_{B(x, 2R)} = c\, (2R)^2 .
$$
Hence  \eqref{e:h} holds.
\qed

\smallskip

We define a transition density with respect to the reference measure
$\xi_B$ by
\[
\overline{p}^{b, B}(t,x,y) := \frac{ p^b_B(t,x,y)}{h_B(y)}.
\]
Let
\[
\overline{G}{}^b_B(x,y) :=
\int^{\infty}_0 \overline{p}^{b, B}(t, x, y)\,dt = \frac{ G^b_B(x,y)}{h_B(y)}.
\]
 Then $\overline{G}{}^b_B(x,y)$ is
the Green function of $X^{b,B}$ with respect to the reference measure~$\xi_B$.
For   each open set $U$ in $\R^d,$ define $\tau^b_U:=\inf\{t>0: X^b_t\notin U\}.$

\begin{defn}\label{dhar}
Let $B$ be a ball in $\R^d.$
Suppose that $U$ is an open subset of $B$. A Borel function $u$ on
$B$ is said to be:
\begin{itemize}
\item harmonic in $U$ with respect to $X^{b, B}$ if
\begin{equation}\label{AV}
u(x) = \E_x[u(X^{b, B}_{\tau^b_{V}})],\qquad
x\in V,
\end{equation}
for every bounded open set $V$ with $\overline{V}\subset U$;

\item excessive with respective to $X^{b, B}$ if $u$ is
nonnegative and for every $t > 0$ and $x\in B$
\[
u(x) \geq \E_x[u(X^{b, B}_{t})]
\quad\mbox{and}\quad u(x) = \lim_{t \downarrow0}
\E_x[u(X^{b, B}_t)];
\]

\item a potential with respect to $X^{b, B}$ if it is excessive
with respect to $X^{b, B}$ and for every sequence $\{U_n\}_{n \ge1}$
of open sets with
$\overline{U}_n \subset U_{n+1}$ and $\bigcup_n U_n = B$,
\[
\lim_{n \to\infty}
\E_x[u(X^{b, B}_{\tau^b_{U_n}})] = 0,\qquad
\xi_B\mbox{-a.e. } x\in B;
\]

\item a pure potential with respect to $X^{b, B}$ if
it is a potential with respect to $X^{b, B}$ and

\[
\lim_{t \to\infty} \E_x[u(X^{b, B}_t)] = 0,\qquad
\xi_B \mbox{-a.e. } x\in B.
\]

\end{itemize}
\end{defn}

By Propositions \ref{P:G} and \ref{em} and an argument similar to that of \cite[Theorem 5.4]{CKS}, the following properties of the Green function $\overline{G}{}^b_B(x,y)$
of $X^{b, B}$ with respect to $\xi_B$ holds:
\begin{itemize}
\item[(H1)]
$\overline{G}{}^b_B(x,y) > 0$ for all $(x,y) \in B \times B$;
$\overline{G}{}^b_B(x,y) =\infty$ if and only if $x=y \in B$.
\item[(H2)]
For every $x \in B$, $\overline{G}{}^b_B(x, \cdot)$ and
$\overline{G}{}^b_B( \cdot,x)$ are extended continuous in~$B$.
\item[(H3)]
For every compact subset $K$ of $B$, $\int_K \overline{G}{}^b_B(x,y)
\xi_B(dy) < \infty$.

\item[(H4)] For every $y \in B$, $x\mapsto \overline{G}{}^b_B(x, y)$ is a pure potential with respect to $X^{b, B}$.
\item[(H5)] For every compact subset $K$ of $B$, $\int_K
\overline{G}{}^b_B(x,y) \xi_B(dx) < \infty$.
\end{itemize}
Using (H1)--(H5) and Proposition \ref{P:G}, we get from
\cite{L} that for each ball $B,$ $X^{b, B}$ has a Hunt process as a dual.
Denote by $P^{b, B}_t$ the semigroup of $X^{b, B}$.
\begin{thm} \label{dualpr}
For each ball $B,$ there exists a transient Hunt process $\wh X^{b, B} $ in $B$ such
that $\wh X^{b, B} $ is a strong dual\vadjust{\goodbreak} of $X^{b, B}$ with respect to
the measure $\xi_B$; that is, the density of the semigroup $\{ \wh
P^{b, B}_t\}_{t\geq0}$ of $\wh X^{b, B}$ with respect to $\xi_B(dy)$ is given by
$\wh p^{b, B}(t, x, y):=\overline{p}^{b, B}(t,y,x)$ and thus
\[
\int_B f(x) P^{b, B}_t g(x) \xi_B(dx)=\int_B g(x) \wh P^{b, B}_t f(x) \xi_B(dx)
\qquad\mbox{for all } f, g\in L^2(B, \xi_B).
\]
\end{thm}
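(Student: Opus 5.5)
The plan is to define the dual semigroup explicitly from the kernel $\overline{p}^{b,B}$, check that it is a sub-Markovian semigroup in duality with $P^{b,B}_t$ relative to $\xi_B$, and then appeal to the result of \cite{L}, in the same way as \cite[Theorem 5.5]{CKS}. That result produces a Hunt realization of such a dual semigroup from a Green function satisfying (H1)--(H5). For $t>0$ and $x,y\in B$ set $\wh p^{b,B}(t,x,y):=\overline{p}^{b,B}(t,y,x)$ and $\wh P^{b,B}_tf(x):=\int_B \wh p^{b,B}(t,x,y)f(y)\,\xi_B(dy)$.

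First I will record the elementary properties of $\wh p^{b,B}$.
\begin{enumerate}
\item[(a)] \emph{Semigroup property.} The Chapman--Kolmogorov equation for $\wh p^{b,B}$ with respect to $\xi_B$ follows from that of $p^b_B$ with respect to Lebesgue measure. The factors $h_B$ cancel because $\overline{p}^{b,B}(s,y,z)\,\overline{p}^{b,B}(t,z,x)\,h_B(z)\,dz = \overline{p}^{b,B}(s,y,z)\,p^b_B(t,z,x)\,dz/h_B(x)$.
\item[(b)] \emph{Sub-Markov property.} Here $\wh P^{b,B}_t1(x)=h_B(x)^{-1}\int_B h_B(y)\,p^b_B(t,y,x)\,dy$. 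This is at most $1$ by the excessivity of $\xi_B$ in Proposition \ref{em}, applied in its density form via the joint continuity of $p^b_B$.
\item[(c)] \emph{Duality.} The identity $\int_B f\,P^{b,B}_tg\,d\xi_B=\int_B g\,\wh P^{b,B}_tf\,d\xi_B$ is Fubini applied to $f(x)g(y)\,p^b_B(t,x,y)\,h_B(x)\,dx\,dy$, first for nonnegative functions and then for $L^2(B,\xi_B)$. Here $h_B$ is bounded and strictly positive by Proposition \ref{em}.
\item[(d)] \emph{Potential kernel.} Integrating in $t$ gives $\int_0^\infty \wh p^{b,B}(t,x,y)\,dt=\overline G{}^b_B(y,x)$. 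So the potential kernel of $\wh P^{b,B}_t$ is the transposed Green function.
\end{enumerate}

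The substantive step is upgrading $\{\wh P^{b,B}_t\}$ to a Hunt process. I will invoke \cite{L}, which applies to a Hunt process with a reference excessive measure $\xi_B$ and a Green function satisfying (H1)--(H5). Here $X^{b,B}$ is Hunt and strong Feller by Proposition \ref{pHC}, and (H1)--(H5) hold by Propositions \ref{P:G} and \ref{em}. The conclusion of \cite{L} is a Hunt process $\wh X^{b,B}$ whose semigroup has potential kernel $\overline G{}^b_B(y,x)$. Each hypothesis plays a definite role:
\begin{enumerate}
\item[(i)] (H2) gives the extended continuity of $\overline G{}^b_B(\cdot,x)$, which is needed for $y\mapsto \overline G{}^b_B(y,x)$ to be coexcessive and regular.
\item[(ii)] (H4) gives that the potentials $\overline G{}^b_B(\cdot,y)$ are pure, which is what identifies the dual and yields quasi-left continuity.
\item[(iii)] (H3) and (H5) give local integrability on both sides.
\end{enumerate}
The resolvent uniqueness argument then identifies the transition density of $\wh X^{b,B}$ with $\wh p^{b,B}$, using joint continuity of $p^b_B$ to pass from resolvents to semigroups.

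Finally, transience of $\wh X^{b,B}$ will follow from (H1) and (H5). Taking any strictly positive bounded $g$ supported on compacts exhausting $B$, the potential $\int_B \overline G{}^b_B(y,x)g(y)\,\xi_B(dy)$ is finite by (H5) and positive by (H1). The main obstacle I expect is verifying that the hypotheses of \cite{L} are met exactly as stated. That will be done by following \cite[Theorem 5.5]{CKS} line by line, with the Green function estimates of Proposition \ref{P:G} replacing the corresponding estimates there. A secondary concern is the passage from the measure-level excessivity in Proposition \ref{em} to the pointwise sub-Markov bound in (b), which will use continuity of $p^b_B$.
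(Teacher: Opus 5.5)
Your proposal is correct and takes the same route as the paper, which proves this theorem in one line: it invokes Liao's duality theorem \cite{L} using (H1)--(H5) and Proposition \ref{P:G}, exactly as in \cite[Theorem 5.5]{CKS}, and your checks (a)--(d) simply make explicit the kernel-level facts implicit in that citation. Two small slips: in (a) the right-hand side should read $p^b_B(s,y,z)\,p^b_B(t,z,x)\,dz/h_B(x)$, and for transience you can just take $g\equiv 1$, since $\int_B \overline G{}^b_B(y,x)\,\xi_B(dy)=h_B(x)^{-1}\int_B G^b_B(y,x)\,h_B(y)\,dy\le \|h_B\|_\infty$.
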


Next we determine
a L\'evy system for $\wh X^{b, B}$. We first consider a L\'evy
system for $X^{b, B}$ with respect to the reference measure
$\xi_B(dx)$. Let $\overline J^B(x, y):=J(x, y)/h_B(y)$ for $x, y\in B.$ One can easily check that, if
$$\begin{aligned}
J^B(x,dy)&:=\overline J^B(x, y) \xi_B(dy)=J(x, y)\,dy \qquad\mbox{for
} (x,y) \in B \times B, \\
J^B(x,\partial)&:=\int_{B^c}J(x,y)\,dy \qquad\mbox{for }
x\in B,
\end{aligned}$$
and $H^B_t:=t$,\vspace*{1pt} then $(J^B, H^B)$ is
a L\'evy system for $X^{b, B}.$
It follows from \cite{G} that a L\'evy system $(\wh J^B, \wh H^B)$
for $\wh X^{b,B}$ satisfies $\wh H^B_t=t$ and
\[
\wh J^B(y,dx) \xi_B(dy) =J^B(x,dy)\xi_B (dx).
\]
 Thus we can take $\wh J^B(x, dy)=\overline J^B(y, x)\,\xi_B(dy)=\frac{J(y, x)h_B(y)}{h_B(x)} \,dy$.

\begin{lem}\label{L:4.5}
For each $R>0,$
 there is a constant  $C=C(d, R)>0$  such that for any $r\in (0, R)$ and $t>0,$
\begin{equation}\label{e:4.20'}
\P_x(\tau^b_{B(x,r)}\leq t)\leq  C   t/r^2  \quad \mbox{for every} \quad x\in \R^d.
\end{equation}
Consequently,  ${\bf (EP)} _{\leq, R, \gamma}$   holds  with  $\gamma=2.$
\end{lem}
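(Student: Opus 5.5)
The plan is to derive \eqref{e:4.20'} from the upper heat kernel bound \eqref{e:p}, via a standard maximal-type argument for exit times. Fix $R>0$, $r\in(0,R)$ and $x\in\R^d$. First, if $t\geq r^2$ there is nothing to prove once $C\geq 1$. So assume $t<r^2<R^2$, and hence all times are bounded by $R^2$. This is what keeps the factor $e^{c_2t}$ in \eqref{e:p} under control, uniformly in $x$.

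The first step is a one-point tail estimate. For $s\le R^2$ and $\rho>0$, integrate \eqref{e:p} over $\{|y-x|\geq\rho\}$:
\begin{equation*}
\P_x(|X^b_s-x|\geq \rho)\leq c_1e^{c_2R^2}\Big(\int_{|z|\geq\rho}s^{-d/2}e^{-c_3|z|^2/s}\,dz+\int_{|z|\ge\rho}s|z|^{-d-\beta}\,dz\Big).
\end{equation*}
The Gaussian term is at most $c\,e^{-c\rho^2/s}\le c\,s/\rho^2$. The jump term equals $c\,s\rho^{-\beta}$. Since $\beta<2$ and $\rho\le R$, we have $\rho^{-\beta}\le R^{2-\beta}\rho^{-2}$. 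Altogether,
\begin{equation*}
\sup_{y\in\R^d}\P_y(|X^b_s-y|\geq\rho)\leq c(d,R)\,s/\rho^2 \qquad\text{for } s\le R^2,\ \rho\le R.
\end{equation*}

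The second step passes from fixed times to the exit time, using the strong Markov property at $\tau:=\tau^b_{B(x,r)}$. On $\{\tau\le t\}$ the path sits at $X^b_\tau$ with $|X^b_\tau-x|\geq r$. Then either $|X^b_t-x|\geq r/2$, or else $|X^b_t-X^b_\tau|\geq r/2$. Applying the strong Markov property with the uniform estimate from the first step (with $s=t-\tau\le t$ and $\rho=r/2$) gives
\begin{equation*}
\P_x(\tau\le t)\le \P_x(|X^b_t-x|\ge r/2)+\sup_{s\le t,\,y}\P_y(|X^b_s-y|\ge r/2)\le c\,t/r^2 .
\end{equation*}
The second term needs care, since the displacement estimate has to hold for every $s\le t$. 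It does, because the bound from the first step is monotone in $s$.

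The conclusion $\P_x(\tau^b_{B(x,r)}\le t)\le Ct/r^2$ holds with $C=C(d,R)$, uniformly in $x$, and this is ${\bf (EP)}_{\le,R,2}$. The only delicate point is the uniformity in $x$ and the control of $e^{c_2t}$. Both are handled by the restriction $t<r^2<R^2$ and by the fact that \eqref{e:p} holds with constants independent of $x,y$. One could alternatively argue via Dynkin's formula with a cutoff function applied to $\L^b$; this would require bounding $b\cdot\nabla f$ with $b$ only in the Kato class, which is why the heat kernel route is preferable.
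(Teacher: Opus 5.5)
Your proof is correct and follows essentially the paper's argument. You reduce to $t<r^2$, apply the strong Markov property at $\tau^b_{B(x,r)}$ after splitting according to the position at a fixed later time, and bound the uniform displacement probability $\sup_{s,z}\P_z(|X^b_s-z|\ge r/2)$ by integrating \eqref{e:p}: the Gaussian part gives $c\,s/r^2$ and the jump part gives $c\,s\,r^{-\beta}\le cR^{2-\beta}s/r^2$. The only difference is that the paper compares positions at time $2t$ rather than $t$, which is immaterial.
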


\pf Let $R>0$ and $r\in (0,R).$ Note that $\frac{t}{r^2}>1$ for $t\in [r^2, \infty)$.
It suffices to prove \eqref{e:4.20'} for $t\in (0, r^2).$
By the strong Markov property of $X^b,$ for $x\in \R^d,$
\begin{eqnarray}\label{e:4.32}
\P_x(\tau^b_{B(x,r)}\leq t)
&=&\P_x(\tau^b_{B(x,r)}\leq t, X^b_{2t}\in B(x, r/2)^c)+\P_x(\tau^b_{B(x,r)}\leq t, X^b_{2t}\in B(x, r/2)) \nonumber \\
&\leq& \P_x(X^b_{2t}\in B(x, r/2)^c)+\sup_{s\leq t, z\in \R^d}\P_z(X^b_{2t-s}\in B(z, r/2)^c) \nonumber \\
&\leq & 2\sup_{s\leq 2t, z\in \R^d}\P_z(X^b_{s}\in B(z, r/2)^c).
\end{eqnarray}
By  \eqref{e:p},  there exist positive constants $c_1=c_1(d, R)$ and $c_2=c_2(d)$ such that for $t\in (0, r^2)$,
\begin{eqnarray} \label{e:4.34n}
&& \sup_{s\leq 2t, x\in\R^d}\P_x(X^b_s\in B(x, r/2)^c)
=\sup_{s\leq 2t, x\in\R^d}\int_{B(x, r/2)^c}  p^b(s, x, y)\,dy
\nonumber \\
&\leq & c_1\sup_{s\leq 2t, x\in\R^d}\int_{B(x, r/2)^c} \left(s^{-d/2}e^{-\frac{c_2|x-y|^2}{s}}+\dfrac{s}{|x-y|^{d+\beta}}\right)\,dy.
\end{eqnarray}
By using polar coordinate, for  every $x\in\R^d$ and $s\in (0, 2t)\subset (0, 2r^2),$
\begin{eqnarray} \label{e:4.35n}
&& \int_{B(x, r/2)^c} s^{-d/2}e^{-\frac{c_2|x-y|^2}{s}}\,dy
=\int_{r/(2\sqrt s)}^\infty e^{-c_2v^2}\,dv
= \int_{r^2/(4 s)}^\infty e^{-c_2u}\dfrac{1}{2\sqrt u}\,du \nonumber \\
&\leq & \dfrac{\sqrt s}{r}\int_{r^2/(4 s)}^\infty e^{-c_2u}\,du
= \dfrac{\sqrt s}{c_2r}e^{-c_2r^2/(4s)}\leq 2\dfrac{c_3}{c_2}\dfrac{s}{r^2} ,
\end{eqnarray}
where we used $e^{-c_2x}< c_3x^{-1/2}$  for $x>1/8$ in the last inequality.
Hence, by this inequality and \eqref{e:4.34n}, there exists $c_4=c_4(d, R)>0$ such that for $r\in (0, R)$ and $t\in (0, r^2),$
\begin{equation}\label{e:4.33}
\sup_{s\leq 2t, x\in \R^d}\P_x(X^b_s\in B(x, r/2)^c)\leq c_4 t/r^2,
\end{equation}
Hence by \eqref{e:4.32} and \eqref{e:4.33}, \eqref{e:4.20'} is obtained.
\qed

\smallskip

For each ball $B$ and each open subset $U\subset B,$ denote by $\wh\tau^{b, B}_U$ the first exit time of the dual process $\wh X^{b, B}$ from $U.$

\begin{lem}\label{L:4.6}
For each $R>0,$
 there is a constant  $C=C(d, R)>0$  such that for
  each ball $B$ with radius $R$,
  any $r\in (0, R/4)$ and $t>0,$
\begin{equation}\label{e:4.20}
\P_x\big(\wh\tau^{b, B}_{B(x,r)}< t\wedge\wh\tau^{b, B}_{2^{-1}B}\big)\leq  C  t/r^2
\quad \hbox{for }  x\in \tfrac{1}{2} B.
\end{equation}
Consequently, ${\bf (\wh{EP})} _{\leq, R, \gamma}$   holds   with  $\gamma=2.$
\end{lem}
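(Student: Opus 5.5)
The plan is to mimic the proof of Lemma \ref{L:4.5}, now for the dual process $\wh X^{b,B}$, whose transition density with respect to Lebesgue measure is explicit. By Theorem \ref{dualpr}, $\wh P^{b,B}_t$ has density $\overline p^{b,B}(t,y,x)$ with respect to $\xi_B(dy)=h_B(y)\,dy$. Hence its density with respect to $dy$ is
\[
\wh q(t,x,y):=\frac{p^b_B(t,y,x)\,h_B(y)}{h_B(x)}, \qquad x,y\in B.
\]
For $x\in \tfrac12 B$ we have $h_B(x)\geq C^{-1}$ by \eqref{e:h}. I also expect $h_B$ to be bounded on all of $B$, since $h_B(y)\asymp \E_y\tau^W_B\leq c R^2$ by the argument of Proposition \ref{em}. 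Together with $p^b_B\leq p^b$ and \eqref{e:p}, this gives for $x,y\in\tfrac12 B$ and $s\le 2R^2$
\[
\wh q(s,x,y)\leq c\Big(s^{-d/2}e^{-c_3|x-y|^2/s}+s|x-y|^{-(d+\beta)}\Big),
\]
with $c=c(d,R)$ independent of the center of $B$. Here $e^{c_2 s}$ is bounded because $s\leq 2R^2$.

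Next I would run the argument of \eqref{e:4.32}, but stopped at $\wh\tau^{b,B}_{2^{-1}B}$ so that the process stays in the region where the density bound holds. It suffices to treat $t<r^2$. On the event $\{\wh\tau^{b,B}_{B(x,r)}<t\wedge \wh\tau^{b,B}_{2^{-1}B}\}$, let $S=\wh\tau^{b,B}_{B(x,r)}$, so that $z=\wh X^{b,B}_S\in \tfrac12 B\setminus B(x,r)$. Then either $\wh X^{b,B}_{2t}\notin B(x,r/2)$, or the process, started afresh at $z$, ends up at time $2t-S$ in $B(x,r/2)\subset B(z,r/2)^c$. Here "$\wh X^{b,B}_{2t}\notin B(x,r/2)$" is understood to include the case that the process has been killed or has left $\tfrac12 B$.

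The main obstacle is that the density bound is only available inside $\tfrac12 B$: killing and exiting $\tfrac12 B$ before time $2t$ must be handled separately. To handle this I would split each probability $\P_z(\wh X^{b,B}_u\notin B(z,r/2))$, for $z\in\tfrac12 B$, into two parts. The first is the part where the process is alive in $B\setminus B(z,r/2)$; it is bounded by integrating $\wh q$ over $B(z,r/2)^c$. To cover $y$ outside $\tfrac12 B$, I use the same bound, which holds there because only $h_B(x)$ with $x\in \tfrac12 B$ appears in the denominator. The polar-coordinate computation \eqref{e:4.35n} then gives $c\,u/r^2$. The second part is the killing, i.e.\ exit from $B$ by time $u\le 2t$. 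Since $z\in \tfrac12 B$, exiting $B$ requires leaving $B(z,R/2)$, and by Lemma \ref{L:4.5} this has probability at most $c\,t/R^2\le c\,t/r^2$.

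Summing the two cases as in \eqref{e:4.32} yields \eqref{e:4.20}. The statement that ${\bf (\wh{EP})}_{\leq,R,2}$ holds is then immediate from the definition.
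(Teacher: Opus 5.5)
Your reduction is sound up to the killing term, but that step fails as written. You treat the death of $\wh X^{b,B}$ as ``exit from $B$'' and bound it by Lemma \ref{L:4.5} at scale $R/2$. Lemma \ref{L:4.5} is an estimate for $X^b$, not for its dual. There is no pathwise link between $X^b$ and $\wh X^{b,B}$ through which exit-time bounds transfer; if there were, Lemma \ref{L:4.6} would be immediate. Invoking the dual estimate itself at scale $R/2$ would be circular, and is outside the range $r<R/4$ anyway.

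Moreover, the lifetime $\wh\zeta$ of $\wh X^{b,B}$ is not an exit time from $B$. The duality is relative to $\xi_B=h_B\,dx$ with $h_B$ a potential, so $\wh X^{b,B}$ is killed in the interior of $B$. For instance, when $\L^b=\Delta$ it is the Doob $h_B$-transform of the killed process, which dies inside $B$ at rate $1/h_B$.

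The killing term should instead be controlled through the semigroup. By Chapman--Kolmogorov and $h_B(z)=\int_0^\infty\int_B p^b_B(s,w,z)\,dw\,ds$,
\[
\P_z(\wh\zeta\le u)=1-\wh P^{b,B}_u1(z)=\frac{1}{h_B(z)}\int_0^u\int_B p^b_B(s,w,z)\,dw\,ds .
\]
For $z\in\tfrac12B$ and $u\le 2R^2$, combine \eqref{e:h} with \eqref{e:p}, splitting the $dw$-integral at $|w-z|=\sqrt s$. This gives $\P_z(\wh\zeta\le u)\le c\,u\le cR^2u/r^2$. Note that killing only enters your first term $\P_x(\wh X^{b,B}_{2t}\notin B(x,r/2))$. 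The strong Markov term at $S$ requires the particle to be alive in $B(x,r/2)$ at time $2t$, so no killing appears there.

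With that repair, the rest of your argument gives a complete proof by a route different from the paper's. That rest consists of the Lebesgue density $p^b_B(t,y,x)h_B(y)/h_B(x)$, the bound $h_B\le cR^2$ on all of $B$ via $h_B\asymp\E_{\cdot}\tau^W_B$, and the forward-in-time decomposition at $S$.

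The paper never estimates the lifetime. It bounds $\P_x(\wh\tau^{b,B}_{B(x,r)}<s\wedge\wh\tau,\ s/2<\wh\tau)$ with $\wh\tau=\wh\tau^{b,B}_{2^{-1}B}$. On that event the particle is alive in $\tfrac12B$ at time $s/2$, so only probabilities $\P_y(\wh X^{b,B}_a\in\tfrac12B\setminus B(y,r/2))$ with $y\in\tfrac12B$ appear. It then removes the constraint $s/2<\wh\tau$ by summing over the dyadic ranges $\wh\tau\in(2^{-n}t,2^{1-n}t]$.
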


\pf Note that $\frac{t}{r^2}>1$ for $t\in [r^2, \infty).$
It suffices to prove \eqref{e:4.20} for $t\in (0, r^2).$
For the simplicity of notation, let $\wh\tau:=\wh\tau^{b, B}_{2^{-1}B}.$  The following proof is similar to \cite[Lemma 2.4]{CS}.

 Let $x\in \tfrac{1}{2}B$ and $r\in (0, R/4)$. For any $s\in (0, r^2)$, by the strong Markov property of $\wh X^{b, B}$, we have
\begin{align}\label{e:EP-1}
		&\P_x\big( \wh\tau^{b, B}_{B(x,r)} < s \wedge \wh\tau, \, s/2 < \wh\tau  \big) \nonumber\\
		&\leq 	\P_x\big( \wh X^{b, B}_{s/2} \in \tfrac{1}{2}B \setminus B(x,r/2)  \big)  + 	\P_x\big( \wh X^{b, B}_{s/2} \in  \tfrac{1}{2}B\cap B(x,r/2) , \, \wh X^{b, B}_{\wh\tau^{b, B}_{B(x,r)}} \in \tfrac{1}{2}B \setminus B(x,r); \wh\tau^{b, B}_{B(x,r)}<s  \big)  \nonumber\\
&= 	\P_x\big( \wh X^{b, B}_{s/2} \in \tfrac{1}{2}B \setminus B(x,r/2)  \big)  + 	\P_x\big( \wh X^{b, B}_{s/2} \in  \tfrac{1}{2}B\cap B(x,r/2) , \, \wh X^{b, B}_{\wh\tau^{b, B}_{B(x,r)}} \in \tfrac{1}{2}B \setminus B(x,r); \wh\tau^{b, B}_{B(x,r)}<s/2  \big)  \nonumber\\
&\quad +	\P_x\big( \wh X^{b, B}_{s/2} \in  \tfrac{1}{2}B\cap B(x,r/2) , \, \wh X^{b, B}_{\wh\tau^{b, B}_{B(x,r)}} \in \tfrac{1}{2}B \setminus B(x,r); s/2<\wh\tau^{b, B}_{B(x,r)}<s  \big)  \nonumber\\
			&\leq 	\P_x\big( \wh X^{b, B}_{s/2} \in \tfrac{1}{2}B \setminus B(x,r/2)  \big)+ 	\P_x\big(   \wh X^{b, B}_{\wh\tau^{b, B}_{B(x,r)}} \in \tfrac{1}{2}B, \, \wh X^{b, B}_{s/2} \in  \tfrac{1}{2}B \setminus B( \wh X^{b, B}_{\wh\tau^{b, B}_{B(x,r)}},r/2) ; \wh\tau^{b, B}_{B(x,r)}<s/2 \big)     \nonumber\\
			&\quad  + 	\P_x\big(  \wh X^{b, B}_{s/2} \in \tfrac{1}{2}B\cap B(x, r/2), \, \wh X^{b, B}_{\wh\tau^{b, B}_{B(x,r)}} \in \tfrac{1}{2}B \setminus B(\wh X^{b, B}_{s/2},r/2) ; s/2<\wh\tau^{b, B}_{B(x,r)}<s  \big)  \nonumber\\
		&\leq 	3 \sup_{y \in \tfrac{1}{2}B, \, a \in (0,s/2]} \P_y\big(  \wh X^{b, B}_{a} \in  \tfrac{1}{2}B \setminus B(y,r/2)  \big).
\end{align}
By Theorem \ref{dualpr}, \eqref{e:h} and \eqref{e:p}, we have
\begin{align*}
&	\sup_{x\in \tfrac{1}{2}B, a\in (0, s/2)}\P_x\big( \wh X^{b, B}_a \in \tfrac{1}{2}B \setminus B(x,r/2)  \big) \\
& = \sup_{x\in \tfrac{1}{2}B, a\in (0, s/2)}\int_{\tfrac{1}{2}B \setminus B(x, r/2)} \wh p^{b, B}(a, x, y) \,\xi_B(dy) \\
& = \sup_{x\in \tfrac{1}{2}B, a\in (0, s/2)}\int_{\tfrac{1}{2}B \setminus B(x, r/2)} p^{b}_B(a,y,x) h_B(y)/h_B(x) \,dy \\
&\leq c_1\sup_{x\in\R^d, a\in (0, s/2)}\int_{\tfrac{1}{2}B \setminus B(x, r/2)}  p^b_B(a, y, x) \,dy\\
&\leq c_1\sup_{x\in\R^d, a\in (0, s/2)}\int_{\R^d \setminus B(x,r/2)}  p^b(a, y, x) \,dy\\
&\leq c_2\sup_{x\in\R^d, a\in (0, s/2)}\int_{B(x, r/2)^c} \left(a^{-d/2}e^{-\frac{c_3|x-y|^2}{2a}}+\dfrac{a}{|x-y|^{d+\beta}}\right)\,dy\\
&  \leq c_4s/r^2,
\end{align*}
where $c_k=c_k(d, R), k=1, \cdots, 4$ are independent of the center of $B$ and the last inequality is due to \eqref{e:4.35n} and $r\in (0, R/4).$
Combining this with \eqref{e:EP-1}, we obtain
\begin{align*}
\P_x\big( \wh\tau^{b, B}_{B(x,r)} < s \wedge \wh\tau, \, s/2 < \wh\tau  \big)  \le \frac{3c_4 s}{r^2} \quad \text{for any $s\in (0,r^2)$}.
\end{align*}
Using this, we conclude that for any $x\in\R^d$ and $t\in (0, r^2)$,
\begin{align*}
		\P_x\big( \wh\tau^{b, B}_{B(x,r)} < t \wedge \wh\tau  \big) &= 	\P_x\big( \wh\tau^{b, B}_{B(x,r)} < t < \wh\tau  \big)+  \sum_{n=1}^\infty 	\P_x\big( \wh\tau^{b, B}_{B(x,r)} <  \wh\tau, \, \wh\tau \in (2^{-n} t, 2^{1-n}t]\big)\\
		&\le 	\P_x\big( \wh\tau^{b, B}_{B(x,r)} < t \wedge \wh\tau, \, t/2<\wh\tau  \big)+  \sum_{n=2}^\infty 	\P_x\big( \wh\tau^{b, B}_{B(x,r)} <  2^{1-n}t \wedge \wh\tau, \, 2^{-n}t< \wh\tau \big)\\
		 &\le  \frac{3c_4t}{r^2}  \bigg( 1+  \sum_{n=2}^\infty 2^{1-n} \bigg) = \frac{9c_4t}{r^2}.
\end{align*}
\qed

\begin{proof}[Proof of   Proposition \ref{C6}.]
 It follows from \cite{CHXZ} that for each finite $t>0,$ the transition density of  $X^b$ is bounded from above by the transition density function of $\Delta+\Delta^{\beta/2}.$ As Hunt's hypothesis holds for $\Delta+\Delta^{\beta/2}$ (see e.g. \cite[Theorem 2.1]{HS1}), then by \cite[Theorem 1.1]{HS2}, it holds for $X^b.$ By  Theorem \ref{dualpr},
  for each ball $B$, the process $X^B$ is   in weak  duality to another
   Hunt process $\wh X^B$ with respect to  the reference measure $\xi_B$. Moreover, the $\alpha$-resolvent measures of $X^B$ and $\wh X^B$ are absolutely continuous with respect to $\xi_B.$ Hence assumption (A1)  holds.

 Next we prove assumption (A2') holds.
  By the condition \eqref{e:j}, for each $R>0$ and $r\in (0, R),$
$$J(x, B(x, r)^c)=\int_{B(x, r)^c} j(x, y-x)\,dy\leq \int_{B(x, r)^c} c_0|y-x|^{-(d+\beta)}\,dy\leq c_1r^{-\beta}\leq c_1 R^{2-\beta}r^{-2}.$$
Then ${\rm \bf (Jt)}_{\leq, \bar r, \gamma}$ holds with $\bar r=R$ and $\gamma=2.$
Let $B$ be a ball with radius $R>0$ in $\R^d.$
By \eqref{e:h} of Proposition \ref{em} and the condition \eqref{e:j}, there exist positive constants $c_k=c_k(d, R), k=2, 3$ such that for $x\in \tfrac{1}{2}B$ and $r\in (0, R/4),$
$$\begin{aligned}
&\wh J^B(x, (\tfrac{1}{2}B)\setminus B(x,r))=\int_{(\tfrac{1}{2}B)\setminus B(x,r)}\dfrac{J(y, x)h_B(y)}{h_B(x)}\,dy
\leq c_2\int_{(\tfrac{1}{2}B)\setminus B(x,r)}J(y, x)\,dy\\
\leq &\int_{(\tfrac{1}{2}B)\setminus B(x,r)}c_0c_2|x-y|^{-(d+\beta)}\,dy\leq \frac{c_3}{r^\beta}\leq c_3 R^{2-\beta} r^{-2}.\\
\end{aligned}$$
That is, ${\rm \bf (\wh{Jt})}_{\leq, \bar r, \gamma}$ holds  with $\bar r=R$ and $\gamma=2.$
 By Lemmas \ref{L:4.5} and \ref{L:4.6}, the conditions ${\bf (EP)} _{\leq, R, \gamma}$  and ${\bf (\wh{EP})} _{\leq, R, \gamma}$ hold with  $\gamma=2.$
 Hence by Proposition \ref{P1}, assumption (A2') holds for $X^b.$

 Let $B=B(x_0, R)$ be a ball centered at $x_0$ with radius $R>0.$
Note that $\overline{G}{}^b_B(x,y)$and $\overline J^B(x, y)$ are the Green function and the jump density of
of $X^{b, B}$ with respect to $\xi_B$ in $B$.
 By \eqref{e:h}, there exists $c_4=c_4(d, R)>0$ such that $\overline G^{b}_B(x, y)=G^b_B(x, y)/h_B(y)\leq c_4G^b_B(x, y)$ for  any $x, y\in \tfrac{1}{2}B\times \tfrac{1}{2}B\setminus {\rm diag}.$
Then by this together with Proposition \ref{P:G},  it is easy to check that assumption (A3') holds.
Note that  by \eqref{e:h} and \eqref{e:j}, for each  $R>0,$
\begin{equation}\label{e:4.41}\begin{aligned}
&\sup_{y\in B(x_0, R/2)\setminus B(x_0, R/16)} \overline J^B(x_0, y)=\sup_{y\in B(x_0, R/2)\setminus B(x_0, R/16)} J(x_0, y)/h_B(y)\\
&\leq c_5\sup_{y\in B(x_0, R/2)\setminus B(x_0, R/16)} J(x_0, y)\leq c_6\sup_{R/16\leq |z|< R/2} |z|^{-(d+\beta)}\leq c_6(R/16)^{-(d+\beta)}<\infty,
\end{aligned}\end{equation}
where $c_k, k=5, 6$ are independent of $x_0\in\R^d.$
 Combining this with the conditions \eqref{e:4.27} and \eqref{e:4J} of Proposition \ref{C6},  assumption (A4) holds.
Hence by the argument above, assumptions (A1), (A2')-(A3') and (A4) hold. Thus by Theorem \ref{T0}, the BHP \eqref{e:Xb} holds.

If we further assume \eqref{e:4.27} and \eqref{e:4J} hold uniformly for $x_0\in\R^d,$ note that \eqref{e:4.41} holds uniformly for $x_0\in\R^d,$ thus assumption (A4') holds.
As mentioned before,   for each ball $B=B(x_0, R)$ centered at $x_0$ with radius $R$, one has $G^b_B(x, y)\asymp G^\Delta_B(x, y)$, where $G^\Delta_B$  is the Green function of the Laplace operator on $B$ and the multiplicative constants are independent of the center $x_0$ of the ball $B(x_0, R).$
Hence for each $R>0,$
$$\begin{aligned}
&\sup_{x_0\in\R^d}\E_{x_0} \tau^b_{B(x_0, R)}=\sup_{x_0\in\R^d} \int_{B(x_0, R)} G^b_{B(x_0, R)}(x_0, y)\,dy\\
&\asymp \sup_{x_0\in\R^d} \int_{B(x_0, R)} G^\Delta_{B(x_0, R)}(x_0, y)\,dy
=\sup_{x_0\in\R^d}\E_{x_0} \tau^W_{B(x_0, R)} \leq c_7R^2,
\end{aligned}$$
 where   $\tau^W_{B(x_0, R)}$ is the first exit time of Brownian motion from $B(x_0, R)$.
So assumption (A5) holds. Hence assumptions (A1), (A2')-(A4') and (A5) hold. Then by Theorem \ref{T0},  BHP \eqref{e:Xb} holds  with $C=C(R)\geq 1$.
\end{proof}

\subsection{L\'evy processes with degenerate Gaussian component}

Let $d>m\geq 3$ be integers and $\alpha\in (0, 2).$ Let $X$ be  a  L\'evy process in $\R^d$ given by
$$X_t=(W_t, 0_{d-m})+Z_t,$$
where $0_{d-m}$ is the zero vector in $\R^{d-m},$ $W$ is a Brownian motion in $\R^m$ and $Z$ is a rotationally symmetric $\alpha$-stable process in $\R^d$   that  is independent of $W.$
It is known that the generator of $X$ is
$$\L f(x)=\sum_{i=1}^m \partial_{x_i}^2 f(x)+\lim_{\ee\rightarrow 0} \int_{\{z\in \R^d: |z|>\ee\}} (f(x+z)-f(x))\dfrac{\mathcal A(d, \alpha)}{|z|^{d+\alpha}}\,dz, \quad f\in C_c^\infty(\R^d),$$
 where  $\mathcal{A}(d, \alpha)$ is a positive normalizing constant so that the Fourier transform of $\widehat{\Delta^{\alpha/2}f}$ of $\Delta^{\alpha/2}f$ is $-|\xi|^\alpha \hat f(\xi).$

 \begin{prp}\label{C7}
 For each  $R>0,$
there exists a constant  $C=C(d, \alpha, m, R)>0$   such that for any open subset $D$ in $\R^d,$ $z_0\in \partial D$ and  for   any  nonnegative  regular harmonic functions $f$ and $g$
on $D\cap B(z_0, R)$   vanishing  on  $(D^c)^r\cap B(z_0, R),$
$$
f(x)g(y)\leq Cf(y)g(x)  \quad \hbox{for } x, y\in D\cap B(z_0, R/32).
$$
 \end{prp}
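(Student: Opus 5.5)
The plan is to deduce the statement from Theorem \ref{T0}, verifying (A1), (A2')--(A4') and (A5) for $X$ with $R_0=\infty$ (any finite $R_0$ suffices, since $R$ is arbitrary). Since $X$ is a symmetric L\'evy process, it is Lebesgue-symmetric. Hence $X^B$ is self-dual with respect to $m_B=$ Lebesgue measure, and every semipolar set is polar by \cite{Si}, as in the proof of Proposition \ref{C2}. The resolvents are absolutely continuous because $X_t$ has a density: the stable part $Z_t$ alone has a smooth density, and convolution with the degenerate Gaussian preserves this. So (A1) holds.

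The jump kernel is $J(x,y)=\mathcal A(d,\alpha)|x-y|^{-d-\alpha}$, which is translation invariant and positive. Conditions \eqref{e:J1}--\eqref{e:J2} then follow from the elementary comparability $|x-y|\asymp|x_0-y|$ for $x\in pB$ and $y\notin qB$, and \eqref{e:J3} is immediate, all uniformly in $x_0$. This gives (A4'). Also $J(x,B(x,r)^c)=c r^{-\alpha}\le c R^{2-\alpha}r^{-2}$, so ${\bf (Jt)}_{\le R,2}$ holds. By symmetry, Remark \ref{R:1.11} yields $\wh{\bf (Jt)}$ and $\wh{\bf (EP)}$ from their primal versions.

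For ${\bf (EP)}_{\le R,2}$, the following steps are planned:
\begin{itemize}
\item Use $\P_x(\tau_{B(x,r)}<t)\le \P(\sup_{s\le t}|W_s|\ge r/2)+\P(\sup_{s\le t}|Z_s|\ge r/2)$.
\item Bound the Brownian term by $ct/r^2$ through the Gaussian tail, exactly as in \eqref{e:4.35n}.
\item Bound the stable term by $ct/r^\alpha\le cR^{2-\alpha}t/r^2$, using the standard maximal inequality for L\'evy processes or the reflection estimate \eqref{e:4.32}.
\end{itemize}
Proposition \ref{P1} then gives (A2'). For (A5), translation invariance gives $\E_x\tau_{B(x,R)}=\E_0\tau_{B(0,R)}\le\E_0\tau^{W}_{B_m(0,R)}$-type bounds. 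More simply, I would use Lemma \ref{L:2.1n}: its constant comes from the jump measure of the annulus, which is translation invariant, so it is uniform in the center.

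The main obstacle is (A3'), the boundedness of $G^B(x,y)$ on $B(x_0,R/16)\times A(x_0,R/8,R/2)$. This is harder here because the Gaussian part is degenerate, so no elliptic Harnack inequality or two-sided heat kernel is directly available. I would bound $G^B(x,y)\le\int_0^\infty p_B(t,x,y)\,dt$ and split the time integral at $T$.

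For small $t$, I would use a near-diagonal-free bound on the free density $p(t,x,y)$. Since $X_t=(W_t,0)+Z_t$, $p(t,\cdot)$ is the convolution of the stable density $q_t$ with the degenerate Gaussian. With $|x-y|\ge R/16$, split according to whether $|Z_t|$ or $|W_t|$ carries at least half the displacement. If $Z$ does, use $q_t(z)\le ct|z|^{-d-\alpha}$ for $|z|\ge R/32$. If $W$ does, use the Gaussian factor $e^{-cR^2/t}$ times $\sup q_t\le ct^{-d/\alpha}$. Together these give $p(t,x,y)\le c(t+t^{-d/\alpha}e^{-cR^2/t})$, which is bounded for $t\le T$.

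For large $t$, I would use $p_B(t,x,y)\le \|p_B(t/2,\cdot,\cdot)\|$-type semigroup decay. Concretely, $\sup_z\P_z(\tau_B>t)\le e^{-ct}$, since $\sup_z\E_z\tau_B<\infty$ by Lemma \ref{L:2.1n}, together with $p(s,\cdot)\le cs^{-d/\alpha}$. Via Chapman--Kolmogorov this gives $p_B(t,x,y)\le c e^{-ct}$ for $t\ge T$, and the integral converges. All constants depend only on $d,\alpha,m,R$ by translation invariance, so (A3') holds. Theorem \ref{T0} then gives the claim with $C=C(d,\alpha,m,R)$; since the constant depends only on $R$, the requirement $R<R_0/16$ is vacuous.
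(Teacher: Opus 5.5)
Your proposal is correct, and it reaches Theorem \ref{T0} by a partly different route from the paper.

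For (A2') the paper does not use Proposition \ref{P1}. Since $\sL$ is explicit, it applies It\^o's formula to smooth bump functions $\varphi_{x_0,R},\psi_{x_0,R}$ with $\sup|\sL\varphi_{x_0,R}|\le c(R^{-2}+R^{-\alpha})$. This gives \eqref{e:ED1} directly, and \eqref{e:ED2} via symmetry and Remark \ref{R:1.1}. Your route through ${\bf (EP)}$, ${\bf (Jt)}$ and the box argument of Proposition \ref{P:3.5} is heavier but valid.

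For (A3') the paper bounds the whole-space Green function by integrating the estimate of Lemma \ref{L:4.19}, $p(t,x)\le C\,(p^W(4t,x^{(m)})\wedge p^Z(t,(0,x_{(d-m)})))+C\,p^Z(t,x)$, over all $t>0$. That step needs $G^W<\infty$, i.e.\ $m\ge 3$. The resulting bound \eqref{e:4.39} is then integrated again to get (A5).

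Your time splitting needs only two things:
(i) the off-diagonal bound $p(t,w)\le c\,(t+t^{-d/\alpha}e^{-cR^2/t})$ for $|w|\ge R/16$ on a bounded time interval;
(ii) exponential decay of the killed process for large $t$.
Your (A5) comes from Lemma \ref{L:2.1n} and translation invariance. So your argument never uses $m\ge 3$. You could even drop the killing and use $\sup_w p(t,w)\le c\,t^{-d/\alpha}$, which is integrable at infinity since $d>\alpha$.

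Two small repairs are needed.
\begin{enumerate}
\item Drop the suggested comparison $\E_0\tau_{B(0,R)}\le \E_0\tau^W_{B_m(0,R)}$. It has no pathwise justification, since $Z$ can carry $X$ back into the ball after $W$ has left $B_m(0,R)$.
\item The exponential decay needs $\sup_{z\in B}\E_z\tau_B<\infty$ over all of $B$, not only over $\frac12 B$ as Lemma \ref{L:2.1n} literally states. You get this from $\tau_B\le\tau_{B(z,2R)}$ and Lemma \ref{L:2.1n} applied to $B(z,2R)$, uniformly in $z$ by translation invariance.
\end{enumerate}
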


In the following, we give the proof of  Proposition \ref{C7}.

\begin{lem}\label{L:4.18}
For each $R>0,$ ${\bf (ED)}^s _{\leq, R}$ and  ${\bf (\wh{ED})}^s _{\leq, R}$   hold for $X$.
That is, assumption (A2') holds for $X$.
\end{lem}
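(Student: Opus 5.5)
The natural route is Proposition \ref{P1}. We verify, for each $R>0$ and some $\gamma>0$, the four conditions ${\bf (EP)}_{\leq,R,\gamma}$, ${\bf (Jt)}_{\leq,R,\gamma}$, ${\bf (\wh{EP})}_{\leq,R,\gamma}$ and ${\bf (\wh{Jt})}_{\leq,R,\gamma}$. Because $X$ is a L\'evy process whose Gaussian part has symmetric covariance and whose jump part is rotationally symmetric, $X$ is symmetric with respect to Lebesgue measure $m$. Hence for every ball $B$ the killed process $X^B$ is $m$-symmetric, and we may take $m_B=m|_B$ and $\wh X^B=X^B$. By Remark \ref{R:1.11}, the dual conditions then follow from the primal ones. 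It therefore suffices to check ${\bf (Jt)}$ and ${\bf (EP)}$ with a common exponent; we take $\gamma=2$.

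For ${\bf (Jt)}$, the L\'evy density is $\mathcal A(d,\alpha)|z|^{-d-\alpha}$, so $J(x,B(x,r)^c)=c\,r^{-\alpha}\le cR^{2-\alpha}r^{-2}$ for $r\in(0,R]$. For ${\bf (EP)}$, write $X=(W,0)+Z$ with $W$ and $Z$ independent. On the event $\{\tau_{B(x,r)}<t\}$, either $\sup_{s\le t}|W_s|\ge r/2$ or $\sup_{s\le t}|Z_s|\ge r/2$. Doob's maximal inequality (or reflection) gives $\P(\sup_{s\le t}|W_s|\ge r/2)\le c\,t/r^2$. For the stable part, the standard estimate $\P(\sup_{s\le t}|Z_s|\ge r/2)\le c\,t/r^{\alpha}$ holds; it can be obtained from the strong Markov property as in the proof of Lemma \ref{L:4.5}, together with $\P(|Z_s|\ge r)\le c\,s r^{-\alpha}$. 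For $r<R$ we have $r^{-\alpha}\le R^{2-\alpha}r^{-2}$. Translation invariance makes all constants uniform in $x$, which gives ${\bf (EP)}_{\leq,R,2}$.

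The only care needed is the symmetry/duality bookkeeping: ${\bf (\wh{EP})}$ involves the stopped event $\{\wh\tau^B_{B(x,r)}<t\wedge\wh\tau^B_{2^{-1}B}\}$, and ${\bf (\wh{Jt})}$ involves $\wh J^B$ restricted to $2^{-1}B$. With $\wh X^B=X^B$ and $\wh J^B(x,dy)=J(x,y)\,dy$ on $B$, the events in ${\bf (\wh{EP})}$ are contained in $\{\tau_{B(x,r)}<t\}$, because before leaving $B$ the process $X^B$ coincides with $X$. The quantity in ${\bf (\wh{Jt})}$ is bounded by $J(x,B(x,r)^c)$. Both bounds are therefore immediate. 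Proposition \ref{P1} then yields ${\bf (ED)}^s_{\leq,R}$ and ${\bf (\wh{ED})}^s_{\leq,R}$, that is, (A2').

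The degeneracy of the Gaussian part (rank $m<d$) plays no role in this step. It only matters later, for the Green function bound (A3'). I expect no serious obstacle here beyond confirming the symmetry of $X$. If a direct symmetry argument were not wanted, the fallback is to verify ${\bf (\wh{EP})}$ by hand using $\wh X^B\overset{d}{=}$ the killed process of $-X$, which has the same law as $X$.
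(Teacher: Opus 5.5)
Your proposal is correct, but it takes a different route from the paper.

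\textbf{The paper's argument.} The paper does not use Proposition \ref{P1} for this lemma. It proves the exit-distribution bounds directly with a cut-off/Dynkin (It\^o) argument. Take a smooth $\varphi$ with $\varphi=1$ on $\{1/32\le |y|\le 1/2\}$ and $\varphi=0$ on $\{|y|\le 1/64\}\cup\{|y|\ge 1\}$. Scaling gives $\sup_y|\mathcal L\varphi_{x_0,R}(y)|\le c(R^{-2}+R^{-\alpha})$. Then, with $\tau=\tau_{B_U(x_0,R/32)}$ and $x\in B_U(x_0,R/64)$,
$$
\P_x(X_\tau\in U)\le \E_x\varphi_{x_0,R}(X_\tau)\le c(R^{-2}+R^{-\alpha})\,\E_x\tau .
$$
The dual estimate is obtained the same way, using a cut-off $\psi$ equal to $1$ on $B(0,1/4)$ and supported in $B(0,3/8)$. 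Symmetry (Remark \ref{R:1.1}) then lets the paper replace $\wh X^B$ by $X$.

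\textbf{Comparison.} Both proofs use the symmetry of $X$ on the dual side. They differ in how the primal bound is produced.
\begin{itemize}
\item The paper's route is short, self-contained, and gives explicit constants. It relies on $C_c^\infty$ functions lying in the domain of the L\'evy generator. That is the Urysohn-type regularity the paper avoids in general but which is available for a L\'evy process.
\item Your route needs only exit-time and tail information: Doob's inequality for $W$, the standard $t/r^{\alpha}$ exit bound for $Z$, and $J(x,B(x,r)^c)\asymp r^{-\alpha}$, all converted to $\gamma=2$ via $r^{-\alpha}\le R^{2-\alpha}r^{-2}$. This is the same strategy the paper uses for Propositions \ref{C2} and \ref{C6}, so it would carry over where no generator computation is available.
\item The cost of your route is that it invokes the box-method machinery behind Proposition \ref{P1}, namely Proposition 3.1 of \cite{CC} and Proposition \ref{P:3.5}. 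For this particular example that is considerably heavier than the paper's argument.
\end{itemize}
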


\pf Let $\varphi\in C_c^\infty(\R^d)$ be such that $0\leq \varphi\leq 1, \varphi(y)=0$
if $|y|\leq  1/64$ or $|y|\geq 1$ and $\varphi(y)=1$ if $1/32\leq |y|\leq 1/2.$
For each $x_0\in\R^d$ and $r>0,$ define $\varphi_{x_0,r}(y):=\varphi((y-x_0)/r).$
Note that for $x\in\R^d,$
$$\begin{aligned}
&\lim_{\ee\rightarrow 0} \int_{z\in \R^d: |z|>\ee} (\varphi_{x_0,r}(x+z)-\varphi_{x_0,r}(x))\dfrac{\mathcal A(d, \alpha)}{|z|^{d+\alpha}}\,dz\\
=&\int_{\{z\in\R^d: |z|\leq  r\}} \left(\varphi_{x_0,r}(x+z)-\varphi_{x_0,r}(x)-\nabla \varphi_{x_0,r} (x) \cdot z \right) \dfrac{\mathcal A(d, \alpha)}{|z|^{d+\alpha}}\,dz\\
&+\int_{\{z\in\R^d: |z|> r\} } (\varphi_{x_0,r}(x+z)-\varphi_{x_0,r}(x)) \dfrac{\mathcal A(d, \alpha)}{|z|^{d+\alpha}}\,dz.
\end{aligned}$$
Since
$\frac{\partial^2}{\partial y_iy_j} \varphi_{x_0,r}(y)=r^{-2}\frac{\partial^2 \varphi }{\partial y_iy_j}  ((y-x_0)/r) $  and $\nabla   \varphi_{x_0,r} (y) = r^{-1} \nabla \varphi ((y-x_0)/r)$, we have for $x_0\in\R^d$ and $r>0,$
\begin{equation}\label{e:4.42}\begin{aligned}
&\sup_{y\in\R^d}|\L \varphi_{x_0,r}(y)|\\
\leq &\sup_{y\in\R^d}\left|\sum_{i=1}^m  \partial^2_{y_i} \varphi_{x_0,r}(y)\right|+\sup_{y\in\R^d}\int_{|z|\leq  r } \left|\varphi_{x_0,r}(y+z)-\varphi_{x_0,r}(y)-\nabla \varphi_{x_0,r} (y) \cdot z\right| \dfrac{\mathcal A(d, \alpha)}{|z|^{d+\alpha}}\,dz\\
&+\sup_{y\in\R^d}\int_{|z|> r } |\varphi_{x_0,r}(y+z)-\varphi_{x_0,r}(y)| \dfrac{\mathcal A(d, \alpha)}{|z|^{d+\alpha}}\,dz\\
\leq &r^{-2}\left( \| D^2 \varphi\|_\infty +\| D^2 \varphi\|_\infty \ \int_{|z|\leq  r } |z|^2\dfrac{\mathcal A(d, \alpha)}{|z|^{d+\alpha}}\,dz\right)
+2  \int_{|z|> r } \dfrac{\mathcal A(d, \alpha)}{|z|^{d+\alpha}}\,dz\\
\leq & c_1(r^{-2}+r^{-\alpha}),
\end{aligned}\end{equation}
where $c_1=c_1(d,\alpha)>0$ independent of $x_0\in\R^d$ and $r>0$ and $\| D^2 \varphi\|_\infty:=\sup_{y\in\R^d}\sum_{i, j=1}^d|\frac{\partial^2}{\partial y_iy_j} \varphi(y)|.$
Thus  by Ito's formula, for each $x_0\in\R^d$ and $R>0,$ open set $U\subset B(x_0, R/2)$ and $x\in B_U(x_0, R/64),$
$$\begin{aligned}&\P_x(X_{\tau_{B_U(x_0, R/32)}}\in U)
 \leq \E_x \varphi_{x_0 ,R}(X_{\tau_{B_U(x_0, R/32)}})\\
 &\leq \E_x\int_0^{\tau_{B_U(x_0, R/32)}} |\L \varphi_{x_0,R}(X_s)|\,ds
 \leq c_1(R^{-2}+R^{-\alpha})\E_x\tau_{B_U(x_0, R/32)}.
 \end{aligned}$$
 Hence   for each $R>0,$ ${\bf (ED)}^s _{\leq, R}$   holds.

 Let $\psi\in C_c^\infty(\R^d)$ be such that $0\leq \psi\leq 1,$  $\psi(y)=0$
if $|y|\geq 3/8$ and $\psi(y)=1$ if $|y|\leq 1/4.$
For each $x_0\in\R^d$ and $R>0,$ define $\psi_{x_0,R}(y):=\psi((y-x_0)/R).$
By an argument similar to that of \eqref{e:4.42}, there exists $c_2=c_2(d, \alpha)>0$ such that for any $x_0\in\R^d$ and $R>0,$
$$\sup_{y\in\R^d}|\L \psi_{x_0,R}(y)|\leq c_2(R^{-2}+R^{-\alpha}).$$
Then by this inequality and Ito's formula, for each $x_0\in\R^d$ and $R>0,$ any open set $U\subset B(x_0, R/2)$ and  $x\in U\cap A(x_0, 3R/8, R/2),$
$$ \P_x  \big( X_{\tau_{U\cap A(x_0, R/4, R/2)}} \in U \big)\leq \E_x \psi_{x_0 ,R}(X_{\tau_{U\cap A(x_0, R/4, R/2)}})
\leq c_2(R^{-2}+R^{-\alpha})\E_x\tau_{U\cap A(x_0, R/4, R/2)}.$$
Then by combining with Remark \ref{R:1.1}, ${\bf (\wh{ED})}^s _{\leq, R}$   holds.
Hence,  assumption (A2') holds.
\qed

\medskip

Since $X$ is rotationally symmetric, we denote by $p(t, x-y)=p(t, x, y)$ the transition density function of $X$ in $\R^d.$
Denote by $p^W(t, u-v)=p^W(t, u, v)$ and $p^Z(t, x-y)=p^Z(t, x, y)$ the transition density functions of $W$ in $\R^m$ and $Z$ in $\R^d$ respectively.
For each $x=(x_1, \cdots, x_d)\in\R^d,$  let $x^{(m)}:=(x_1, \cdots, x_m)$ and $x_{(d-m)}:=(x_{m+1}, \cdots, x_d).$

\begin{lem}\label{L:4.19}
There exists $C=C(d, \alpha)>0$ such that for any $x\in\R^d$ and $t>0,$
$$
p(t, x)\leq C\left( p^W(4t, x^{(m)}) \wedge p^Z(t, (0, x_{(d-m)}))\right)+Cp^Z(t, x).
$$
\end{lem}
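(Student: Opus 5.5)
The plan is to use the independence of $W$ and $Z$ to write the transition density of $X$ as a convolution, and then split that convolution into two regions. Write $x=(x^{(m)},x_{(d-m)})$ and let $\mu_t$ denote the law of $(W_t,0_{d-m})$. Then
$$
p(t,x)=\int_{\R^m} p^W(t,u)\,p^Z\big(t,x-(u,0_{d-m})\big)\,du .
$$
We split the integral according to whether $|u|\le |x^{(m)}|/2$ or $|u|>|x^{(m)}|/2$. Throughout we use the standard two-sided estimate
$$
p^Z(t,y)\asymp t^{-d/\alpha}\wedge \frac{t}{|y|^{d+\alpha}},
$$
which is radially decreasing in $|y|$, together with the explicit Gaussian form of $p^W$.

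On the region $|u|\le |x^{(m)}|/2$, the first $m$ coordinates of $x-(u,0)$ have modulus at least $|x^{(m)}|/2$. Hence $|x-(u,0)|\ge \tfrac12|x|$, since $|x-(u,0)|^2\ge |x^{(m)}|^2/4+|x_{(d-m)}|^2$. Radial monotonicity and the doubling-type property of the stable bound then give $p^Z(t,x-(u,0))\le c\,p^Z(t,x/2)\le c'\,p^Z(t,x)$. Integrating against $p^W(t,u)\,du$, which has total mass at most $1$, produces the term $C p^Z(t,x)$.

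On the region $|u|>|x^{(m)}|/2$, we use $p^W(t,u)\le c\,t^{-m/2}e^{-|x^{(m)}|^2/(8t)}\le c\,p^W(4t,x^{(m)})$, since $p^W(4t,v)=(16\pi t)^{-m/2}e^{-|v|^2/(16t)}$. After pulling this factor out, we bound $\int_{\R^m} p^Z(t,x-(u,0))\,du$, which is the density of the projection of $Z_t$ onto the last $d-m$ coordinates, evaluated at $x_{(d-m)}$. That projection is a rotationally symmetric $\alpha$-stable process in $\R^{d-m}$, so its density is comparable to $t^{-(d-m)/\alpha}\wedge t|x_{(d-m)}|^{-(d-m)-\alpha}$. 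This is not exactly $p^Z(t,(0,x_{(d-m)}))$, and repairing that mismatch is the main obstacle.

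To handle it, I would not integrate fully. Instead I would bound the contribution of this region in two ways and take the minimum. First, bounding $p^Z\le \sup p^Z\le ct^{-d/\alpha}$ and integrating $p^W$ gives at most $c\,p^Z$-type quantities times Gaussian mass. Second, I would split further according to whether $|u|\le |x_{(d-m)}|$ or not. When $|u|\le |x_{(d-m)}|$, we have $|x-(u,0)|\ge |x_{(d-m)}|$, so $p^Z(t,x-(u,0))\le p^Z(t,(0,x_{(d-m)}))$, and the $p^W$ integral is at most $1$. This yields the bound $\le p^Z(t,(0,x_{(d-m)}))$. It also gives a bound $\le c\,p^W(4t,x^{(m)})$ by pulling out the Gaussian factor and using $\int_{|u|\le |x_{(d-m)}|} p^Z\,du\lesssim |x_{(d-m)}|^m\,p^Z(t,(0,x_{(d-m)}))$, which needs checking; if it fails, this piece is absorbed into $C p^Z(t,x)$ when $|x^{(m)}|\lesssim |x_{(d-m)}|$. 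When $|u|>|x_{(d-m)}|$, we have both $|u|>|x^{(m)}|/2$ and $|u|>|x_{(d-m)}|$, so $|u|\gtrsim |x|$. The Gaussian tail $\int_{|u|>c|x|}p^W(t,u)\,du$ times $\sup p^Z$ is then dominated either by $C p^Z(t,x)$, using $e^{-c|x|^2/t}\,t^{-d/\alpha}\lesssim t/|x|^{d+\alpha}$ in the off-diagonal regime, or by $t^{-d/\alpha}\asymp p^Z(t,x)$ on the diagonal. Collecting the pieces gives the claimed inequality. The delicate point is making every piece land either in $C p^Z(t,x)$ or in the minimum term simultaneously, and I expect to resolve this by treating the cases $|x^{(m)}|\le |x_{(d-m)}|$ and $|x^{(m)}|>|x_{(d-m)}|$ separately. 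In the first case everything falls into $C p^Z(t,x)$, because there $p^Z(t,(0,x_{(d-m)}))\asymp p^Z(t,x)$.
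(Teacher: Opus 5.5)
Your diagnosis of the ``mismatch'' is exactly right, and it cannot be repaired: the inequality as stated is false, so your argument has to break somewhere. Take $0<t<1$ and $x=(v,0_{d-m})$ with $|v|=\sqrt t$. Restrict to $u$ in the $m$-ball $\{|u-v|<t^{1/\alpha}\}$, which has volume $c\,t^{m/\alpha}$. There $p^Z(t,x-(u,0))\ge c\,t^{-d/\alpha}$. Also, since $t^{1/\alpha}\le\sqrt t$, we have $|u|\le 2\sqrt t$ and hence $p^W(t,u)\ge c\,t^{-m/2}$. So $p(t,x)\ge c\,t^{-m/2-(d-m)/\alpha}$.

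The right-hand side, however, is at most $C\bigl(t^{-m/2}+t^{1-(d+\alpha)/2}\bigr)$, since $p^W(4t,\cdot)\le c\,t^{-m/2}$ and $p^Z(t,x)\le c\,t|x|^{-d-\alpha}$. The exponent $m/2+(d-m)/\alpha$ exceeds $m/2$ by $(d-m)/\alpha$, and exceeds $(d+\alpha)/2-1$ by $(2-\alpha)\bigl(\frac{d-m}{2\alpha}+\frac12\bigr)$. So the ratio blows up as $t\to0$.

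In your proposal the failing step is the last region $|u|>\max(|x^{(m)}|/2,|x_{(d-m)}|)$. The claim $e^{-c|x|^2/t}\,t^{-d/\alpha}\lesssim t|x|^{-d-\alpha}$ off the diagonal is false for $t<1$ and $t^{1/\alpha}\ll|x|\lesssim\sqrt t$. In that window the Gaussian factor is of order one while the stable tail is tiny. The example sits exactly in this window, and since $x_{(d-m)}=0$ your intermediate region is empty, so nothing else can absorb the contribution. Your first region and your case $|x^{(m)}|\le|x_{(d-m)}|$ are handled correctly.

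The paper's proof does not supply a missing idea here; it has the same defect. It splits at $|u-x^{(m)}|<|x^{(m)}|/2$, which is essentially your second region. It then bounds $I_1$ by $2^m p^W(4t,x^{(m)})$ using $\int_{\{|u-x^{(m)}|<|x^{(m)}|/2\}}p^Z(t,x-\tilde u)\,du\le 1$. That fails for precisely the reason you gave: an integral of a $d$-dimensional density over an $m$-dimensional slice is controlled only by the projected density $q(t,x_{(d-m)})\asymp t^{-(d-m)/\alpha}\wedge t|x_{(d-m)}|^{-(d-m)-\alpha}$.

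What your first steps (and the paper's splitting) genuinely prove, using also $|x-(u,0)|\ge|x_{(d-m)}|$ for every $u$, is
\[
p(t,x)\le C\Bigl(p^W(4t,x^{(m)})\,q(t,x_{(d-m)})\wedge p^Z\bigl(t,(0,x_{(d-m)})\bigr)\Bigr)+C\,p^Z(t,x),
\]
and that is the statement you should aim for. It still gives the boundedness of the Green function on $\{R/16\le|x|<R\}$ that is needed for (A3') in Lemma \ref{L:4.20}. If $|x_{(d-m)}|\ge R/32$, use $G^Z$. If $|x^{(m)}|\ge R/32$, use $\int_0^\infty t^{-m/2-(d-m)/\alpha}e^{-|x^{(m)}|^2/(ct)}\,dt<\infty$, which holds as $m\ge 3$. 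The pointwise bound \eqref{e:4.39} derived from the lemma, however, has to be redone.
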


\pf Let $k$ be an integer and $0_k$ be the zero vector in $\R^k.$ For each $u\in\R^m,$ we write $\tilde u:=(u, 0_{d-m})\in\R^d.$ Since $W$ and $Z$ are independent, we have for $t>0$ and $x\in\R^d,$
\begin{equation}\label{e:4.34}\begin{aligned}
p(t, x)&= \int_{\R^m} p^Z(t, x-\tilde u) p^W(t, u)\,du\\
&=  \int_{u\in\R^m: |u-x^{(m)}|<|x^{(m)}|/2} p^Z(t, x-\tilde u) p^W(t, u)\,du\\
&\quad +\int_{u\in\R^m: |u-x^{(m)}|\geq |x^{(m)}|/2} p^Z(t, x-\tilde u) p^W(t, u)\,du\\
&:=I_1(x)+I_2(x).
\end{aligned}\end{equation}
Recall that $p^W(t, u)=(2\pi t)^{-m/2} e^{-|u|^2/2t}$ for $t>0$ and $u\in \R^m.$
Then we have
$$\begin{aligned}
I_1(x)&= \int_{u\in\R^m: |u-x^{(m)}|<|x^{(m)}|/2} p^Z(t, x-\tilde u) (2\pi t)^{-m/2} e^{-|u|^2/2t}\,du\\
&\leq (2\pi t)^{-m/2} e^{-|x^{(m)}|^2/8t}\int_{u\in\R^m: |u-x^{(m)}|<|x^{(m)}|/2} p^Z(t, x-\tilde u)\,du\\
&\leq (2\pi t)^{-m/2}  e^{-|x^{(m)}|^2/8t}=2^mp^W(4t, x^{(m)}).
\end{aligned}$$
On the other hand, it is known that there exists $c_1>1$ such that $c_1^{-1}(t^{-d/\alpha}\wedge |y|^{-(d+\alpha)})\leq p^Z(t, y)\leq c_1(t^{-d/\alpha}\wedge |y|^{-(d+\alpha)})$ for $t>0$ and $y\in\R^d,$ we have
$$\begin{aligned}
I_1(x)&= \int_{u\in\R^m: |u-x^{(m)}|<|x^{(m)}|/2} p^Z(t, x-\tilde u) p^W(t, u)\,du\\
&\leq c_1\int_{u\in\R^m: |u-x^{(m)}|<|x^{(m)}|/2} (t^{-d/\alpha}\wedge |x-\tilde u|^{-(d+\alpha)}) p^W(t, u)\,du\\
&\leq c_1\left(t^{-d/\alpha}\wedge |(0_m, x_{(d-m)})|^{-(d+\alpha)}\right) \int_{u\in\R^m: |u-x^{(m)}|<|x^{(m)}|/2} p^W(t, u)\,du\\
&\leq c_1\left(t^{-d/\alpha}\wedge |(0_m, x_{(d-m)})|^{-(d+\alpha)}\right)\leq c_1^2p^Z(t, (0_m, x_{(d-m)})) .
\end{aligned}$$
Hence
\begin{equation}\label{e:4.35}
I_1(x)\leq (2^m+c_1^2) \left( p^W(4t, x^{(m)}) \wedge p^Z(t, (0_m, x_{(n-m)}))\right).
\end{equation}
For the second item,
\begin{equation}\label{e:4.36}\begin{aligned}
I_2(x)&\leq c_1\int_{u\in\R^m: |u-x^{(m)}|\geq |x^{(m)}|/2} \left(t^{-d/\alpha} \wedge t|x-\tilde u|^{-(d+\alpha)}\right) p^W(t, u)\,du\\
&\leq c_12^{d+\alpha}\left(t^{-d/\alpha} \wedge t|x|^{-(d+\alpha)}\right)\int_{u\in\R^m: |u-x^{(m)}|\geq |x^{(m)}|/2} p^W(t, u)\,du\\
&\leq c_12^{d+\alpha}\left(t^{-d/\alpha} \wedge t|x|^{-(d+\alpha)}\right)\leq c_1^22^{d+\alpha}p^Z(t, x),
\end{aligned}\end{equation}
where the second inequality holds as  $|x-\tilde u|\geq |x|/2$ for $u\in\R^m$ with $|u-x^{(m)}|\geq |x^{(m)}|/2.$
Hence, by \eqref{e:4.34}, \eqref{e:4.35} and \eqref{e:4.36}, the proof is complete.
\qed

\medskip

Denote by $G(x-y):=G(x, y)=\int_0^\infty p(t, x-y)\,dt$ the Green  function of $X$ in $\R^d.$
Denote by $G^W(\cdot)$ and $G^Z(\cdot)$ the Green function of $W$ in $\R^m$ and the Green function of $Z$ in $\R^d$, respectively.

\begin{lem}\label{L:4.20}
There exists $C>0$ such that for $x\in\R^d\setminus \{0\},$
\begin{equation}\label{e:4.39}
G(x)\leq C\left( |x^{(m)}|^{2-m}\wedge | x_{(n-m)}|^{\alpha-d}\right)+C |x|^{\alpha-d}.
\end{equation}
Consequently, assumption (A3') holds for $X$.
\end{lem}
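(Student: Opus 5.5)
The plan is to integrate the bound of Lemma \ref{L:4.19} in time: $G(x)=\int_0^\infty p(t,x)\,dt$, so it suffices to bound the integrals of the two terms $p^W(4t,x^{(m)})\wedge p^Z(t,(0_m,x_{(d-m)}))$ and $p^Z(t,x)$ separately.

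For the second term, $\int_0^\infty p^Z(t,x)\,dt=G^Z(x)=c(d,\alpha)|x|^{\alpha-d}$, which is finite because $d>\alpha$.

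For the first term, use $\int(f\wedge g)\le \int f$ and $\int(f\wedge g)\le\int g$:
\begin{equation*}
\int_0^\infty \big(p^W(4t,x^{(m)})\wedge p^Z(t,(0_m,x_{(d-m)}))\big)\,dt\le \Big(\tfrac14 G^W(x^{(m)})\Big)\wedge G^Z\big((0_m,x_{(d-m)})\big).
\end{equation*}
Here $G^W(u)=c\,|u|^{2-m}$ since $m\ge3$, and $G^Z(y)=c|y|^{\alpha-d}$. This gives \eqref{e:4.39}, where the index $n$ in the statement is to be read as $d$. If $x^{(m)}=0$ or $x_{(d-m)}=0$, the corresponding quantity is $+\infty$, so the minimum simply reduces to the other term and the bound still holds.

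For (A3'), first note that $X$ is a symmetric Lévy process, so (A1) holds with $m_B$ equal to Lebesgue measure, and $G^B(x,y)\le G(x-y)$ by domination of the killed process. Translation invariance means it suffices to take $x_0=0$ and bound $G(x-y)$ for $x\in B(0,R/16)$ and $y\in A(0,R/8,R/2)$. Then $|x-y|\ge R/16$, so the term $|x-y|^{\alpha-d}$ is bounded by $(R/16)^{\alpha-d}$. For the minimum term, write $w=x-y$. Since $|w|^2=|w^{(m)}|^2+|w_{(d-m)}|^2\ge (R/16)^2$, at least one component satisfies $|w^{(m)}|\ge R/(16\sqrt2)$ or $|w_{(d-m)}|\ge R/(16\sqrt2)$. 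In either case the minimum is at most $c\big((R/(16\sqrt2))^{2-m}+(R/(16\sqrt2))^{\alpha-d}\big)$. Hence the supremum in \eqref{e:G} is finite and uniform in $x_0$.

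The main subtlety is the treatment of the minimum: the bound must not be split into a sum of $p^W$ and $p^Z$ before integrating, because $\int p^Z(t,(0,x_{(d-m)}))\,dt$ blows up as $x_{(d-m)}\to0$. Keeping the minimum inside the time integral and applying the elementary inequality $\int(f\wedge g)\le(\int f)\wedge(\int g)$ is exactly what allows the degenerate directions to be handled by the Brownian component.
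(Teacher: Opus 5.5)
Your proposal is correct and follows the paper's proof: you integrate Lemma~\ref{L:4.19} in $t$ while keeping the minimum inside the integral, which bounds it by $G^W(x^{(m)})\wedge G^Z((0,x_{(d-m)}))$. For (A3') you bound $G_{B(x_0,R)}(x,y)$ by $G(x-y)$ with $|x-y|\ge R/16$, and use that one of the two components of $x-y$ has length at least a fixed multiple of $R$ (the paper takes $R/32$, you take $R/(16\sqrt2)$); your handling of the factor $\tfrac14$ and of the index $n$ as $d$ is also right.
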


\pf By Lemma \ref{L:4.19}, there exists $c_1=c_1(d, \alpha)>0$ such that for $x\in\R^d\setminus\{0\},$
$$\begin{aligned}
G(x)&=\int_0^\infty p(t, x)\,dt\\
&\leq c_1\left(\int_0^\infty p^W(4t, x^{(m)})\,dt\wedge \int_0^\infty p^Z(t, (0, x_{(n-m)}))\,dt\right)+ c_1\int_0^\infty p^Z(t, x)\,dt\\
&= c_1\left( G^W(x^{(m)})\wedge G^Z((0, x_{(n-m)})) \right)+c_1G^Z(x)\\
&\leq c_2\left(|x^{(m)}|^{2-m}\wedge | x_{(n-m)}|^{\alpha-d}\right)+c_2 |x|^{\alpha-d},
\end{aligned}$$
where $|\cdot|$ is the length of the vector and the last inequality is due to that $G^W(x^{(m)})\leq c|x^{(m)}|^{2-m}$ and $G^Z((0, x_{(n-m)}))\leq c| x_{(n-m)}|^{\alpha-d}$ for some $c>0.$ Hence \eqref{e:4.39} holds.

In the following, we prove assumption (A3') holds.
For each ball $B,$ denote by $G_B(x, y)$ the Green function of the subprocess of $X$ in $B$ with respect to the Lebesgue measure.
By \eqref{e:4.39}, for each $R>0,$
 $$\begin{aligned}
 &\sup_{x_0\in\R^d}\sup_{x\in B(x_0, R/16)}\sup_{y\in B(x_0, R/2)\setminus B(x_0, R/8)}G_{B(x_0, R)}(x, y)\\
 \leq &\sup_{R/16\leq |u|<R}G(u)\\
 \leq &c_2\sup_{R/16\leq |u|<R}\left( \left(|u^{(m)}|^{2-m}\wedge |u_{(n-m)}|^{\alpha-d}\right) + |u|^{\alpha-d}\right).
 \end{aligned}$$
Note that for $R/16\leq |u|<R,$  one has either $|u^{(m)}|>R/32$ or $|u_{(n-m)}|>R/32.$
Hence,
$$\sup_{x_0\in\R^d}\sup_{x\in B(x_0, R/16)}\sup_{y\in B(x_0, R/2)\setminus B(x_0, R/8)}G_{B(x_0, R)}(x, y)
\leq c_2(R/32)^{2-m}+c_2(R/32)^{\alpha-d}+c_2(R/16)^{\alpha-d}.
$$
Hence assumption (A3') holds.
\qed

\medskip

\begin{proof}[Proof of  Proposition \ref{C7}.]
Since $X$ is symmetric and has transition density function $p(t, x, y)$ with respect to the Lebesgue measure,  then for each ball $B$,   the $\alpha$-resolvent measure $G^{\alpha, B}(x, \cdot)$ of the subprocess $X^B$ is absolutely continuous with respect to the Lebesgue measure.
By \cite[Theorem 2.1]{HS1} (which is due to \cite{F,Ka}), Hunt's hypothesis holds for symmetric L\'evy process. Thus  assumption (A1) holds for $X$.
By Lemmas \ref{L:4.18} and \ref{L:4.20}, assumptions (A2') and  (A3') hold.
It is easy to see that the jump density function $J(x, y)=\mathcal{A}(d, \alpha)|x-y|^{-(d+\alpha)}$ with respect to the Lebesgue measure satisfies assumption (A4').
By Lemma \ref{L:4.20},  there exist $c_k, k=1, 2>0$ such that for $u\in\R^d$,
$$G(u)\leq c_1\left( |u^{(m)}|^{2-m}\wedge | u_{(n-m)}|^{\alpha-d}\right)+c_1 |u|^{\alpha-d}\leq c_2 (|u|^{2-m}+|u|^{\alpha-d}),$$
where the last inequality holds as  one has either
$|u^{(m)}|>|u|/2$ or $| u_{(n-m)}|>|u|/2$.
For each ball $B,$ denote by $\tau_B$ the first exit time of $X$ from $B.$
Hence for each $r>0,$
$$\begin{aligned}
& \sup_{x\in\R^d}\E_x \tau_{B(x, r)}\leq \sup_{x\in\R^d}\int_{B(x, r)} G(x-y)\,dy\\
& \leq \int_{u\in\R^d: |u|\leq 2r} G(u)\,du\leq c_2\int_{u\in\R^d: |u|\leq 2r} (|u|^{\alpha-d}+|u|^{2-m})\,du\leq c_3 (r^\alpha+r^{2+d-m})<\infty.
\end{aligned}$$
Then assumption (A5) holds.
Consequently, assumptions (A1), (A2')-(A4') and (A5)
are satisfied. Hence,   Proposition \ref{C7} holds by Theorem \ref{T0}.
 \end{proof}

\end{document}